\documentclass[12pt]{amsart}

%%%%%%%%%%%%%%%%
\usepackage[letterpaper,margin=.6in]{geometry}
\usepackage[colorlinks,linkcolor=blue,citecolor=black!75!red]{hyperref}
%             pdfproducer={pdfLaTeX},
%             pdfpagemode=None,
%             bookmarksopen=true
%             bookmarksnumbered=true]
%\usepackage[matrix,arrow,ps,color,line,curve,frame,all]{xy}

%\usepackage{silence}% Filter out unwanted warnings and error messages
%\WarningFilter{pdftex}{destination with the same}% Remove all warnings issued by pdfTeX
% (as pdfTeX Warning: 'destination with the same identifier (name{lemma.4.11}) has been already used, 
%  duplicate ignored

\usepackage{url}
\usepackage{color}
\usepackage[all]{xypic}

%\CompileMatrices

\usepackage{latexsym}
\usepackage{amssymb}
\usepackage{amsfonts}
\usepackage{amscd}
\usepackage{amsmath,amsthm}
\usepackage{verbatim}

\usepackage{tikz}
\usetikzlibrary{matrix,arrows,decorations.pathmorphing,fit,cd}
\tikzset{myboxgroup/.style={draw, densely dotted}} % style for the boxed groups

\usepackage{cleveref}
%\usepackage{toc}

%\usepackage{makeidx}

%----------------------------------------------------------------
%  sectionlemma.tex
%
% theoremstyle plain

\newtheorem{lemma}{Lemma}[section]
\newtheorem{proposition}[lemma]{Proposition}
\newtheorem{theorem}[lemma]{Theorem}
\newtheorem{corollary}[lemma]{Corollary}

{
%\theorembodyfont{\normalshape}

}

\theoremstyle{definition}

\theoremstyle{remark}

%%% Alex's fix for Hyperref+Cleveref %%%

\makeatletter
\let\xx@thm\@thm
\AtBeginDocument{\let\@thm\xx@thm}
\makeatother

%%% Ref styles

\crefname{section}{Section}{Sections}
\crefformat{section}{#2section~#1#3}
\Crefformat{section}{#2Section~#1#3}

%\crefname{subsection}{\S}{\S\S}
\crefformat{subsection}{#2\S#1#3}
\Crefformat{subsection}{#2\S#1#3}
\crefrangeformat{subsection}{\S\S#3#1#4--#5#2#6}
\Crefrangeformat{subsection}{\S\S#3#1#4--#5#2#6}
\crefmultiformat{subsection}{\S\S#2#1#3}{ and~#2#1#3}{, #2#1#3}{ and~#2#1#3}
\Crefmultiformat{subsection}{\S\S#2#1#3}{ and~#2#1#3}{, #2#1#3}{ and~#2#1#3}
\crefrangemultiformat{subsection}{\S\S#3#1#4--#5#2#6}{ and~#3#1#4--#5#2#6}{, #3#1#4--#5#2#6}{ and~#3#1#4--#5#2#6}
\Crefrangemultiformat{subsection}{\S\S#3#1#4--#5#2#6}{ and~#3#1#4--#5#2#6}{, #3#1#4--#5#2#6}{ and~#3#1#4--#5#2#6}

% % \crefname{subsection}{}{Subsections}
% % \crefformat{subsection}{#2\S~#1#3}
% % \Crefformat{subsection}{#2\S~#1#3}
% %

\crefname{definition}{Definition}{Definitions}
\crefformat{definition}{#2Definition~#1#3}
\Crefformat{definition}{#2Definition~#1#3}

\crefname{definitionnodiamond}{Definition}{Definitions}
\crefformat{definitionnodiamond}{#2Definition~#1#3}
\Crefformat{definitionnodiamond}{#2Definition~#1#3}

\crefname{example}{Example}{Examples}
\crefformat{example}{#2Example~#1#3}
\Crefformat{example}{#2Example~#1#3}

\crefname{examplenodiamond}{Example}{Examples}
\crefformat{examplenodiamond}{#2Example~#1#3}
\Crefformat{examplenodiamond}{#2Example~#1#3}

\crefname{remark}{Remark}{Remarks}
\crefformat{remark}{#2Remark~#1#3}
\Crefformat{remark}{#2Remark~#1#3}

\crefname{remarknodiamond}{Remark}{Remarks}
\crefformat{remarknodiamond}{#2Remark~#1#3}
\Crefformat{remarknodiamond}{#2Remark~#1#3}

\crefname{convention}{Convention}{Conventions}
\crefformat{convention}{#2Convention~#1#3}
\Crefformat{convention}{#2Convention~#1#3}

\crefname{notation}{Notation}{Notations}
\crefformat{notation}{#2Notation~#1#3}
\Crefformat{notation}{#2Notation~#1#3}

\crefname{notationnodiamond}{Notation}{Notations}
\crefformat{notationnodiamond}{#2Notation~#1#3}
\Crefformat{notationnodiamond}{#2Notation~#1#3}

\crefname{lemma}{Lemma}{Lemmas}
\crefformat{lemma}{#2Lemma~#1#3}
\Crefformat{lemma}{#2Lemma~#1#3}

\crefname{proposition}{Proposition}{Propositions}
\crefformat{proposition}{#2Proposition~#1#3}
\Crefformat{proposition}{#2Proposition~#1#3}

\crefname{corollary}{Corollary}{Corollaries}
\crefformat{corollary}{#2Corollary~#1#3}
\Crefformat{corollary}{#2Corollary~#1#3}

\crefname{theorem}{Theorem}{Theorems}
\crefformat{theorem}{#2Theorem~#1#3}
\Crefformat{theorem}{#2Theorem~#1#3}

\crefname{assumption}{Assumption}{Assumptions}
\crefformat{assumption}{#2Assumption~#1#3}
\Crefformat{assumption}{#2Assumption~#1#3}

\crefname{enumi}{}{}
\crefformat{enumi}{(#2#1#3)}
\Crefformat{enumi}{(#2#1#3)}

\crefname{equation}{}{}
\crefformat{equation}{(#2#1#3)}
\Crefformat{equation}{(#2#1#3)}

\crefname{align}{}{}
\crefformat{align}{(#2#1#3)}
\Crefformat{align}{(#2#1#3)}

\crefname{proofstep}{Step}{Steps}
\crefformat{proofstep}{#2Step~#1#3}
\Crefformat{proofstep}{#2Step~#1#3}

\crefname{table}{Table}{Tables}
\crefformat{table}{#2Table~#1#3}
\Crefformat{table}{#2Table~#1#3}

%%% misc %%%

%----------------Numbering equations---------------------

\numberwithin{equation}{section}

%\newenvironment{pf}{\noindent{\bf Proof.}}{\hfill $\square$\medskip}

%---------------  My Definitions  ------------------

%
%  Blackboard Bold
%

\def\CC{{\mathbb C}}

\def\FF{{\mathbb F}} 

\def\HH{{\mathbb H}}

\def\PP{{\mathbb P}}

\def\RR{{\mathbb R}}

\def\ZZ{{\mathbb Z}}

\newcommand{\bbZ}{\mathbb{Z}}
\newcommand{\bbC}{\mathbb{C}}

\def\bC{{\mathbb C}}
\def\bH{{\mathbb H}}
\def\bQ{{\mathbb Q}}
\def\bR{{\mathbb R}}
\def\bZ{{\mathbb Z}}

%
% Overline
%

%\def\Col{{\bar C}}

\def\0ol{{\bar 0}}
\def\1ol{{\bar 1}}
\def\2ol{{\bar 2}}
\def\ol2{{\bar 2}}
\def\3ol{{\bar 3}}
\def\4ol{{\bar 4}}
\def\5ol{{\bar 5}}
\def\6ol{{\bar 6}}
\def\7ol{{\bar 7}}
\def\8ol{{\bar 8}}
\def\9ol{{\bar 9}}

\def\bold0{{\bf 0}}
\def\bold1{{\bf 1}}
\def\bold2{{\bf 2}} 
\def\bold3{{\bf  3}}
\def\bold4{{\bf 4}}
\def\bold5{{\bf 5}}
\def\bold6{{\bf 6}}
\def\bold7{{\bf 7}}
\def\bold8{{\bf 8}}
\def\bold9{{\bf 9}}

%
% Underline
%

%
%
%

\def\P2Skly{\PP^2_{Skly}}

%
% Greek
%

\def\a{\alpha}

\def\l{\lambda}

\def\s{\sigma}

\def\G{\Gamma}
\def\L{\Lambda}

%
% frak
%

%
%  sans serif
%

%
%  scr.tex
%

\def\cal{\mathcal}

\def\cL{{\cal L}}

\def\cO{{\cal O}}

%
% misc
%

\def\End{\operatorname {End}}

\def\Aut{\operatorname{Aut}}

\def\id{\operatorname{id}}

\def\th{{\rm th}}

% for compatibility

\def\dirlim{\mathop{\vtop{\baselineskip -100pt\lineskip -1pt\lineskiplimit 0pt
\setbox0\hbox{lim}\copy0\hbox to \wd0{\rightarrowfill}}}\limits}
\def\invlim{\mathop{\vtop{\baselineskip -100pt\lineskip -1pt\lineskiplimit 0pt
\setbox0\hbox{lim}\copy0\hbox to \wd0{\leftarrowfill}}}\limits}

\def\I11{{1 \kern -0.8pt \! \mbox{l}}}
\def\mumu{{\mu\kern-4.2pt\mu}}
\def\bfmu{{\mu\kern-4.2pt\mu}}
\def\2slash{\backslash \! \backslash}

				%% define bars for norm 

%%% new words %%%

\def\GL{\mathrm{GL}}

\def\SL{\mathrm{SL}}

\pagenumbering{arabic}

\setcounter{tocdepth}{2}
%\addtocontents{toc}{~\hfill\textbf{Page}\par}
\makeatletter
\def\l@subsection{\@tocline{2}{0pt}{2.75pc}{5pc}{}}
\makeatother

\makeatletter
\@namedef{subjclassname@2020}{\textup{2020} Mathematics Subject Classification}
\makeatother

%\makeindex

%%%%%%%%%%%%%%%%%%%%%%%%%%%%%%%%%%%%%%%%

%%%%%%%%%%%%%%%%%%%%%%%%%%%%%%%%%%%%%%%%

\begin{document}

\title[Modular properties of elliptic algebras]{Modular properties of elliptic algebras}

\author{Alex Chirvasitu, Ryo Kanda, and S. Paul Smith}

\address[Alex Chirvasitu]{Department of Mathematics, University at
  Buffalo, Buffalo, NY 14260-2900, USA.}
\email{achirvas@buffalo.edu}

\address[Ryo Kanda]{Department of Mathematics, Graduate School of Science, Osaka City University, 3-3-138, Sugimoto, Sumiyoshi, Osaka, 558-8585, Japan.}
\email{ryo.kanda.math@gmail.com}

\address[S. Paul Smith]{Department of Mathematics, Box 354350,
  University of Washington, Seattle, WA 98195, USA.}
\email{smith@math.washington.edu}
 
\keywords{Elliptic algebras; elliptic curves;  theta functions;  modular group; finite Heisenberg group; automorphisms of the Heisenberg group; modular functions; Yang-Baxter equation}

\subjclass[2020]{16S38 (Primary), 16S37, 16W50, 14H52 (Secondary)}

\begin{abstract}
Fix a pair of relatively prime integers $n>k\ge 1$, and a point $(\eta\,|\,\tau)\in\mathbb{C}\times\mathbb{H}$, where $\mathbb{H}$ denotes the upper-half complex plane, and let ${{a\;\,b}\choose{c\,\;d}}\in\mathrm{SL}(2,\mathbb{Z})$. We show that Feigin and Odesskii's elliptic algebras $Q_{n,k}(\eta\,|\,\tau)$ have the property $Q_{n,k}\big(\frac{\eta}{c\tau+d}\,\big\vert\,\frac{a\tau+b}{c\tau+d}\big)\cong Q_{n,k}(\eta\,|\,\tau)$. As a consequence, given a pair $(E,\xi)$ consisting of a complex elliptic curve $E$ and a point $\xi\in E$, one may unambiguously define $Q_{n,k}(E,\xi):=Q_{n,k}(\eta\,|\,\tau)$ where $\tau\in\mathbb{H}$ is any point such that $\mathbb{C}/\mathbb{Z}+\mathbb{Z}\tau\cong E$ and $\eta\in\mathbb{C}$ is any point whose image in $E$ is $\xi$. This justifies Feigin and Odesskii's notation $Q_{n,k}(E,\xi)$ for their algebras.
\end{abstract}

\maketitle

\tableofcontents

%%%%%%%%%%%%%%%%%%%%%%%%%%%%%%%%%%%%%%%%%%%%%%%%%%%%%%%%%%%%%%%%%%%%%%%%%%%%%%%%
\section{Introduction}
\label{sect.defn.Qnk} 
%%%%%%%%%%%%%%%%%%%%%%%%%%%%%%%%%%%%%%%%%%%%%%%%%%%%%%%%%%%%%%%%%%%

We will use the notation $e(z):=e^{2\pi i z}$.

Always, $\tau$ denotes a point in the upper-half complex plane, $\HH:=\{x+iy \, | \, x,y \in \RR, \; y>0\}$.

\subsection{The main result}
Let 
\begin{equation}
\label{defn.vartheta} 
\vartheta(z \, | \, \tau)  \; := \; \sum_{n\in \ZZ} e\big(nz + \tfrac{1}{2} n^2\tau\big).
\end{equation}
In 1828, C.G.J. Jacobi \cite{jac-1828} proved the remarkable identity
\begin{equation}
\label{defn.Jac.id}
\vartheta \bigg( \frac{z}{\tau} \, \Big\vert\,- \frac{1}{\tau} \bigg) \; =\; \sqrt{-i\tau }\, e\bigg(\frac{z^{2}}{2\tau}\bigg)\vartheta(z \, | \, \tau),
\end{equation} 
where $\sqrt{-i\tau}$ is the square root of $-i\tau$ having non-negative real part.\footnote{Jacobi did not use this notation, nor did he provide a proof in \cite{jac-1828}. Jacobi's identity is equivalent to a similar identity for the function 
$\vartheta_1(z \, | \, \tau)$ that appears in Chapter XXI of Whittaker-Watson \cite[pp.~475--476]{WW-4th-ed}.
On the other hand, \Cref{defn.Jac.id} is equivalent to a similar identity for Whittaker and Watson's function 
$\vartheta_3(z \, | \, \tau)$ since $\vartheta(z \, | \, \tau)=\vartheta_3(\pi z \, | \tau)$ and, as Whittaker and Watson
indicate, their identities for $\vartheta_1(z \, | \, \tau)$ and $\vartheta_3(z \, | \, \tau)$ are equivalent.}
Jacobi's identity is a special case of the following functional equation: if 
\begin{equation}
\label{eq:SL2Z.elt}
\begin{pmatrix}
  a    &   b \\
   c   &  d
\end{pmatrix}  
\;  \in \; \SL(2,\ZZ)
 \end{equation}
 is such that $ab$ and $cd$ are even, then there is an $8^{\rm th}$ root of unity $\zeta$ such that
\begin{equation}
\label{eq.modular.identity}
\vartheta \bigg( \frac{z}{c\tau+d} \;  \bigg\vert \;  \frac{a\tau+b}{c\tau+d}  \bigg)  
\;=\; \zeta \sqrt{c\tau+d}  \, \, e\bigg(\frac{cz^2}{2(c\tau+d)} \bigg) \, \vartheta(z \, | \, \tau).
 \end{equation}
The precise value of $\zeta$, which is not important in this paper, can be found at
\cite[Thm.~7.1, p.~32]{Mum07}, for example.

In 1989, Feigin and Odesskii defined a remarkable family of graded associative $\CC$-algebras which we denote here by 
$Q_{n,k}(\eta \, | \, \tau)$. These are the elliptic algebras of the title. 
They depend on a  pair of relatively prime integers $n>k \ge 1$ and a point 
$(\eta \, | \, \tau) \in \CC \times \HH$. We define $Q_{n,k}(\eta \, | \, \tau)$ in \Cref{ssect.defn}.

The main result in this paper is the following.

\begin{theorem}[\Cref{th:qequiv}]
\label{thm.main}
If (\ref{eq:SL2Z.elt}) holds, then there is an isomorphism of graded $\CC$-algebras
\begin{equation}
\label{eq:main.isom}
Q_{n,k} \bigg( \frac{\eta}{c\tau+d} \; \bigg\vert \;  \frac{a\tau+b}{c\tau+d}  \bigg) \; \cong \; Q_{n,k}(\eta \, | \, \tau).
\end{equation}
\end{theorem}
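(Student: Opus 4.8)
The plan is to work from the explicit presentation of $Q_{n,k}(\eta\mid\tau)$ by $n$ degree-one generators $x_0,\dots,x_{n-1}$ indexed by $\ZZ/n\ZZ$, subject to quadratic relations whose structure constants are ratios of order-$n$ theta functions $\theta_\alpha(z\mid\tau)$ (themselves assembled from $\vartheta$), and to transport the modular transformation law through these ratios. The assignment $\gamma\cdot(\eta\mid\tau)=\big(\tfrac{\eta}{c\tau+d}\mid\tfrac{a\tau+b}{c\tau+d}\big)$ is a genuine left action of $\SL(2,\ZZ)$ on $\CC\times\HH$, because the automorphy factor $c\tau+d$ satisfies the usual cocycle identity; consequently, if for each generator $\gamma$ one constructs an isomorphism $\Phi_\gamma\colon Q_{n,k}(\gamma\cdot(\eta\mid\tau))\to Q_{n,k}(\eta\mid\tau)$ uniformly in the base point, then $\Phi_{\gamma_2}\circ\Phi_{\gamma_1}$, with $\Phi_{\gamma_1}$ taken at base point $\gamma_2\cdot(\eta\mid\tau)$, realizes $\Phi_{\gamma_1\gamma_2}$. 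Since $\SL(2,\ZZ)$ is generated by $S=\left(\begin{smallmatrix}0&-1\\1&0\end{smallmatrix}\right)$ and $T=\left(\begin{smallmatrix}1&1\\0&1\end{smallmatrix}\right)$, it suffices to produce \eqref{eq:main.isom} for $\gamma\in\{S,T\}$.

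First I would dispose of $T$, for which $\tau\mapsto\tau+1$ and $\eta$ is unchanged. Adding $1$ to $\tau$ multiplies the $m$-th summand of $\theta_\alpha$ by a quadratic phase, so $\theta_\alpha(z\mid\tau+1)=\varepsilon_\alpha\,\theta_{\pi(\alpha)}(z\mid\tau)$ for a root of unity $\varepsilon_\alpha$ depending only on $\alpha$ and an affine bijection $\pi$ of $\ZZ/n\ZZ$ (the half-period shift that can occur when $n$ is odd). This is the diagonal element of the Weil representation. Substituting into the relations, the phases $\varepsilon_\alpha$ cancel between numerator and denominator — here one uses that each defining relation is balanced, i.e.\ the multisets of theta-indices upstairs and downstairs carry matching quadratic weights — while $\pi$ is absorbed by relabeling the $x_i$. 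The resulting degree-preserving map is invertible, giving the isomorphism for $T$ essentially formally.

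The substance is the case $\gamma=S$, where $\tau\mapsto-1/\tau$ and $\eta\mapsto\eta/\tau$, governed by the higher-level analogue of Jacobi's identity \eqref{defn.Jac.id}. I would establish a transformation law of the form
\[
\theta_\alpha\!\Big(\tfrac{z}{\tau}\,\Big|\,-\tfrac1\tau\Big)
\;=\; J(z,\tau)\sum_{\beta\in\ZZ/n\ZZ} e\!\big(\tfrac{\alpha\beta}{n}\big)\,\theta_\beta(z\mid\tau),
\]
in which the automorphy factor $J(z,\tau)$ — a product of $\sqrt{-i\tau}$, a root of unity, and a Gaussian $e(\cdots z^2\cdots)$ as in \eqref{eq.modular.identity} — is independent of $\alpha$, and the coefficient matrix $\big(e(\alpha\beta/n)\big)$ is the finite Fourier transform on $\ZZ/n\ZZ$. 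This Fourier matrix is exactly the operator implementing $S$ in the Weil representation of the finite Heisenberg group $H_n$ on the space of level-$n$ theta functions.

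With this law in hand I would define $\Phi_S$ by $x_\alpha\mapsto\sum_\beta e(\alpha\beta/n)\,x_\beta$, up to normalization, and verify two things. First, the index-independent factors $J(z,\tau)$ cancel between the numerator and denominator of every structure constant, because each relation is a theta-ratio in which numerator and denominator carry the same total power of $\sqrt{-i\tau}$, the same root of unity, and matching Gaussian exponents; this is what makes $\Phi_S$ an honest isomorphism rather than a merely projective one. Second, the Fourier-transformed generators must satisfy the transported relations, which is the assertion that the finite Fourier transform intertwines the two Heisenberg actions carried by $Q_{n,k}(\eta/\tau\mid-1/\tau)$ and $Q_{n,k}(\eta\mid\tau)$, with $S$ acting as the automorphism of $H_n$ interchanging its two cyclic generators. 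The main obstacle is precisely this bookkeeping: checking that the Gaussian and root-of-unity prefactors cancel exactly, and that the cross-terms generated by the Fourier sum reorganize — via the functional equations for the $\theta_\alpha$ and the Heisenberg commutation relations — into the correct transported structure constants. Since both the Fourier matrix for $S$ and the affine relabeling for $T$ are invertible and preserve degree, $\Phi_S$ and $\Phi_T$ are isomorphisms of graded $\CC$-algebras, and the reduction to generators above completes the proof.
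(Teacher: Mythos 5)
Your high-level strategy is sound and even matches the paper in two respects: the reduction to the generators of $\SL(2,\ZZ)$ (the paper does the same, for its key lemma, with the generators $X$ and $Y$), and the candidate isomorphism itself --- your Fourier-transform map $\Phi_S$ is essentially the paper's operator $\psi(M')$, which implements an automorphism of the (extended) Heisenberg group $\widetilde{H}_n$ on $V$. But there is a genuine gap at the step you dismiss as "bookkeeping," and it is precisely where the paper's real work lies. Under $\tau\mapsto -1/\tau$, the level-$n$ basis $\theta_\alpha$ transforms by the Weil representation, i.e.\ each $\theta_\alpha(z/\tau\,|\,{-1/\tau})$ is a Gaussian factor times an $n$-term Fourier \emph{sum} of the $\theta_\beta(z\,|\,\tau)$, exactly as you write. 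Now look at what this does to the structure constants in \Cref{the.relns}: each coefficient of $R_{n,k}$ is a ratio whose numerator and denominator are products of $n+1$ theta factors. Substituting an $n$-term sum for every factor turns numerator and denominator into sums of $n^{n+1}$ products, and a ratio of such sums admits no term-by-term reorganization; the "cross-terms" do not recombine into transported structure constants by any direct manipulation of the functional equations. Asserting that they do is not a verification one can defer --- it is the theorem.

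The missing idea, which is the paper's key move, is to first change basis: rewrite $R_{n,k}$ (up to a harmless scalar and the flip $P$) as $T_k(z,\eta\,|\,\tau)=\sum_{(u,v)}w_{(u,v)}(-nz)\,I_{-k'u,v}\otimes I_{-k'u,v}^{-1}$, where the coefficients $w_{(u,v)}$ are \emph{normalized theta functions with characteristics} and the operators $I_{a,b}$ come from the Heisenberg group. Unlike the $\theta_\alpha$, each $w_{(u,v)}$ transforms under $M\in\SL(2,\ZZ)$ into a \emph{single} function $w_{(u,v)M}$ times a nowhere-vanishing factor independent of $(u,v)$ (\Cref{th:wuvs}, proved via factors of automorphy and zero sets); no Fourier sums ever appear on the coefficient side. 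The Fourier transform is instead confined to the operator side, where it appears as conjugation implementing the $\SL(2,\ZZ)$-action on $\widetilde{H}_n$ (\Cref{prop.aut.Hn.tilde,cor.psi}), so the equivariance of $T_k$ becomes a one-line index relabeling (\Cref{th:lequiv}). Without this decoupling, your plan for the $S$-generator cannot be completed as stated. Two smaller points: the index-independent Gaussian prefactors do not cancel exactly in the ratio --- they leave a common scalar, which is harmless only because relations are defined as the image of an operator; and for even $n$ the $\SL(2,\ZZ)$-action does not lift to $H_n$ at all (the Weil representation is genuinely projective there), which is why the paper must introduce the extension $\widetilde{H}_n$ --- your sketch is silent on this.
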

 
In \cref{subse:mod} we explain how this result can be interpreted 
as saying that $Q_{n,k}(\, \cdot \, | \, \cdot \, )$ is a ``weakly modular function  of 
weight $-1$ taking values in the category of graded $\CC$-algebras''. 
 
\subsection{The definition of $Q_{n,k}(\eta \, | \, \tau)$}
\label{ssect.defn}
Fix the data $(n,k,\eta,\tau)$ as above; i.e.,  $n > k \ge 1$ is a pair of relatively prime positive integers and 
$(\eta \, | \, \tau)$ is a point in $\CC \times \HH$.

Write $\Lambda_\tau:= \bZ+\bZ\tau$ and $E_\tau:=\CC/\L_\tau$. 

Fix a complex vector space $V$ with basis $x_0,\ldots,x_{n-1}$ indexed by $\bZ_n$, the 
cyclic group of order $n$. 
In \cite[Prop.~2.6]{CKS1}, we defined a particular basis $\theta_0,\ldots,\theta_{n-1}$, also  indexed by $\bZ_n$,  
 for the vector space $\Theta_n(\Lambda_\tau)$ which, by definition, consists of all holomorphic functions $f:\bC \to \bC$ such that
\begin{align*} 
f(z+1) & \;=\; f(z) \qquad  \qquad \text{$\quad$ and}
\\
f(z+\tau) & \; = \; - e(- n z) f(z)
\end{align*}
for all $z \in \bC$. Copying Feigin and Odesskii, we then defined for each 
$\eta \in \bC-\frac{1}{n}\Lambda_\tau$ a family of holomorphic linear operators
\begin{equation*}
R_{n,k}(z,\eta \, | \, \tau):V \otimes V \; \longrightarrow \; V \otimes V, \qquad z \in \bC,
\end{equation*}
given by the formula
\begin{equation}\label{the.relns}
  R_{n,k}(z,\eta\,|\,\tau)(x_i\otimes x_j)  \; := \; 
  \frac{\theta_0(-z) \cdots \theta_{n-1}(-z)}{\theta_1(0) \cdots \theta_{n-1}(0)}
  \sum_{r\in \bZ_n}
  \frac{\theta_{j-i+r(k-1)}(-z+\eta)}
  {\theta_{j-i-r}(-z)\theta_{kr}(\eta)} \,
  x_{j-r}\otimes x_{i+r}
\end{equation}
for all $(i,j) \in \bZ_n^2$.  Although this formula does not make sense when $\eta \in \frac{1}{n}\Lambda$, because in that case $\theta_{kr}(\eta)=0$ for some $r \in \ZZ_n$, the  function $R_{n,k}(\eta,\eta\,|\,\tau):\bC-\frac{1}{n}\Lambda_\tau \, \longrightarrow \, \End_\bC(V^{\otimes 2})$ extends in a unique way to a holomorphic function on the entire complex plane \cite[Lem.~3.13]{CKS1}---we will write $R_{n,k}(\eta,\eta\,|\,\tau)$ for the extension. This allows us to define, for all 
$\eta \in \bC$, the algebra
\begin{equation}
\label{defn.Q.eta.tau}
Q_{n,k}(\eta \, | \, \tau) \; :=\; \frac{TV}{(\text{the image of } R_{n,k}(\eta,\eta\,|\,\tau))} \, 
\end{equation}
where $TV$ denotes the tensor algebra on $V$.   

Although the formula for $R_{n,k}(z,\eta\,|\,\tau)$ is not very illuminating, by building on results and ideas of Richey and Tracy \cite{ric-tra} we 
were able to show in \cite{CKS4} that $R(z)=R_{n,k}(z,\eta\,|\,\tau)$ 
satisfies the quantum Yang-Baxter equation: for all $u,v \in \bC$,
\begin{equation*}
R(u)_{12} R(u+v)_{23}R(v)_{12} \;=\;  R(v)_{23} R(u+v)_{12}R(u)_{23} 
\end{equation*}
as operators on $V^{\otimes 3}$, where $R(z)_{12}:=R(z) \otimes I$, $R(z)_{23}:=I \otimes R(z)$, and $I$ is the identity operator on $V$. 
This identity plays a key role in the proof of several results in \cite{CKS4} which showed,  for suitable $\eta$, that $Q_{n,k}(\eta \, | \, \tau)$ has many of the good properties enjoyed by the polynomial ring on $n$ variables; for example, when the image of $\eta$ in 
$E_\tau$ is not a torsion point, $Q_{n,k}(\eta \, | \, \tau)$ has the same Hilbert series as that polynomial ring. 

The work of Feigin and Odesskii in \cite{FO-Kiev, FO89, OF95, Od-survey}, and our later work in \cite{CKS1,CKS2,CKS3,CKS4}, 
shows that $Q_{n,k}(\eta \, | \,\tau)$ has many beautiful properties. Roughly speaking, the algebras $Q_{n,k}(\eta \, | \,\tau)$ are a step beyond enveloping algebras and quantized enveloping algebras in the same way as elliptic and theta functions are a step beyond
rational and trigonometric functions.

\subsection{Feigin and Odesskii's elliptic algebras $Q_{n,k}(E,\xi)$}
\label{ssect.notation}

Let $X$ denote a compact Riemann surface of genus one. A pair $(X,p)$ consisting of a compact Riemann surface $X$ of genus one 
and a point $p$ on it will be called a {\sf complex elliptic curve}. We usually  omit the adjective ``complex''. 
There is a unique way to make $X$ an algebraic (or Lie) group 
with $p$ as its identity---see, e.g., \cite[Cor.~1.11]{Hain11}. We will always view $(X,p)$ as a group in this way. 

Given an elliptic  curve $E=(X,p)$ and a point $\xi \in E$, i.e., a second point $\xi \in X$, which is allowed to be $p$,
Feigin and Odesskii defined an algebra they denoted by $Q_{n,k}(E,\xi)$---see \cite{FO89}, and \cite{CKS1}
for a definition when $\xi$ is one of the points that Feigin and Odesskii disallow.\footnote{In truth, Feigin and Odesskii use the notation $Q_{n,k}(E,\tau)$ where $\tau$ denotes a point on $E$---see \Cref{ssect.tau.eta,ssect.warning} below.}

At \cite[\S1.2]{FO89}, Feigin and Odesskii define $Q_{n,k}(E,\xi)$ by the formula on the right-hand side of \Cref{defn.Q.eta.tau}
where $(\eta \, | \, \tau) \in \CC \times \HH$ is such that $E$ is isomorphic to 
$E_\tau$ and $\eta$ is a point whose image in $\CC/\ZZ+\ZZ\tau$ is $\xi$. 

There are many points $(\eta \, | \, \tau) \in \CC \times \HH$ with the property in the previous sentence.
It is easy to see that $Q_{n,k}(\eta \, | \, \tau) = Q_{n,k}(\eta' \, | \, \tau)$ if $\eta$ and $\eta'$ have the same image in 
$E_\tau$ so Feigin and Odesskii's definition of $Q_{n,k}(E,\xi)$ does not depend on
the choice of $\eta$. However, Feigin and Odesskii do not show that the algebra they denote by 
$Q_{n,k}(E,\xi)$ stays the same when $\tau$ is replaced by another  point $\tau' \in \HH$ for which $E$ is isomorphic to 
$E_{\tau'}$. We also failed to address this issue in our earlier papers  \cite{CKS1,CKS2,CKS3,CKS4}. 
\Cref{thm.main} remedies this oversight and, as we will now explain, allows us to conclude that $Q_{n,k}(E,\xi)$ is well-defined up to isomorphism when we define it as
\begin{equation*}
Q_{n,k}(E,\xi) \; :=\; Q_{n,k}(\eta \, | \, \tau)
\end{equation*}
where $\tau \in \HH$ is any point such that $E_\tau\cong E$ and the image of $\eta$ under this isomorphism is $\xi$.

There is a notion of isomorphism between such pairs $(E,\xi)=(X,p,\xi)$, and it follows from \Cref{thm.main}  
that $Q_{n,k}(E,\xi)$  is isomorphic to $Q_{n,k}(E',\xi')$ when $(E,\xi)$ is isomorphic to $(E',\xi')$.
More precisely:

\begin{theorem}[\Cref{cor.qnk.not}]
\label{thm.main.2}
If $(E,\xi)=(E_{\tau},\eta+\Lambda_{\tau})$ and $(E',\xi')=(E_{\tau'},\eta'+\Lambda_{\tau'})$  and $\mu:E \to E'$ is an isomorphism of algebraic groups such that the diagram
\begin{equation*}
\xymatrix{
E \ar[r]^\mu \ar[d]_{x  \mapsto x +\xi} & E' \ar[d]^{x' \mapsto x'+\xi'}
\\
E \ar[r]_\mu & E' 
}
\end{equation*} 
commutes, then 
\begin{equation*}
Q_{n,k}(\eta\,|\,\tau) \;\cong\; Q_{n,k}(\eta'\,|\,\tau').
\end{equation*}
\end{theorem}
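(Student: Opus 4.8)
The plan is to reduce the statement to \Cref{thm.main} by translating the geometric hypothesis on $\mu$ into an element of $\SL(2,\ZZ)$. First I would unwind the commuting square. Since $\mu\colon E\to E'$ is an isomorphism of algebraic groups it fixes the identity and is additive, so $\mu(x+\xi)=\mu(x)+\mu(\xi)$ for all $x$; comparing this with the square, which asserts $\mu(x+\xi)=\mu(x)+\xi'$, shows that the hypothesis is equivalent to the single condition $\mu(\xi)=\xi'$.

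Next I would invoke the classical description of morphisms between complex tori: every isomorphism of algebraic groups $E_\tau\to E_{\tau'}$ is induced by multiplication by some scalar $\alpha\in\CC^\times$ satisfying $\alpha\Lambda_\tau=\Lambda_{\tau'}$. Writing out what $\alpha\Lambda_\tau=\Lambda_{\tau'}$ means at the level of bases, the two ordered bases $(\alpha,\alpha\tau)$ and $(1,\tau')$ of $\Lambda_{\tau'}$ differ by an integral change of basis; because $\tau,\tau'\in\HH$ and multiplication by $\alpha$ is conformal (hence orientation-preserving), this change of basis lies in $\SL(2,\ZZ)$ rather than merely $\GL(2,\ZZ)$. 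A short computation then produces a matrix $\begin{pmatrix} a & b\\ c & d\end{pmatrix}\in\SL(2,\ZZ)$ with
\[
\tau'=\frac{a\tau+b}{c\tau+d}\qquad\text{and}\qquad \alpha=\frac{1}{c\tau+d}.
\]

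I would then track the marked point through $\mu$. By the previous step $\mu$ sends $\eta+\Lambda_\tau$ to $\alpha\eta+\Lambda_{\tau'}=\frac{\eta}{c\tau+d}+\Lambda_{\tau'}$, so the condition $\mu(\xi)=\xi'$ becomes $\eta'-\frac{\eta}{c\tau+d}\in\Lambda_{\tau'}$; in particular $\eta'$ and $\frac{\eta}{c\tau+d}$ have the same image in $E_{\tau'}$. Since, as already observed in \Cref{ssect.notation}, $Q_{n,k}(\,\cdot\,|\,\tau')$ depends on its first argument only through its class in $E_{\tau'}$, this gives $Q_{n,k}(\eta'\,|\,\tau')=Q_{n,k}\big(\frac{\eta}{c\tau+d}\,\big|\,\tau'\big)$.

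Finally I would apply \Cref{thm.main} to the matrix found above, which yields
\[
Q_{n,k}\Big(\tfrac{\eta}{c\tau+d}\,\Big|\,\tfrac{a\tau+b}{c\tau+d}\Big)\;\cong\;Q_{n,k}(\eta\,|\,\tau),
\]
and since $\tau'=\frac{a\tau+b}{c\tau+d}$ the left-hand side equals $Q_{n,k}\big(\frac{\eta}{c\tau+d}\,\big|\,\tau'\big)=Q_{n,k}(\eta'\,|\,\tau')$, completing the argument. The one point requiring genuine care is the bookkeeping in the second step: pinning down the precise matrix and verifying $\alpha=(c\tau+d)^{-1}$, equivalently checking that the $\SL(2,\ZZ)$ element extracted from $\alpha\Lambda_\tau=\Lambda_{\tau'}$ is exactly the one governing the M\"obius action on $\tau$, so that the hypotheses of \Cref{thm.main} line up with the scalar realizing $\mu$. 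Everything else is a direct substitution.
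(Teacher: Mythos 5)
Your proposal is correct and takes essentially the same route as the paper: its \Cref{lem.var.hain} is exactly your second step (every isomorphism $E_\tau\to E_{\tau'}$ is multiplication by a scalar, which must be $\alpha=(c\tau+d)^{-1}$ with $\tau'=\frac{a\tau+b}{c\tau+d}$ for some matrix in $\SL(2,\ZZ)$, the determinant being $+1$ because both imaginary parts are positive), and its \Cref{cor.qnk.not} then concludes, as you do, by applying \Cref{th:qequiv} together with the observation that $Q_{n,k}(\,\cdot\,|\,\tau')$ depends on its first argument only through its class in $E_{\tau'}$. The only cosmetic differences are that you unwind the commuting square explicitly into $\mu(\xi)=\xi'$, and you phrase the sign-of-determinant argument via conformality of multiplication by $\alpha$ where the paper computes $\Im(\tau')=\det(M)\,|c\tau+d|^{-2}\,\Im(\tau)$.
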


All the results about $Q_{n,k}(E,\xi)$ in Feigin and Odesskii's papers and in our earlier papers are, in fact, results about 
$Q_{n,k}(\eta \, | \, \tau)$.

\subsubsection{Changing the roles of  the symbols $\tau$ and $\eta$}
\label{ssect.tau.eta}
In our papers \cite{CKS1,CKS2,CKS3,CKS4}  we used $\eta$ to denote a point in $\HH$ and $\tau$ to denote a point in $\CC$.
In this paper we switch that notation in order to agree with the common convention in the literature on elliptic functions, theta functions, and modular forms, that $\tau$ denotes a point in $\HH$.

\subsubsection{Warning}
\label{ssect.warning}
In \cite{FO-Kiev,FO89},  Feigin and Odesskii use the symbol $\tau$ for both a point in $\bC$ and a point on $E$---in \cite[\S1.1]{FO89} the symbol $\tau$ denotes a point on $E$ but in \cite[\S1.2]{FO89} the notation $\theta_\alpha(\tau)$ only makes sense when $\tau \in \bC$.

\subsection{The case when $k=1$}
When $k=1$,   \Cref{thm.main.2} was proved by Tate and Van den Bergh \cite{TvdB96}.  
In fact, they proved more because they work with elliptic curves over arbitrary fields. 

In \cite[\S4.1]{TvdB96}, Tate and Van den Bergh define a graded $\FF$-algebra $A(E,\s,\cL)$ when $E$ is an elliptic curve over an 
arbitrary field $\FF$, $\s$ is a translation automorphism of $E$, 
and $\cL$ is an invertible $\cO_E$-module of degree $n \ge 3$. 
They then prove in Proposition 4.1.1 the following result: if $\mu:E' \to E$ is an isomorphism between elliptic curves, then there is a canonical isomorphism 
$A(E,\s,\cL) \to A(E',\mu^{-1}\s\mu,\mu^*\cL)$ sending $x \in A(E,\s,\cL)_1=H^0(E,\cL)$ to $\mu^*x \in H^0(E',\mu^*\cL)$. 

When $E$ is a complex elliptic curve, $\s$ is translation by $\xi$, and $\deg\cL=n$, then 
\begin{equation*}
A(E,\s,\cL) \; \cong \; Q_{n,1}(E,\xi) \; =\; Q_{n,1}(\eta \, | \, \tau)
\end{equation*}
where $\tau \in \HH$ is any point such that $E \cong E_\tau$ and $\xi$ is the image of $\eta$ under an isomorphism $E \to E_\tau$.\footnote{A proof that $A(E,\s,\cL)$ is isomorphic to  $Q_{n,1}(\eta \, | \, \tau)$  
is given in \cite[\S3.2.6]{CKS1}.} Thus, \cite[Prop.~4.1.1]{TvdB96} proves \Cref{thm.main.2} when $k=1$. 

%%%%%%%%%%%%%%%%%%%%%%%%%%%%%%%%%%%%%%%%%%%%%%%%%%%%%%%%%%%%%
\subsection{Remarks on the proof of the main theorem}
The main theorem is an immediate consequence of the ``functional equation'' \cref{funcl.eq.for.R} below.
Let us explain.

Let $M = {{a \; \,b} \choose {c \, \; d}} \in \SL(2,\ZZ)$ and let $(\eta' \, | \, \tau')=M \triangleright (\eta \, | \, \tau):=\big(\frac{\eta}{c\tau+d} \, | \, \frac{a\tau+b}{c\tau+d}\big)$. 
Since the space of defining relations for $Q_{n,k}(\eta \, | \, \tau)$ is the image of $R_{n,k}(\eta,\eta\,|\,\tau):V^{\otimes 2} \to V^{\otimes 2}$,
to show that $Q_{n,k}(\eta' \, | \, \tau')$ is isomorphic to 
$Q_{n,k}(\eta \, | \, \tau)$ we must show there is an automorphism $\psi \in \GL(V)$, depending on $M$, 
such that $\psi^{\otimes 2}$ sends the image of $R_{n,k}(\eta,\eta\,|\,\tau)$ to the image of $R_{n,k}(\eta',\eta'\,|\,\tau')$; i.e., 
such that 
\begin{equation*}
(\psi^{\otimes 2}) \circ R_{n,k}(\eta, \eta \, | \, \tau) = R_{n,k}(\eta', \eta' \, | \, \tau') \circ \phi
\end{equation*}
for some $\phi \in \GL(V^{\otimes 2})$;  \cref{funcl.eq.for.R} is a stronger and more precise result.

To state \cref{funcl.eq.for.R}  we  extend the action of $\SL(2,\ZZ)$ on $(\eta \, | \, \tau) \in \CC \times \HH$ to an action on
$(z, \eta \, | \, \tau) \in \CC \times \CC \times \HH$, 
\begin{equation*}
M \triangleright (z, \eta \, | \, \tau) \; :=\;  \left(\frac{z}{c\tau+d}\,, \, \frac{\eta}{c\tau+d} \; \bigg| \; \frac{a\tau+b}{c\tau+d}\right).
\end{equation*}
\Cref{th:requiv} shows there is a $\psi \in \GL(V)$ and a nowhere-vanishing holomorphic function $f(z)$, both depending on 
$M \in \SL(2,\ZZ)$, such that 
\begin{equation}
\label{funcl.eq.for.R} 
R_{n,k}\big(M \triangleright (z,\eta \, | \, \tau) \big) \; = \; 
  f(z) \,  \big(\psi^{\otimes 2} \big) \circ R_{n,k}(z,\eta \, | \, \tau) \circ  \big( \psi^{\otimes 2}\big)^{-1}.
\end{equation}

%%%%%%%%%%%%%%%%%%%%%%%%%%%%%%%%%%%%%%%%%%%%%%%%%%%%%%%%%%%%%
\subsection{The contents of this paper}
The equality \cref{funcl.eq.for.R}  and the main theorem are proved in \cref{se:main}. 
\Cref{se.prel-tht}, which concerns certain normalizations $w_{(u,v)}(z,\eta \, | \, \tau)$ of theta functions with characteristics,
and \cref{sect.Hn}, which is about a finite Heisenberg group $H_n$ of order $n^3$ and its extension $\widetilde{H}_n$ when $n$ is even, are essential preliminaries. 

The key to proving \cref{funcl.eq.for.R}  is to use a presentation of the $R$-matrix defined in \Cref{the.relns}
that has more structure. 
This is done in several steps. 

In \cref{sect.Hn}, we make $V$, the degree-one component of $Q_{n,k}(\eta \, | \tau)$, into an irreducible representation
of $H_n$; as a consequence there is a surjective homomorphism $\rho:\CC H_n \to \End_\CC(V)$
so we can, and do, choose a basis for $\End_\CC(V)$ consisting of images of certain elements $J_{(u,v)} \in H_n$---this works as 
stated when $n$ is odd but needs a small adjustment when $n$ is even; 
we then write $R_{n,k}(z,\eta \, | \tau)$ in terms of this new basis and the functions $w_{(u,v)}(z,\eta \, | \, \tau)$---the essential part of this rewriting involves the operator
$T_k(z,\eta \, | \, \tau):V^{\otimes 2} \to V^{\otimes 2}$ which is defined in \cref{defn.T(-nz)}---its relation to $R_{n,k}(z,\eta \, | \tau)$
is given in \cref{eq:tisr}.

The ``matrix coefficients'' for $T_k(z,\eta \, | \, \tau)$ involve the functions  $w_{(u,v)}(z, \eta \, | \, \tau)$ indexed by $(u,v) \in \ZZ^2$,
which  are introduced in \cref{se.prel-tht}.  The main result in  \cref{se.prel-tht} is that for each $M \in \SL(2,\ZZ)$ there is a 
nowhere-vanishing holomorphic function $g(z)$, depending on $M$ but not on $(u,v)$, such that
  \begin{equation*}
w_{(u,v)}\big(M\triangleright (z,\eta \, |  \, \tau)\big)   \; = \;     g(z) \,  w_{(u,v)M}(z,\eta \, |  \, \tau). 
  \end{equation*}
Thus, as far as $w_{(u,v)}(z,\eta \, | \, \tau)$ is concerned, the modular action of $\SL(2,\ZZ)$ on 
$ \CC \times \CC \times \HH$ can be replaced by the natural right action of  $\SL(2,\ZZ)$ on $\ZZ^2$ or $\ZZ_n^2$.

%%%%%%%%%%%%%%%%%%%%%%%%%%%%%%%%%%%%%%%%%%%%%%%%%%%%%%%%%%%%%%%%%%%%%%%%%%%%%%%%%%%%%%%%%%%%%%%%%%%%%%%%%%%%%%%%%%
\subsection*{Acknowledgements} 

A.C. was partially supported by NSF grant DMS-2001128.

R.K. was supported by JSPS KAKENHI Grant Numbers JP16H06337, JP17K14164, JP20K14288, and JP21H04994, Leading Initiative for Excellent Young Researchers, MEXT, Japan, and Osaka City University Advanced Mathematical Institute (MEXT Joint Usage/Research Center on Mathematics and Theoretical Physics JPMXP0619217849).

%%%%%%%%%%%%%%%%%%%%%%%%%%%%%%%%%%%%%%%%%%%%%%%%%%%%%%%%%%%%%%%%%%%%%%%%%%%%%%%%%%%%%%%%%%%%%%%%%%%%%%%%%%%%%%%%%%
%%%%%%%%%%%%%%%%%%%%%%%%%%%%%%%%%%%%%%%%%%%%%%%%%%%%%%%%%%%%%%%%%%%%%%%%%%%%%%%%%%%%%%%%%%%%%%%%%%%%%%%%%%%%%%%%%%

\section{Theta functions with characteristics and the functions $w_{(u,v)}(z)$}\label{se.prel-tht}

We will use the notation $\omega := e\big(\tfrac{1}{n}\big)$. 

When $\tau \in \HH$ we will write $\Lambda_\tau:=\ZZ+\ZZ\tau$ and $E_\tau:=\CC/\L_\tau$. 

We write $M^t$ for the transpose of a matrix $M$ and $M^{-t}$ for the inverse of $M^t$ when it exists.

%%%%%%%%%%%%%%%%%%%%%%%%%%%%%%%%%%%%%%%%%%%%%%%%%%%%%%%%%%%%%%%%%%%
\subsection{Definition of $\theta_{u,v}$ }
\label{subse:prel-theta}

As in  \cite[\S2.5]{CKS4}, we will use theta functions with characteristics, namely the functions
\begin{equation*}
  \theta_{u,v}(z \, | \, \tau) \;= \;  \vartheta \! \left[ u \atop v \right] \! (z \, | \, \tau) \; :=\; e\left(u(z+v)  + \tfrac{1}{2}u^2\tau  \right) \, \vartheta(z+u\tau+v \, |  \, \tau)
\end{equation*}
where $\vartheta(z \, |  \, \tau)$ is the function defined in \Cref{defn.vartheta}. In particular,  $\theta_{0,0}=\vartheta$.

\begin{proposition}
\label{prop.theta.uv}\leavevmode
Let $s,t \in \ZZ$.
\begin{enumerate}
  \item\label{eq.theta.v}
$   \theta_{u,v}(z+s \, | \, \tau ) = \theta_{u,v+s}(z \, | \, \tau)$.
  \item\label{eq.theta.st}
 $ \theta_{u,v}(z+s\tau+t \, | \, \tau) = e\big(-s(z+v) -\frac{s^2\tau}2+tu\big) \,  \theta_{u,v}(z \, | \, \tau)$.
  \item\label{eq.theta.zero}
  $\theta_{u,v}(z \, | \, \tau)=0$ if and only if  $z \in \frac 12(\tau+1) - (u\tau+v)+ \Lambda_{\tau}$.
  \item
  Each of these zeros is a simple zero.
\end{enumerate}
\end{proposition}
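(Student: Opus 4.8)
The plan is to reduce all four assertions to two transformation laws for the basic theta function $\vartheta(z\mid\tau)$ of \Cref{defn.vartheta}, together with the classical determination of its zero. First I would record, directly from the defining series, the periodicity $\vartheta(z+1\mid\tau)=\vartheta(z\mid\tau)$ and the quasi-periodicity $\vartheta(z+\tau\mid\tau)=e(-z-\tfrac\tau2)\vartheta(z\mid\tau)$; iterating the latter yields $\vartheta(z+s\tau\mid\tau)=e(-sz-\tfrac{s^2}2\tau)\vartheta(z\mid\tau)$ for every $s\in\ZZ$, obtained by completing the square $\tfrac12 n^2\tau+ns\tau=\tfrac12(n+s)^2\tau-\tfrac12 s^2\tau$ inside the sum. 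I would also note the evenness $\vartheta(-z\mid\tau)=\vartheta(z\mid\tau)$, which follows by reindexing $n\mapsto -n$.

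With these in hand, parts (1) and (2) are bookkeeping of the nowhere-vanishing prefactor $e(u(z+v)+\tfrac12 u^2\tau)$ in the definition of $\theta_{u,v}$. For (1) the identity $\theta_{u,v}(z+s\mid\tau)=\theta_{u,v+s}(z\mid\tau)$ is literally a regrouping: both sides equal $e(u(z+v+s)+\tfrac12 u^2\tau)\,\vartheta(z+u\tau+v+s\mid\tau)$, so no analysis is needed. For (2) I would substitute $z\mapsto z+s\tau+t$ into the definition, absorb the integer shift $t$ by periodicity, and apply the iterated $\tau$-law to $w=z+u\tau+v$; on collecting the exponential arguments the cross term $us\tau-su\tau$ cancels, and what remains is exactly $e(-s(z+v)-\tfrac{s^2}2\tau+tu)$ times $\theta_{u,v}(z\mid\tau)$. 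This is a short, forced computation once the laws are recorded.

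Parts (3) and (4) are the real content, and I expect the zero count to be the main obstacle. Since the prefactor never vanishes, $\theta_{u,v}(z\mid\tau)=0$ if and only if $\vartheta(z+u\tau+v\mid\tau)=0$, so everything reduces to locating and counting the zeros of $\vartheta$. The plan is to show $\vartheta(\cdot\mid\tau)$ has exactly one zero, simple, in each fundamental parallelogram for $\Lambda_\tau$. For the count I would apply the argument principle to $\tfrac{1}{2\pi i}\oint \vartheta'/\vartheta$ around a generic fundamental parallelogram with sides $1$ and $\tau$: the pair of edges related by $z\mapsto z+1$ cancels because $\vartheta'/\vartheta$ is $1$-periodic, while on the pair related by $z\mapsto z+\tau$ the relation $\frac{d}{dz}\log\vartheta(z+\tau\mid\tau)=\vartheta'/\vartheta(z\mid\tau)-2\pi i$ produces a net contribution of $2\pi i$, giving winding number $1$. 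To pin the location I would evaluate at $z_0=\tfrac12(1+\tau)$: combining $\vartheta(-z_0\mid\tau)=\vartheta(z_0\mid\tau)$ with the $\tau$-shift and period-$1$ laws forces $\vartheta(z_0\mid\tau)=e(z_0-\tfrac\tau2)\vartheta(z_0\mid\tau)=-\vartheta(z_0\mid\tau)$, since $z_0-\tfrac\tau2=\tfrac12$, whence $\vartheta(z_0\mid\tau)=0$. As the contour integral already caps the total multiplicity at $1$, this single zero is automatically simple and is the only one modulo $\Lambda_\tau$; translating by $-(u\tau+v)$ gives the zero set $\tfrac12(\tau+1)-(u\tau+v)+\Lambda_\tau$ of part (3), and simplicity transfers verbatim to $\theta_{u,v}$, proving (4). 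The one genuinely delicate point is the contour argument — choosing the parallelogram to avoid zeros and correctly tracking the $-2\pi i$ defect — so that is where I would spend the most care.
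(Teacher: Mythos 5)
Your proof is correct, and it takes a genuinely different route from the paper's: the paper does not reprove these facts at all, but simply cites its precursor \cite{CKS4} (the introductory remarks in \S 2.5, Prop.~2.8, and Lem.~2.5 there, with the roles of $\eta$ and $\tau$ interchanged) together with Richey--Tracy \cite{ric-tra} for the zero locus. You instead derive everything from the series \Cref{defn.vartheta}: the bookkeeping in (1)--(2) is forced once you record $\vartheta(z+1\,|\,\tau)=\vartheta(z\,|\,\tau)$ and the iterated law $\vartheta(z+s\tau\,|\,\tau)=e(-sz-\tfrac12 s^2\tau)\vartheta(z\,|\,\tau)$ (your cross-term cancellation $us\tau-su\tau$ in (2) checks out), and for (3)--(4) you run the classical argument: the integral of $\vartheta'/\vartheta$ around a generic fundamental parallelogram gives exactly one zero counted with multiplicity (the $1$-periodic edges cancel, the $\tau$-edges contribute the defect $-2\pi i$ per unit length, hence total $2\pi i$), while evenness plus quasi-periodicity at $z_0=\tfrac12(1+\tau)$ gives $\vartheta(z_0\,|\,\tau)=e(\tfrac12)\vartheta(z_0\,|\,\tau)=-\vartheta(z_0\,|\,\tau)$, so $z_0$ is that unique zero and is automatically simple; translating by $-(u\tau+v)$ and using that the prefactor $e(u(z+v)+\tfrac12u^2\tau)$ never vanishes yields (3) and (4). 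The two delicate points you flag are the right ones, and both are easily dispatched: the contour can be chosen to avoid zeros because $\vartheta\not\equiv 0$ (its Fourier coefficients $e(\tfrac12 n^2\tau)$ are nonzero, so its zeros are isolated), and the iterated $\tau$-law is exactly the completed square $\tfrac12 n^2\tau+ns\tau=\tfrac12(n+s)^2\tau-\tfrac12 s^2\tau$. What your approach buys is a proof readable without access to \cite{CKS4}; what the paper's citation buys is brevity and consistency with the earlier papers in the series, where these identities were established once and reused.
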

\begin{proof}
Parts \cref{eq.theta.v}, \cref{eq.theta.st}, and \cref{eq.theta.zero} were stated in \cite[the introductory remarks in \S2.5, and Prop.~2.8]{CKS4},
with the roles of $\eta$ and $\tau$ interchanged. For \cref{eq.theta.zero}, see also \cite[p.~315, (iv)]{ric-tra} and  the discussion before Proposition 2.8 in \cite{CKS4}. 
By \cite[Lem.~2.5]{CKS4}, $\theta_{u,v}(z \, | \, \tau)$ has a single zero in each fundamental parallelogram for $\L_\tau$
located at $\frac 12(1+ \tau) - (u\tau+v)$ modulo $\L_\tau$.  
\end{proof}

\subsection{Definition of $w_{(u,v)}$}\label{subse:def-w}
 
As in \cite[\S3]{CKS4}, we define
\begin{equation}\label{eq:6}
  w_{(u,v)}(z) \;:=\; \frac{\theta_{\frac un,\frac vn}(z+\zeta  \, | \, \tau)}{\theta_{\frac un,\frac vn}(\zeta \, | \, \tau)} \, , 
\end{equation}
where
\begin{equation}\label{eq:4}
  \zeta  \; :=\;  \eta + \tfrac{1}{2}(\tau+1).
\end{equation}
We denoted $\eta + \tfrac{1}{2}(\tau+1)$ by $\xi$ in \cite{CKS4},  but in this paper $\xi$ always denotes an arbitrary point on $E$.

The next result follows immediately from \Cref{prop.theta.uv}.

\begin{proposition}
\label{prop.wuv.properties}
The functions $w_{(u,v)}(z)$ have the following properties:
\begin{enumerate}
  \item
 $  w_{(u,v)}(0)=1$;  
  \item\label{eq.w.qper}
if $s,t \in \ZZ$, \ then
\begin{equation*}
\phantom{xxxx} \frac{ w_{(u,v)}(z+s\tau +t)}{w_{(u,v)}(z)}  \; = \; e\left(- s(z+\eta) -\tfrac{s}{2} (s\tau + \tau+1)   + \tfrac{1}{n}(tu -sv) \right);
\end{equation*}
\item\label{eq.w.mod}
$w_{(u,v)}(z)=0$ if and only if $z=-\eta -\frac{1}{n}(u\tau+v)$ modulo $\Lambda_\tau$;
  \item\label{eq.w.simple}
  each of these zeros is a simple zero.
\end{enumerate}
\end{proposition}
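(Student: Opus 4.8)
The plan is to establish each of the four claims by unwinding the definition \eqref{eq:6} of $w_{(u,v)}$ and invoking the corresponding item of \Cref{prop.theta.uv} applied to the theta function with characteristics $\left(\tfrac un,\tfrac vn\right)$. No genuinely new computation is needed beyond careful bookkeeping of the exponential prefactors, which is why the proposition can be asserted to follow immediately. I will organize the argument claim by claim.

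The normalization $w_{(u,v)}(0)=1$ is obtained by setting $z=0$ in \eqref{eq:6}, where numerator and denominator coincide. For the quasi-periodicity, I would abbreviate $Z := z+\zeta$ with $\zeta=\eta+\tfrac12(\tau+1)$ as in \eqref{eq:4}, so that
\[
\frac{w_{(u,v)}(z+s\tau +t)}{w_{(u,v)}(z)} \;=\; \frac{\theta_{\frac un,\frac vn}(Z+s\tau+t\,|\,\tau)}{\theta_{\frac un,\frac vn}(Z\,|\,\tau)}.
\]
Applying \cref{eq.theta.st} with characteristics $\left(\tfrac un,\tfrac vn\right)$ evaluates this ratio as $e\big(-s(Z+\tfrac vn)-\tfrac{s^2\tau}{2}+\tfrac{tu}{n}\big)$. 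Substituting $Z=z+\eta+\tfrac12(\tau+1)$ and regrouping the constants then yields the claimed expression in \cref{eq.w.qper}.

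The regrouping is the only place where a slip is possible, so it is the step I would verify most carefully, though it is entirely routine: the two $\tau$-linear contributions combine as $-\tfrac s2(\tau+1)-\tfrac{s^2\tau}{2}=-\tfrac s2(s\tau+\tau+1)$, while the remaining $\tfrac1n$-terms combine as $-\tfrac{sv}{n}+\tfrac{tu}{n}=\tfrac1n(tu-sv)$. For the zeros in \cref{eq.w.mod}, I would note that $w_{(u,v)}(z)=0$ exactly when $\theta_{\frac un,\frac vn}(z+\zeta\,|\,\tau)=0$, which by \cref{eq.theta.zero} happens iff $z+\eta+\tfrac12(\tau+1)\in\tfrac12(\tau+1)-\tfrac1n(u\tau+v)+\Lambda_\tau$; cancelling $\tfrac12(\tau+1)$ gives $z\equiv-\eta-\tfrac1n(u\tau+v)\pmod{\Lambda_\tau}$. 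Finally, \cref{eq.w.simple} is immediate because $w_{(u,v)}$ is a nonzero constant multiple of the translate $z\mapsto\theta_{\frac un,\frac vn}(z+\zeta\,|\,\tau)$, whose zeros are simple by the last item of \Cref{prop.theta.uv}.
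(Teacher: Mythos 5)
Your proof is correct and follows exactly the route the paper intends: the paper simply asserts that the proposition ``follows immediately from \Cref{prop.theta.uv}'', and your claim-by-claim unwinding of the definition \cref{eq:6} (with the regrouping $-\tfrac s2(\tau+1)-\tfrac{s^2\tau}{2}=-\tfrac s2(s\tau+\tau+1)$ and $-\tfrac{sv}{n}+\tfrac{tu}{n}=\tfrac1n(tu-sv)$, and the cancellation of $\tfrac12(\tau+1)$ in the zero locus) is precisely the routine verification being left to the reader. Nothing is missing and no step deviates from the paper's argument.
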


In particular, $w_{(u,v)} (z+1) = e(\frac{u}{n}) w_{(u,v)} (z)$ and $w_{(u,v)} (z+\tau) = e(-z-\eta-\tau -\frac{1}{2} - \frac{1}{n}v) w_{(u,v)} (z)$.

\subsubsection{}
The notation $\theta_{u,v}$ and $w_{(u,v)}$ makes sense for arbitrary complex numbers $u$ and $v$. 
As explained in the discussion preceding Theorem 3.1 in \cite{CKS4}, when $u$ and $v$ are integers $w_{(u,v)}(z)$ depends only on their images in $\ZZ_n$ so, in such a case, we will think of the subscript in $w_{(u,v)}(z)$ as an element of $\bZ_n^2$.

%%%%%%%%%%%%%%%%%%%%%%%%%%%%%%%%%%%%%%%%%%%%%%%%%%%%%%%%%%%%%%%%%%%
\subsection{The action of $\SL(2,\bbZ)$ on $\CC \times \CC \times \HH$}

If $M={{a \; \,b} \choose {c \, \; d}} \in \GL(2,\RR)$ and $\tau \in \HH$ we define
\begin{equation*}
M \triangleright \tau \; := \; \frac{a\tau+b}{c\tau+d} \, .
\end{equation*}
If $\Im(z)$ denotes the imaginary part of a complex number $z$,
$ \Im(M \triangleright \tau)= \det(M) \, |c\tau+d|^{-2} \Im(\tau)$. 
Hence the group $\GL^+(2,\bR)$ of $2 \times 2$ real matrices with positive determinant acts on $\bH$. 
That action extends to actions on $\bC\times\bH$, given by  the formula
\begin{equation}\label{eq:gl2act}
  \begin{pmatrix}
    a&b\\
    c&d
  \end{pmatrix}
  \,
  \triangleright
  \, 
  (z \, | \, \tau)
  \; := \;
  \left(\frac{z}{c\tau+d} \; \bigg \vert \; \frac{a\tau+b}{c\tau+d}\right),
\end{equation}
and on $\bC \times \bC\times\bH$, given by  the formula
\begin{equation}\label{eq:actredux}
  \begin{pmatrix}
    a&b\\
    c&d
  \end{pmatrix}
\,  \triangleright \, 
(z,\eta \, | \, \tau)
  \; := \;
  \left(\frac{z}{c\tau+d}\,, \, \frac{\eta}{c\tau+d} \; \bigg| \; \frac{a\tau+b}{c\tau+d}\right).
\end{equation}
In particular, the modular group $\SL(2,\bZ)$ acts on $\bC\times\bH$ and $\bC \times \bC\times\bH$. 
The  action on $\bC\times\bH$  is the same as the action in \cite[(2.12)]{ric-tra}, 
which is a source we often refer to,  both here and in \cite{CKS4}.

\subsection{Equivariance properties of $w_{(u,v)}(z,\eta \, | \, \tau)$}
 To state the next two results, we make explicit the dependence of $w_{(u,v)}$ on all parameters by writing
\begin{equation}\label{eq:wtrip}
  w_{(u,v)}(z,\eta \, | \, \tau) \; := \;  w_{(u,v)}(z).
\end{equation}

\begin{proposition}\label{pr:wev}
  Let  $M ={{a \; \,b} \choose {c \, \; d}} \in \SL(2,\bZ)$ act on the triples $(z,\eta \, | \, \tau)$ via \Cref{eq:actredux}.
  The functions
  \begin{equation}\label{eq:onab}
    z \, \mapsto \, w_{(u,v)M}(z,\eta \, | \, \tau)
  \end{equation}
 and
  \begin{equation}\label{eq:onzt}
    z\, \mapsto \, w_{(u,v)}\big(M \triangleright (z,\eta \, | \, \tau)\big)
  \end{equation}
  have the same zeros with the same multiplicities.
\end{proposition}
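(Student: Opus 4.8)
The plan is to compute the zero sets, with multiplicities, of the two functions \cref{eq:onab} and \cref{eq:onzt} directly, and to observe that they coincide; the only inputs needed are \Cref{prop.wuv.properties} and the relation $\det(M)=1$.

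First I would record the zeros of \cref{eq:onab}. By parts \cref{eq.w.mod} and \cref{eq.w.simple} of \Cref{prop.wuv.properties}, for any pair $(u,v)$ the function $z\mapsto w_{(u,v)}(z,\eta\,|\,\tau)$ is holomorphic on $\CC$ — up to the normalizing constant $\theta_{\frac un,\frac vn}(\zeta\,|\,\tau)$ it is the entire function $z\mapsto\theta_{\frac un,\frac vn}(z+\zeta\,|\,\tau)$ — and has only simple zeros, located exactly at $z\equiv-\eta-\frac1n(u\tau+v)\pmod{\Lambda_\tau}$. Applying this with $(u,v)$ replaced by $(u,v)M=(ua+vc,\,ub+vd)$ shows that \cref{eq:onab} has simple zeros precisely at
\[
  z\;\equiv\;-\eta-\tfrac1n\big((ua+vc)\tau+(ub+vd)\big)\pmod{\Lambda_\tau}.
\]

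Next I would analyze \cref{eq:onzt}. Writing $\tau':=\frac{a\tau+b}{c\tau+d}$, $\eta':=\frac{\eta}{c\tau+d}$ and $z':=\frac{z}{c\tau+d}$, the function in \cref{eq:onzt} is $z\mapsto w_{(u,v)}(z',\eta'\,|\,\tau')$. Since $c\tau+d\neq0$, the substitution $z\mapsto z'=z/(c\tau+d)$ is a linear biholomorphism of $\CC$ and hence preserves the order of every zero, so it suffices to transport the zero locus of $w_{(u,v)}(\,\cdot\,,\eta'\,|\,\tau')$. By the first paragraph (now with the parameters $\eta',\tau'$) these zeros are the simple points $z'=-\eta'-\frac1n(u\tau'+v)+m+m'\tau'$ with $m,m'\in\ZZ$. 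Multiplying by $c\tau+d$ and using $(c\tau+d)\eta'=\eta$ and $(c\tau+d)\tau'=a\tau+b$ turns them into
\[
  z\;=\;-\eta-\tfrac1n\big(u(a\tau+b)+v(c\tau+d)\big)+m(c\tau+d)+m'(a\tau+b).
\]

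The crux is then a lattice identification. Because $\det M=ad-bc=1$, the pair $\{c\tau+d,\;a\tau+b\}$ is a $\ZZ$-basis of $\Lambda_\tau=\ZZ+\ZZ\tau$ — the change-of-basis matrix $\left(\begin{smallmatrix}d&b\\ c&a\end{smallmatrix}\right)$ has determinant $1$ — so $m(c\tau+d)+m'(a\tau+b)$ ranges over all of $\Lambda_\tau$ as $(m,m')$ ranges over $\ZZ^2$. Regrouping the remaining terms, the zeros of \cref{eq:onzt} are exactly $z\equiv-\eta-\frac1n((ua+vc)\tau+(ub+vd))\pmod{\Lambda_\tau}$, all simple, which is precisely the set found for \cref{eq:onab}. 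Comparing the two descriptions finishes the argument. The step I expect to require the most care is this last lattice identification: one must check that rescaling by $c\tau+d$ carries $\Lambda_{\tau'}$ onto $\Lambda_\tau$ while simultaneously converting the $-\eta'$ and $-\frac1n u\tau'$ contributions into the correct data $-\eta$ and $-\frac1n(ua+vc)\tau$, and this is exactly where $\det M=1$ enters.
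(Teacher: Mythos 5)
Your proposal is correct and follows essentially the same route as the paper: both arguments identify the zero loci via \Cref{prop.wuv.properties}\cref{eq.w.mod}, use the fact that $\det M=1$ makes $\{a\tau+b,\,c\tau+d\}$ a $\ZZ$-basis of $\Lambda_\tau$, rescale by $c\tau+d$ to match the two lattices, and invoke \Cref{prop.wuv.properties}\cref{eq.w.simple} for multiplicities. The only difference is cosmetic — you compute both zero sets explicitly and compare, while the paper transforms the membership condition for one function directly into that of the other.
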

\begin{proof}
Let
\begin{equation*}
(z',\eta' \, | \, \tau') \; :=\; M \triangleright (z,\eta \, | \, \tau) \;=\;  \left(\frac{z}{c\tau+d},\ \frac{\eta}{c\tau+d}\ \bigg| \ \frac{a\tau+b}{c\tau+d}\right).
\end{equation*}
By \Cref{prop.wuv.properties}\cref{eq.w.mod}, $w_{(u,v)M}(z,\eta \, | \, \tau)=0$ if and only if 
  \begin{equation*}
   z+\eta + \tfrac{1}{n}\big( (au+cv)\tau+(bu+dv)\big)  \, \in \,  \Lambda_{\tau}  \;  = \; \ZZ(a\tau+b) + \ZZ(c\tau+d),    
  \end{equation*}
   where the  equality uses the fact that $M \in \SL(2,\bZ)$. 
 After dividing by $c\tau+d$, this condition becomes
    \begin{equation*}
   z'+\eta' + \tfrac{1}{c\tau+d} \,  \tfrac{1}{n} \,  \big( u(a\tau+b) +v(c\tau+d) \big)  \, \in \,   \ZZ\tau' +\ZZ \; =\; \Lambda_{\tau'}.
  \end{equation*}
Therefore $w_{(u,v)M}(z,\eta \, | \, \tau)=0$ if and only if   
     \begin{equation*}
   z'+\eta' +  \tfrac{1}{n} \big( u \tau' +v  \big)  \, \in \, \Lambda_{\tau'}.
  \end{equation*} 
  By \Cref{prop.wuv.properties}\cref{eq.w.mod}, this happens if and only if $w_{(u,v)}(z', \eta' \, | \, \tau')=0$; i.e., 
  if and only if $w_{(u,v)}\big(M \triangleright (z,\eta \, | \, \tau)\big) =0$. Thus the zeros of the functions are the same. By \Cref{prop.wuv.properties}\cref{eq.w.simple} all the zeros are simple zeros so their multiplicities are the same.
\end{proof}

We will now improve \Cref{pr:wev}. First, we recall some standard terminology.

\subsubsection{Quasi-periodicity}
Suppose that $\{\omega_1,\omega_2\}$ is an $\RR$-basis for $\CC$, and let $\L:=\ZZ\omega_1+\ZZ\omega_2$.
We say that a function $f:\CC \to \CC$ is {\sf quasi-periodic with respect to $\L$} if there are elements $a,b,c,d\in \CC$ such that
\begin{align*}
f(z+\omega_1) & \;=\; e^{2\pi i (az+b)}f(z) \qquad \hbox{and} \qquad
\\
f(z+\omega_2) & \;=\; e^{2\pi i (cz+d)}f(z)
\end{align*}
for all $z \in \CC$.

\subsubsection{Factors of automorphy}
A  {\sf factor of automorphy} for $\Lambda_\tau$ is a function
\begin{equation*}
  \Lambda_{\tau }\times \bC \, \longrightarrow \, \CC^\times, \qquad  (\l,z) \mapsto e_{\l}(z),
\end{equation*}
such that $z\mapsto e_{\lambda}(z)$ is a nowhere-vanishing holomorphic function on $\bbC$ and
\begin{equation*}
e_{\l+\mu}(z) \; = \; e_\l(z+\mu)e_\mu(z)
\end{equation*}
for all $\l, \mu \in \L_\tau$; i.e.,  the function $\lambda \mapsto e_\l(-)$ is a 1-cocycle on 
$\Lambda_\tau$ taking values in the group of nowhere-vanishing holomorphic functions on $\CC$, where
 the latter is a representation of $\L_\tau$ via the action $(\l \cdot f)(z):=f(z+\l)$. 

If $f(z \, | \, \tau)$ is quasi-periodic in $z$ with respect to $\ZZ+\ZZ\tau$, then its quasi-periodicity properties  imply that the function
\begin{equation*}
(s\tau+t,z) \, \mapsto \, e_{s\tau+t}(z) \; :=\; \frac{f(z+s\tau+t \, | \, \tau)} {f(z \, | \, \tau)} \qquad  \qquad (s,t\in \ZZ)
\end{equation*}
is a factor of automorphy for $\ZZ+\ZZ\tau$. We call it {\sf the factor of automorphy for $f(z)$}. 
For example, \Cref{prop.wuv.properties}\cref{eq.w.qper} says that the factor of automorphy for $w_{(u,v)}(z,\eta \, | \, \tau)$ is
\begin{equation*}
  e\left( - s(z+\eta) -\tfrac{s}{2} (s\tau + \tau+1)  + \tfrac{1}{n}(u,v) \begin{pmatrix} t \\ -s \end{pmatrix} \right).
\end{equation*}

\subsubsection{}
\label{ssect.gen.princ}
The following fact will be used in the next proof:
 if $f(z \, | \, \tau)$ and $g(z \, | \, \tau)$ are quasi-periodic with respect to $\ZZ+\ZZ\tau$ 
having the same factors of automorphy and the same zeros (counted with multiplicity), then there is a constant $c$ such that
$f(z \, | \, \tau)=c\, g(z \, | \, \tau)$ for all $z \in \CC$. The reason is that $f(z \, | \, \tau)/g(z \, | \, \tau)$ is an elliptic function with respect to
$\Lambda_\tau$ having no zeros and no poles, and hence constant. 
 
\subsubsection{Vague remark}
The terminology ``factor of automorphy'' is also used in the context of line bundles on the elliptic curve $E_{\tau}:=\bC/\Lambda_{\tau}$. Theta functions with respect to $\Lambda_{\tau}$ are essentially the same things as sections of line bundles on $E_\tau$
 (see, e.g., \cite[p.~24 and Appendix~B]{bl}).

\begin{theorem}\label{th:wuvs}
Let $M\in \SL(2,\bZ)$. 
There is a nowhere-vanishing holomorphic function $f(z)=f_M(z)$, that does not depend on $(u,v)$, such that
  \begin{equation*}
    w_{(u,v)M}(z,\eta \, |  \, \tau) \; = \;  f(z) \,w_{(u,v)}\big(M\triangleright (z,\eta \, |  \, \tau)\big).
  \end{equation*}
\end{theorem}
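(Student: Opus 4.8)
The plan is to take for $f=f_M$ the ratio
\begin{equation*}
f_{(u,v)}(z) \;:=\; \frac{w_{(u,v)M}(z,\eta\,|\,\tau)}{w_{(u,v)}\big(M\triangleright(z,\eta\,|\,\tau)\big)}
\end{equation*}
and to prove that it is holomorphic, nowhere-vanishing, and --- the real content --- independent of $(u,v)$. By \Cref{pr:wev} the numerator and denominator have the same zeros with the same multiplicities, so $f_{(u,v)}$ extends across those zeros to a nowhere-vanishing holomorphic function on $\CC$; the theorem then amounts to the equality $f_{(u,v)}=f_{(u',v')}$ for all index pairs. First I would record that, as functions of $z$, both numerator and denominator are quasi-periodic with respect to $\Lambda_\tau$: this is immediate for the numerator, and for the denominator it holds because the substitution $z\mapsto z':=z/(c\tau+d)$ carries $\Lambda_\tau$ onto $\Lambda_{\tau'}$, since $(c\tau+d)\Lambda_{\tau'}=\ZZ(a\tau+b)+\ZZ(c\tau+d)=\Lambda_\tau$ as $M\in\SL(2,\ZZ)$. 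This is what lets the generating principle recorded before \Cref{th:wuvs} --- that a nowhere-vanishing holomorphic elliptic function is constant --- apply in the single variable $z$.

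Next I would compute the two factors of automorphy, tracking only their dependence on $(u,v)$. By \Cref{prop.wuv.properties}\cref{eq.w.qper}, the $(u,v)$-dependent part of the factor of automorphy of the numerator $w_{(u,v)M}$, for translation by $s\tau+t$, is $e\big(\tfrac1n(u,v)M\binom{t}{-s}\big)$. For the denominator, translation by $s\tau+t$ in $z$ corresponds to translation by $s'\tau'+t'$ in $z'$, where the integers $(s',t')$ are forced by $s'(a\tau+b)+t'(c\tau+d)=s\tau+t$, i.e.\ $(s',t')=(s,t)M^{-1}$; hence its $(u,v)$-dependent part is $e\big(\tfrac1n(u,v)\binom{t'}{-s'}\big)$.

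The crux is therefore the identity $\binom{t'}{-s'}=M\binom{t}{-s}$. Writing $K:=\begin{pmatrix}0&1\\-1&0\end{pmatrix}$, one has $K\binom{s}{t}=\binom{t}{-s}$, while transposing $(s',t')=(s,t)M^{-1}$ gives $\binom{s'}{t'}=M^{-t}\binom{s}{t}$, so $\binom{t'}{-s'}=KM^{-t}K^{-1}\binom{t}{-s}$. The needed equality $KM^{-t}K^{-1}=M$ follows from the symplectic relation $M^tKM=K$ --- which holds precisely because $\det M=1$ --- together with $K^2=-I$. This is the one place where $M\in\SL(2,\ZZ)$, rather than merely $\GL$, is essential, and I expect this factor-of-automorphy bookkeeping, and the recognition that the $(u,v)$-dependence collapses to $M^tKM=K$, to be the main obstacle. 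It follows that the $(u,v)$-dependent parts of the numerator's and denominator's factors of automorphy coincide, so the factor of automorphy of $f_{(u,v)}$ is independent of $(u,v)$.

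Finally I would conclude as follows. For any two index pairs, $f_{(u,v)}/f_{(u',v')}$ is a nowhere-vanishing holomorphic function whose factor of automorphy is trivial, hence an elliptic function with neither zeros nor poles, hence a constant. Evaluating at $z=0$ pins this constant down: since $M\triangleright(0,\eta\,|\,\tau)=(0,\eta'\,|\,\tau')$ and $w_{(u,v)}(0)=1$ by the normalization in \Cref{prop.wuv.properties}, we get $f_{(u,v)}(0)=1$ for every pair, so the constant is $1$ and $f_{(u,v)}=f_{(u',v')}$. Setting $f:=f_{(0,0)}$ then gives the nowhere-vanishing holomorphic function, independent of $(u,v)$, asserted in the theorem.
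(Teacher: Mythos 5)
Your proof is correct, and while it shares its technical core with the paper's, it is organized in a genuinely different way. Both arguments rest on the same two pillars: \Cref{pr:wev} (same zeros), and the translation-index identity $\binom{t'}{-s'}=M\binom{t}{-s}$ for $(s',t')=(s,t)M^{-1}$, which forces the $(u,v)$-dependent parts of the two automorphy factors to agree. That identity appears verbatim as the Claim inside the paper's proof, established there by direct computation, whereas you derive it from the relation $M^{t}KM=\det(M)\,K$ for your matrix $K$, which also isolates cleanly where $\det M=1$ is needed. The structural difference is what happens next. The paper reduces to the two standard generators $X$ and $Y$ of $\SL(2,\ZZ)$ (using stability of the statement under products and inverses) and, for each generator, \emph{explicitly constructs} the function: $f_Y\equiv 1$, and $f_X(z)=e\big(-\tfrac{1}{2\tau}z^2+\big(\tfrac{1}{2\tau}-\tfrac12-\tfrac{\eta}{\tau}\big)z\big)$, found by solving for the quadratic exponential that corrects the mismatch between the $(u,v)$-free parts of the two factors of automorphy. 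You instead handle arbitrary $M$ in one stroke and never construct $f_M$ at all: since the ratio $f_{(u,v)}$ is nowhere-vanishing holomorphic by \Cref{pr:wev} and quasi-periodic with a factor of automorphy independent of $(u,v)$, any quotient $f_{(u,v)}/f_{(u',v')}$ is an elliptic function with neither zeros nor poles, hence constant, and equal to $1$ after evaluating at $z=0$. Your route is shorter, uniform in $M$, and avoids the paper's composition step over generators, whose stated rule $f_{MN}=f_M f_N$ is in any case slightly loose (the factor for $M$ must really be evaluated at the $N$-transformed variable and parameters, though this does not affect the validity of the reduction). What the paper's route buys in exchange is explicit formulas for $f_M$ on the generators, which your argument does not produce but which the theorem as stated does not require.
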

\begin{proof}
  If the theorem holds for $M$ and $N$, then it holds for $MN$ with $f_{MN}(z)=f_M(z)f_N(z)$ and for $M^{-1}$ with $f_{M^{-1}}(z)=f_{M}(z)^{-1}$. 
 It therefore suffices to prove the theorem for each of the two generators (\cite[\S 1.5.3]{trees})
  \begin{equation*}
    X:=\begin{pmatrix}
     0&-1\\
      1& \phantom{-}0
    \end{pmatrix}
  \qquad  \text{ and } \qquad
    Y:=\begin{pmatrix}
      1&1\\
      0&1
    \end{pmatrix}
  \end{equation*}
  of $\SL(2,\bZ)$.  
To prove the theorem it suffices to show that $w_{(u,v)M}(z,\eta \, |  \, \tau)$
and $w_{(u,v)}\big(M\triangleright (z,\eta \, |  \, \tau)\big)$ have the same quasi-periodicity properties with respect to $\Lambda_{\tau}$,
and the  same zeros. We already showed in \Cref{pr:wev} that 
they have the same zeros, so it remains to show that they have the same factors of automorphy with respect to $\Lambda_\tau$.

Let  $M={{a \; \,b} \choose {c \, \; d}} \in \SL(2,\ZZ)$. 
As in the previous proof, we adopt the notation 
\begin{align*}
  (z',\eta' \, | \, \tau')  & \; := \; M \triangleright(z,\eta \, | \, \tau) \;=\;  \left(\frac{z}{c\tau+d},\ \frac{\eta}{c\tau+d}\ \bigg| \ \frac{a\tau+b}{c\tau+d} \right).
\end{align*}

{\sf Claim.} There are unique integers $s',t'$ such that $(z'+s'\tau'+t',\eta' \, | \, \tau') = M\triangleright(z+s\tau +t,\eta \, | \,  \tau)$,
  namely $s':=ds-ct$ and $t':=at-bs$, i.e., $ { \;\; t' \choose \, -s' \, } = M  { \;\; t \choose \, -s \, }$. 

 {\sf Proof.}
 By definition, 
 \begin{equation*}
 M\triangleright(z+s\tau +t,\eta \, | \,  \tau)   \;=\;  \left(\frac{z+s\tau+t}{c\tau+d},\ \frac{\eta}{c\tau+d}\ \bigg| \ \frac{a\tau+b}{c\tau+d} \right).
\end{equation*}
Since $z'=\frac{z}{c\tau+d}$ it suffices to show there are unique integers $s',t'$ such that 
$s'\tau'+t' \;=\; \tfrac{s\tau+t}{c\tau+d}$. Certainly, if such an $(s',t')\in \ZZ^2$ exists it is unique because $\{1,\tau'\}$ is linearly
independent over $\ZZ$---even over $\RR$.  
Since $\tau'=\frac{a\tau+b}{c\tau+d}$, a simple calculation shows that  $(ds-ct)\tau'+(at-bs) \;=\; \tfrac{s\tau+t}{c\tau+d}$: just
multiply both sides by $c\tau+d$ and use the fact that $ad-bc=1$.\footnote{Alternatively, if $(s',t') \in \RR^2$ is such that $(c\tau+d)(s',t') {{\tau'} \choose {1}} = (s,t) {{\tau} \choose {1}}$,
then
$(s,t)  {{\tau} \choose {1}}=(s',t')  {{a\tau+b} \choose {c\tau+d}} =  (s',t') \,  M {{\tau} \choose {1}}$
and, since $\{1,\tau\}$ is linearly independent over $\RR$,  
$(s',t') =(s,t)M^{-1}= (s,t) {{\;\; d \; \,-b} \choose {-c \, \;\, \; a}}=(ds-ct,at-bs)$. 
} 
$\lozenge$

By \Cref{prop.wuv.properties}\cref{eq.w.qper}, the factor of automorphy for $w_{(u,v)M}(z\, | \, \tau)$
is
\begin{equation}\label{eq:1fact}
 e\left( - s(z+\eta)  -\tfrac{s}{2} (s\tau + \tau+1) + \tfrac{1}{n} \, (u,v) \, M \begin{pmatrix} t \\ -s \end{pmatrix} 
  \right).
\end{equation}
By \Cref{prop.wuv.properties}\cref{eq.w.qper}, the factor of automorphy for $w_{(u,v)}\big(M\triangleright (z,\eta \, |  \, \tau)\big)$,
 i.e., the ratio
\begin{equation*}
  \frac
  {w_{(u,v)}\left(M\triangleright (z+s\tau+t,\eta \, | \, \tau)\right)}
  {w_{(u,v)}\left(M \triangleright (z,\eta \, | \, \tau)\right)}
\;  = \;
  \frac{      w_{(u,v)}(z'+s'\tau'+t',\eta' \, | \, \tau')    }
  {     w_{(u,v)}  (z' , \eta' \, | \, \tau')   },
\end{equation*}
is 
\begin{equation}\label{eq:1factother}
e\left(- s'(z'+\eta') -\tfrac{s'}{2} (s'\tau' + \tau'+1)   + \tfrac{1}{n}(u,v) \begin{pmatrix} t' \\ -s'\end{pmatrix} \right).
\end{equation}
But the terms involving $(u,v)$ in \Cref{eq:1fact} and \Cref{eq:1factother}  are equal,  
so
\begin{equation}
\label{eq:ratio}
\frac{ \text{\Cref{eq:1fact}}} { \text{\Cref{eq:1factother}} }   \;=\;
\frac{  e\left( - s(z+\eta)  -\tfrac{s}{2} (s\tau + \tau+1)  \right) } {  e\left(- s'(z'+\eta')-\tfrac{s'}{2} (s'\tau' + \tau'+1)  \right) } \,.
\end{equation}
  
  (1) 
 (Proof   for $M=Y$.)
 In this case,   $(z',  \eta' \, | \, \tau')= (z,\eta \, | \, \tau+1)$ and  $(t',-s')  = (t,-s)Y^{t} = (t-s,-s)$.
  Thus, when $M=Y$, the  denominator in \Cref{eq:ratio}  is
\begin{equation*}
 e\left( - s(z+\eta) -\tfrac{s}{2} (s\tau +s + \tau +1+1) \right),
\end{equation*}
 which is  equal to the numerator  in \Cref{eq:ratio} because $e\big( -\tfrac{s}{2} (s+1) \big)=1$.
Hence \Cref{eq:1fact}=\Cref{eq:1factother} when $M=Y$ so,  by  the remark in \Cref{ssect.gen.princ}, 
 $w_{(u,u+v)}(z,\eta \, | \, \tau) = c\, w_{(u,v)}(z,\eta \, | \, \tau+1)$ for some constant $c$. But $w_{(u,u+v)}(0,\eta \, | \, \tau) = 1
 = w_{(u,v)}(0,\eta \, | \, \tau+1)$ so $c=1$, and we conclude that the theorem holds for $Y$ with $f_{Y}(z)=1$ for all $z$. 

 (2)
 (Proof for $M=X$.)
   In this case,  $(u,v)M=(v,-u)$, $ (z',\eta' \, | \, \tau') = (z/\tau, \,  \eta/\tau \, | \, -1/\tau)$,    and $ (s',t')  = (-t,s)$.
 Thus, when $M=X$,
\begin{align}
\label{eq:ratio.fa}
\frac{ \text{\Cref{eq:1fact}}} { \text{\Cref{eq:1factother}} }  & \;=\;
e\left( - s(z+\eta) -\tfrac{s}{2} (s\tau + \tau+1)   - \tfrac{t}{2} (\tfrac{t}{\tau}-\tfrac{1}{\tau} +1) - \tfrac{t}{\tau}(z+\eta)\right). 
\end{align}
If we replaced the term $w_{(u,v)M}(z,\eta \, |  \, \tau)$  by $g(z)\, w_{(u,v)M}(z,\eta \, |  \, \tau)$
where $g(z)=e(Az^2+Bz)$, then \Cref{eq:ratio.fa} would be multiplied by
\begin{equation*}
\frac{g(z+s\tau+t)}{g(z)} \;=\; \frac{e(A(z+s\tau+t)^2+B(z+s\tau+t))}{e(Az^2+Bz)} \;=\; 
e\big(A(2z+s\tau+t) (s\tau+t) + B(s\tau+t)\big).
\end{equation*}
Hence if $A=\frac{1}{2\tau}$ and $B=\frac{1}{2}-\frac{1}{2\tau}+\frac{\eta}{\tau}$, 
then \Cref{eq:ratio.fa}  would be multiplied by
\begin{equation*}
e\big(\tfrac{1}{2\tau}(2z+s\tau+t) (s\tau+t) + \big(\tfrac{1}{2}-\tfrac{1}{2\tau}+\tfrac{\eta}{\tau}\big)(s\tau+t)\big)
\end{equation*}
which equals 
\begin{equation*}
e\big( 
sz +\tfrac{zt}{\tau} +\tfrac{1}{2\tau}( s^2\tau^2 + 2 st\tau+t^2)  + \tfrac{s\tau}{2}- \tfrac{s}{2} +s\eta 
+ \tfrac{t}{2}-\tfrac{t}{2\tau}+\tfrac{t\eta}{\tau}
\big).
\end{equation*}
A straightforward calculation shows that the product of this with  \Cref{eq:ratio.fa}  equals 1, so we conclude that
 $g(z)\, w_{(u,v)X}(z,\eta \, |  \, \tau)$ and $w_{(u,v)}\big(X\triangleright (z,\eta \, |  \, \tau)\big)$ have the same factors of
 automorphy with respect to $\L_\tau$. Since $g(z) \ne 0$ for all $z$, they also have the same zeros by  \Cref{pr:wev}, and both take the 
 value 1 when $z=0$, so we conclude that 
 \begin{equation*}
  w_{(u,v)X}(z,\eta \, |  \, \tau) \;=\; 
e\big( - \tfrac{1}{2\tau}z^2 + \big(\tfrac{1}{2\tau} - \tfrac{1}{2} - \tfrac{\eta}{\tau}\big)   z \big) \,   w_{(u,v)}\big(X\triangleright (z,\eta \, |  \, \tau)\big).
\end{equation*}

 The proof is complete.
\end{proof}

%%%%%%%%%%%%%%%%%%%%%%%%%%%%%%%%%%%%%%%%%%%%%%%%%%%%%%%%%%%%%%%%%%%
%%%%%%%%%%%%%%%%%%%%%%%%%%%%%%%%%%%%%%%%%%%%%%%%%%%%%%%%%%%%%%%%%%%
\section{An extension of the finite Heisenberg group}
\label{sect.Hn}
%%%%%%%%%%%%%%%%%%%%%%%%%%%%%%%%%%%%%%%%%%%%%%%%%%%%%%%%%%%%%%%%%%%
%%%%%%%%%%%%%%%%%%%%%%%%%%%%%%%%%%%%%%%%%%%%%%%%%%%%%%%%%%%%%%%%%%%

Heisenberg groups play an important  organizational role in the theory of theta functions and abelian varieties---see, for example, 
\cite[Ch.~6]{bl}, \cite[Ch.~I, \S3]{Mum07},  \cite{mum-tata-III}, and \cite{Polishchuk-book}. 
In the present setting, the relevant Heisenberg group is an extension $1 \to \mu_n \to H_n \to \ZZ_n^2 \to 0$ where
$\mu_n$ is the group of complex $n^{\th}$ roots of unity. Below we will make $V$, the degree-one component of $Q_{n,k}(\eta \, | \, \tau)$,
an irreducible representation for $H_n$. As Feigin and Odesskii first noticed, that action lifts to an action of $H_n$ as automorphisms of 
$Q_{n,k}(\eta \, | \, \tau)$ (see \cite[\S\S2.3 and 3.5]{CKS1} for details).

\Cref{prop.GL2.non-action} shows that when $n$ is odd the natural action of each 
$M \in \SL(2,\ZZ)$ on $\ZZ_n^2=H_n/\langle \epsilon \rangle$ 
lifts to an action of $M$ as an automorphism of $H_n$.
(But this does not yield a homomorphism $\SL(2,\ZZ) \to \Aut(H_n)$.) Such a lifting does not exist when $n$ is even. 
For this reason we introduce an index-two extension $\widetilde{H}_n$ of $H_n$  for which such a lifting does exist. The utility of 
$H_n$ and $\widetilde{H}_n$  becomes apparent in \cref{se:main} when we introduce a new presentation for 
 the linear operator  $R_{n,k}(z,\eta \, | \, \tau):V^{\otimes 2} \to V^{\otimes 2}$ in terms of a new linear operator 
 $T_k(z,\eta \, | \, \tau):V^{\otimes 2} \to V^{\otimes 2}$ that
transforms nicely with respect to the actions of, first, $\SL(2,\ZZ)$ on $(z,\eta \, | \, \tau)$ and the subscripts $(u,v) \in \ZZ_n^2$
 in $w_{(u,v)}(z)$,    and, second,  $\widetilde{H}_n$ on $V^{\otimes 2}$, and, third,   an action of $\SL(2,\ZZ)$ on  $\widetilde{H}_n$.

\subsection{The Heisenberg group $H_n$}
The Heisenberg group of order $n^3$ is 
\begin{equation*}
  H_n \; := \; 
  \left\langle
    S, T, \epsilon\ |\ S^n=T^n=\epsilon^n=1,\ [S,\epsilon]=[T,\epsilon]=1,\ [S,T]=\epsilon
  \right\rangle.
\end{equation*}

The following equalities are useful when computing in $H_n$:
\begin{align*}
S^aT^b & \; =\; T^bS^a \epsilon^{ab},
\\
S^{a_1}T^{b_1} S^{a_2}T^{b_2} S^{a_3}T^{b_3} \cdots & \; =\; T^B S^A \epsilon^{C},
\quad \text{where}  \; \; A=a_1+a_2+\cdots,  \quad B=b_1+b_2+\cdots,
\\
  & \phantom{xxxxxxxxxxxxxx} C=a_1b_1+(a_1+a_2)b_2+(a_1+a_2+a_3)b_3 + \cdots,
\\  
(S^aT^b)^m & \; =\; T^{bm}S^{am} \epsilon^{X},  \quad \text{where}  \; \; X= \tfrac{1}{2}m(m+1)ab,
\\  
(T^bS^a)^m & \; =\; T^{bm}S^{am} \epsilon^{Y},  \quad \text{where}  \; \; Y= \tfrac{1}{2}m(m-1)ab.
\end{align*}

\begin{proposition}
\label{prop.GL2.non-action}
Let $M={{a \; \,b} \choose {c \, \; d}} \in \GL(2,\ZZ)$. If $n$ is odd, there is an automorphism 
$\Psi_M':H_n \to H_n$ such that
\begin{equation}
\label{eq:bad.map}
  \Psi_M': \qquad    T \mapsto   T^aS^c,  \qquad   S \mapsto  T^bS^d, \qquad  \epsilon
  \mapsto \epsilon^{\det M}.  
\end{equation}
\end{proposition}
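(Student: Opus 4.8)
The plan is to define $\Psi_M'$ on the generators by the assignment \cref{eq:bad.map}, then to verify first that the proposed images satisfy the defining relations of $H_n$---so that $\Psi_M'$ extends to a group endomorphism---and second that this endomorphism is bijective, hence an automorphism. Write $T':=T^aS^c$, $S':=T^bS^d$, and $\epsilon':=\epsilon^{\det M}$ for the images. Since $\epsilon'$ is a power of the central element $\epsilon$, it is itself central, so the relations $[S',\epsilon']=[T',\epsilon']=1$ and $(\epsilon')^n=(\epsilon^n)^{\det M}=1$ hold automatically. The real content is therefore to check the two relations $(T')^n=(S')^n=1$ and $[S',T']=\epsilon'$.

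The relation $(T')^n=1$ is exactly where the hypothesis that $n$ is odd enters, and I expect it to be the main point of the argument. Applying the displayed identity $(T^{\beta}S^{\alpha})^{m}=T^{\beta m}S^{\alpha m}\epsilon^{\frac12 m(m-1)\alpha\beta}$ with $\beta=a$, $\alpha=c$, $m=n$, and using $T^n=S^n=1$, gives
\begin{equation*}
(T')^n=(T^aS^c)^n=\epsilon^{\frac12 n(n-1)ac}.
\end{equation*}
Because $n$ is odd, $\tfrac12 n(n-1)=n\cdot\tfrac{n-1}{2}$ is a multiple of $n$, so the correction term $\epsilon^{\frac12 n(n-1)ac}$ equals $1$ and $(T')^n=1$; the same computation with $\beta=b$, $\alpha=d$ yields $(S')^n=\epsilon^{\frac12 n(n-1)bd}=1$. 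It is precisely the possible nonvanishing of this correction term for even $n$ that obstructs the lift and motivates the extension $\widetilde{H}_n$ introduced later.

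For the commutator relation I would compute $S'T'$ and $T'S'$ directly from $S^aT^b=T^bS^a\epsilon^{ab}$, obtaining $S'T'=T^{a+b}S^{c+d}\epsilon^{ad}$ and $T'S'=T^{a+b}S^{c+d}\epsilon^{bc}$. Hence $S'T'=T'S'\epsilon^{ad-bc}$, and since $\epsilon$ is central this gives $[S',T']=S'T'(S')^{-1}(T')^{-1}=\epsilon^{ad-bc}=\epsilon^{\det M}=\epsilon'$, as needed. Together with the previous paragraph this shows that $\Psi_M'$ is a well-defined endomorphism of $H_n$.

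Finally, to see that $\Psi_M'$ is bijective it suffices, as $H_n$ is finite, to prove surjectivity. Since $M\in\GL(2,\ZZ)$ its determinant is $\pm1$, a unit modulo $n$, so $\langle\epsilon'\rangle=\langle\epsilon\rangle$ and $\epsilon$ lies in the image. Modulo the central subgroup $\langle\epsilon\rangle$, the images $T'$ and $S'$ reduce to the vectors $(a,c)$ and $(b,d)$ in $\ZZ_n^2=H_n/\langle\epsilon\rangle$, and these span $\ZZ_n^2$ precisely because $ad-bc=\det M$ is invertible modulo $n$. Thus $T',S',\epsilon$ generate $H_n$, the map $\Psi_M'$ is onto, and the proof is complete.
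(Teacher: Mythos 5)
Your proof is correct and follows essentially the same route as the paper's: verify that the proposed images satisfy the defining relations of $H_n$ (with the key point that the correction term $\epsilon^{\frac12 n(n-1)ac}$ vanishes precisely because $n$ is odd) together with the commutator computation $\big(T^bS^d\big)\big(T^aS^c\big)\big(T^bS^d\big)^{-1}\big(T^aS^c\big)^{-1}=\epsilon^{ad-bc}$. You additionally spell out the bijectivity argument (surjectivity via generation modulo the center, using $\det M=\pm1$), which the paper leaves implicit under ``the result follows'' and ``the calculations contain no surprises.''
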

\begin{proof}
  The expressions $T^aS^c$ and $T^bS^d$ make sense when $a,b,c,d \in \ZZ_n$ because $T$ and $S$ have order $n$.  The result follows from the fact that $T^aS^c$ and $T^bS^d$ have order $n$, and
\begin{equation*}
\big(T^bS^d \big) \, \big(T^aS^c \big) \big(T^bS^d \big)^{-1} \, \big(T^aS^c \big)^{-1} 
\;=\; \epsilon^{ad-bc}.  
\end{equation*}
The calculations contain no surprises.
\end{proof}

The map $\GL(2,\ZZ) \to \Aut(H_n)$, $M \mapsto \Psi'_M$,  is {\it not} a group homomorphism. 
For example, if $M={{0 \; \,-1} \choose {1\; \; \, \; 1}}$, then  
 $\Psi_M(T)=S$, $\Psi_M(S)=T^{-1}S$, $\Psi_M \Psi_M(S)= S^{-1}T^{-1}S= \epsilon T^{-1}$, but $\Psi_{M^2}(S)=T^{-1}$.  Similarly, if $M={{0 \; \,1} \choose {1 \, \; 0}}$ and $N={{1 \; \,0} \choose {1 \, \; 1}}$, then $\Psi_M\Psi_N(T)=\Psi_M(TS)=ST \ne TS=\Psi_{MN}(T)$.

Because  $M \mapsto \Psi'_M$ is not a group homomorphism we will make no use of $\Psi'_M$.

When $n$ is even, an additional problem arises: the map $\Psi_M'$ in \Cref{eq:bad.map} does not extend to  an
automorphism of $H_n$ because $T^aS^c$ and $T^bS^d$ need not have order $n$. 
For example, $T S$ has order $2n$.\footnote{ If $b$ and $c$ are even residues modulo $n$, however, both $T^a S^c$ and $T^b S^d$ have order $n$.}  In order to address this problem we introduce a new group when $n$ is even, 
the group $    \widetilde{H}_n$ defined below.

%%%%%%%%%%%%%%%%%%%%%%%%%%%%%%%%%%%%%%%%%%%%%%%%%%%%%%%%%%%%%%%%%%%%%%%%%%%%%
\subsection{The extension $\widetilde{H}_n$ of $H_n$ }\label{subse:act}

When $n$ is even we define
\begin{equation*}
    \widetilde{H}_n \; := \; 
    \left\langle
    S, T, \epsilon^{1/2}\ \; \Big\vert \; S^n=T^n=\epsilon^n=1,\ [S,\epsilon^{1/2}]=[T,\epsilon^{1/2}]=1,\ [S,T]=\epsilon=(\epsilon^{1/2})^2
  \right\rangle.
  \end{equation*} 
  Here $\epsilon^{1/2}$ is a symbol that behaves like a square root of $\epsilon$.  We note that $\langle S, T, \epsilon\rangle$ is a normal subgroup of index two in $\widetilde{H}_n$ and is isomorphic to $H_n$.

 \subsubsection{The notation $\widetilde{H}_n$ when $n$ is odd}
 \label{ssect.H-tilde.notn}
  For convenience, we define $\widetilde{H}_n:=H_n$ when $n$ is odd, with the convention that $\epsilon^{1/2}:=\epsilon^{(n+1)/2}$. 
  In that case,  $(\epsilon^{(n+1)/2})^{2}=\epsilon$, so $\langle\epsilon^{(n+1)/2}\rangle=\langle\epsilon\rangle$, and $(\epsilon^{1/2})^{-1}=\epsilon^{(n-1)/2}$.

\subsubsection{The action of $\widetilde{H}_n$ as automorphisms of $Q_{n,k}(\eta \, | \, \tau)$}
Feigin and Odesskii observed  that $H_n$ acts as automorphisms of  $Q_{n,k}(\eta \, | \, \tau)$
\cite[p.~1143]{Od-survey} via the actions
\begin{equation*}
S \cdot x_i = \omega^i x_i, \qquad  T \cdot x_i = x_{i+1}, \qquad  \epsilon \cdot x_i =  e\big(\tfrac{1}{n}\big) x_i.
\end{equation*}
A proof of this can be found at \cite[Prop.~3.23]{CKS1}. 
It is easy to see that this extends to an action of $\widetilde{H}_n$ as automorphisms of 
$Q_{n,k}(\eta \, | \, \tau)$ when $n$ is even.

\begin{proposition}
\label{prop.qnk.act}
The group $\widetilde{H}_n$ acts as degree-preserving $\CC$-algebra automorphisms of $Q_{n,k}(\eta \, | \, \tau)$ by 
\begin{equation}
\label{Hberg.action.on.V}
S \cdot x_i = \omega^i x_i, \qquad  T \cdot x_i = x_{i+1}, \qquad  \epsilon^{1/2} \cdot x_i = - e\big(\tfrac{1}{2n}\big) x_i
=  e\big(\tfrac{n+1}{2n}\big) x_i.
\end{equation}
\end{proposition}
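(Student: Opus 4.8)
The plan is to realize \eqref{Hberg.action.on.V} first as an honest action of $\widetilde{H}_n$ on $V$, then to promote this linear action to a graded algebra automorphism of the tensor algebra $TV$ and check that it descends to the quotient $Q_{n,k}(\eta\,|\,\tau)$. Since the Feigin--Odesskii action of $H_n$ recalled just above the proposition is already known to consist of algebra automorphisms, with a proof at \cite[Prop.~3.23]{CKS1}, the only genuinely new ingredient is the central generator $\epsilon^{1/2}$, which acts by a scalar.

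First I would verify that the three operators in \eqref{Hberg.action.on.V} satisfy the defining relations of $\widetilde{H}_n$, thereby producing a homomorphism $\widetilde{H}_n \to \GL(V)$. Here $S$ is diagonal with eigenvalues $\omega^i$, the operator $T$ is the cyclic shift $x_i \mapsto x_{i+1}$ with indices read modulo $n$, and $\epsilon^{1/2}$ is the scalar $e(\tfrac{n+1}{2n})$. Thus $S^n = T^n = 1$ follows from $\omega^n = 1$ and $x_{i+n}=x_i$; centrality of $\epsilon^{1/2}$, and hence $[S,\epsilon^{1/2}]=[T,\epsilon^{1/2}]=1$ and $\epsilon^n = 1$, is automatic because it is a scalar; and a one-line computation gives $STS^{-1}T^{-1} x_i = \omega x_i$, which agrees with $(\epsilon^{1/2})^2 x_i = e(\tfrac{n+1}{n})\,x_i = \omega\,x_i$, so the relation $[S,T] = (\epsilon^{1/2})^2$ holds. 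On the subgroup $\langle S,T,\epsilon\rangle\cong H_n$ this reproduces exactly the action recalled before the proposition; the $n$ odd case is subsumed since the convention $\epsilon^{1/2}=\epsilon^{(n+1)/2}$ makes $\epsilon^{1/2}$ act by $\omega^{(n+1)/2}=e(\tfrac{n+1}{2n})$, in agreement with \eqref{Hberg.action.on.V}.

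For the descent, I would use that every $g \in \GL(V)$ extends uniquely to a degree-preserving $\CC$-algebra automorphism of $TV$, which carries the two-sided ideal generated by the image of $R_{n,k}(\eta,\eta\,|\,\tau)\subseteq V^{\otimes 2}$ onto the ideal generated by $g^{\otimes 2}$ applied to that image. Hence $g$ descends to a degree-preserving automorphism of $Q_{n,k}(\eta\,|\,\tau)$ precisely when $g^{\otimes 2}$ preserves the image of $R_{n,k}(\eta,\eta\,|\,\tau)$, and by invertibility of $g^{\otimes 2}$ it suffices that it map this subspace into itself. For $g \in H_n$ this is the content of \cite[Prop.~3.23]{CKS1}, while for $g = \epsilon^{1/2}$ the operator $g^{\otimes 2}$ is the scalar $e(\tfrac{n+1}{n})=\omega$, which preserves every subspace. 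Since $H_n$ and $\epsilon^{1/2}$ generate $\widetilde{H}_n$, this settles all of $\widetilde{H}_n$ and completes the proof.

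I do not expect a substantive obstacle: the relation-checking is routine and the descent for $H_n$ is quoted. The only point meriting care is the bookkeeping of the square-root scalar, namely confirming $\big(e(\tfrac{n+1}{2n})\big)^2 = \omega$. This identity is exactly what makes the relation $[S,T]=(\epsilon^{1/2})^2$ hold, and hence what justifies the normalization $\epsilon^{1/2}\cdot x_i = -e(\tfrac{1}{2n})\,x_i$ recorded in \eqref{Hberg.action.on.V}; everything else is inherited from the already-established $H_n$-equivariance of $R_{n,k}(\eta,\eta\,|\,\tau)$.
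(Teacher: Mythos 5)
Your proof is correct and takes essentially the same approach as the paper, which states the proposition without a separate proof after citing \cite[Prop.~3.23]{CKS1} for the $H_n$-action and remarking that the extension to $\widetilde{H}_n$ is easy. Your write-up just makes the implicit argument explicit: the operators in \cref{Hberg.action.on.V} satisfy the defining relations of $\widetilde{H}_n$ (including the key identity $\big(e(\tfrac{n+1}{2n})\big)^2=\omega$, which also matches the odd-$n$ convention $\epsilon^{1/2}=\epsilon^{(n+1)/2}$), and the central generator acts by a scalar, so it automatically preserves the span of the relations.
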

 
 \subsubsection{}
The reason for choosing $\epsilon^{1/2} \cdot x_i = - e\big(\tfrac{1}{2n}\big) x_i$ rather than $\epsilon^{1/2} \cdot x_i =  e\big(\tfrac{1}{2n}\big) x_i$ in \cref{Hberg.action.on.V} is so the formula works for both even and odd $n$; i.e., 
the choice we have made for $\epsilon^{1/2} \cdot x_i $ is
compatible with the convention in \Cref{ssect.H-tilde.notn} that $\epsilon^{1/2}=\epsilon^{(n+1)/2}$ when $n$ is odd,
but the other choice is incompatible.

 \subsection{The action of $\SL(2,\bZ)$ as automorphisms of $\widetilde{H}_n$}

There are many actions of $\SL(2,\bZ)$ as automorphisms of $\widetilde{H}_n$,
% in fact, exactly $2n^2$ when $n$ is even.
but we single out one particular action in \Cref{prop.aut.Hn.tilde} that has the virtue of simplicity.
%, but it is, in a sense that we explain in \cite{CKS7}, ``better'' than the other actions.

\begin{lemma}
\label{lem.automs.PsiM}
Let $\nu =\epsilon^{1/2}$ when $n$ is even and let $\nu=\epsilon^{(n+1)/2}$ when $n$ is odd.
If $M={{a \; \,b} \choose {c \, \; d}} \in \GL(2,\ZZ)$, then there is an automorphism  $\Psi_M$ of $\widetilde{H}_n$
such that 
\begin{align*}
\Psi_M  \, : \qquad &   T\mapsto T^a S^c \nu^{ac},  \qquad S\mapsto T^b S^d \nu^{bd},  \qquad \nu \mapsto \nu^{ad-bc},
\\
& T^mS^r\mapsto T^{am+br}S^{cm+dr}\nu^Z ,
\end{align*}
where $Z=acm^2+bdr^2+2bcmr$.
\end{lemma}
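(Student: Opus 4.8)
The plan is to treat $\Psi_M$ as given by its values on the generators $T,S,\nu$ of $\widetilde{H}_n$, to first check that these values satisfy the defining relations of $\widetilde{H}_n$ (so that $\Psi_M$ extends to an endomorphism), then to derive the displayed closed form for $\Psi_M(T^mS^r)$, and finally to prove that $\Psi_M$ is bijective. Throughout I would use that $\nu$ is central with $\nu^2=\epsilon$, that $\nu^{2n}=1$ when $n$ is even and $\nu^n=1$ when $n$ is odd, together with the power- and swap-identities for $H_n$ listed just before \Cref{prop.GL2.non-action}.

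\emph{Relations.} The relations to verify are $\Psi_M(T)^n=\Psi_M(S)^n=1$, the centrality of $\Psi_M(\nu)=\nu^{\det M}$ together with $\Psi_M(\nu)^{2n}=1$, and $[\Psi_M(S),\Psi_M(T)]=\Psi_M(\nu)^2$. Using $(T^aS^c)^m=T^{am}S^{cm}\epsilon^{\frac12 m(m-1)ac}$ and centrality of $\nu$, one finds $\Psi_M(T)^m=(T^aS^c\nu^{ac})^m=T^{am}S^{cm}\nu^{m^2ac}$, whence $\Psi_M(T)^n=\nu^{n^2ac}=1$: indeed $2n\mid n^2ac$ when $n$ is even and $n\mid n^2ac$ when $n$ is odd. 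The same argument gives $\Psi_M(S)^n=1$. Centrality of $\Psi_M(\nu)$ is automatic since it is a power of $\nu$, and $\Psi_M(\nu)^{2n}=1$ follows from $\nu^{2n}=1$. For the commutator, the central $\nu$-factors cancel, so $[\Psi_M(S),\Psi_M(T)]=[T^bS^d,T^aS^c]$; one application of $S^aT^b=T^bS^a\epsilon^{ab}$ yields $\Psi_M(S)\Psi_M(T)=\epsilon^{\,ad-bc}\,\Psi_M(T)\Psi_M(S)$, i.e.\ $[\Psi_M(S),\Psi_M(T)]=\epsilon^{\det M}=\nu^{2\det M}=\Psi_M(\nu)^2$, as required. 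This handles both parities uniformly and shows $\Psi_M$ is a well-defined endomorphism of $\widetilde{H}_n$.

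\emph{Closed form and bijectivity.} Since $\Psi_M(T)^m=T^{am}S^{cm}\nu^{m^2ac}$ and $\Psi_M(S)^r=T^{br}S^{dr}\nu^{r^2bd}$, I would compute $\Psi_M(T^mS^r)=\Psi_M(T)^m\Psi_M(S)^r$ by moving $S^{cm}$ past $T^{br}$ via $S^{cm}T^{br}=T^{br}S^{cm}\epsilon^{bcmr}=T^{br}S^{cm}\nu^{2bcmr}$; collecting exponents gives exactly $T^{am+br}S^{cm+dr}\nu^{Z}$ with $Z=acm^2+bdr^2+2bcmr$. For bijectivity, note $\widetilde{H}_n$ is finite, so it is enough to see $\Psi_M$ is injective. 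The endomorphism $\Psi_M$ carries the central subgroup $\langle\nu\rangle$ to itself by $\nu\mapsto\nu^{\det M}=\nu^{\pm1}$, which is bijective on $\langle\nu\rangle$, and it induces on $\widetilde{H}_n/\langle\nu\rangle\cong\ZZ_n^2$ the map $\binom{m}{r}\mapsto M\binom{m}{r}$, which is bijective because $\det M=\pm1$ is a unit modulo $n$. The short five lemma applied to the central extension $1\to\langle\nu\rangle\to\widetilde{H}_n\to\ZZ_n^2\to1$ then forces $\Psi_M$ to be an automorphism. Note that this argument needs only the single endomorphism $\Psi_M$; the fact that $M\mapsto\Psi_M$ is multiplicative is a separate matter, addressed in \Cref{prop.aut.Hn.tilde}.

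The one genuinely delicate point is the order computation $\Psi_M(T)^n=1$ for even $n$: without the correction factor $\nu^{ac}$ one only gets $(T^aS^c)^n=\nu^{n(n-1)ac}$, which is nontrivial when $n$ is even and $ac$ is odd. This is precisely why the extension $\widetilde{H}_n$, rather than $H_n$ itself, is needed, and the careful accounting of the half-integral powers of $\epsilon$ carried by $\nu$ is the crux of the whole argument.
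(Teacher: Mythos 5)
Your proof is correct, and its skeleton matches the paper's: verify the defining relations on the images of the generators, multiply out $\Psi_M(T)^m\Psi_M(S)^r$ to obtain the closed form, and deduce bijectivity from injectivity plus finiteness. Two local differences are worth recording. First, by converting every central factor to a power of $\nu$ at once you get the uniform identity $\Psi_M(T)^m=T^{am}S^{cm}\nu^{m^2ac}$, so the order check $\Psi_M(T)^n=\nu^{n^2ac}=1$ reduces to a single divisibility statement ($2n\mid n^2ac$ for even $n$, $n\mid n^2ac$ for odd $n$); the paper instead writes $\Psi_M(T)^n=T^{an}S^{cn}\epsilon^{D}\nu^{acn}$ with $D=\tfrac12 n(n-1)ac$ and treats the two parities separately. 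Your bookkeeping is cleaner and handles both parities in one stroke. Second, for injectivity the paper notes $\ker(\Psi_M)\cap\langle\nu\rangle=\{1\}$ and appeals, without proof, to the fact that every non-trivial normal subgroup of $\widetilde{H}_n$ meets $\langle\nu\rangle$ non-trivially (a standard property of nilpotent groups, $\langle\nu\rangle$ being the center); you instead apply the short five lemma to the central extension $1\to\langle\nu\rangle\to\widetilde{H}_n\to\ZZ_n^2\to 1$, using that the induced map on the quotient is multiplication by $M$, which is invertible since $\det M=\pm1$ is a unit modulo $n$. Your version is more self-contained, and the data it needs --- that $\Psi_M$ preserves $\langle\nu\rangle$ and that the induced map on $\ZZ_n^2$ is $M$ --- falls out of the closed form you have already computed. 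Either route is legitimate, and your closing observation about why the correction factor $\nu^{ac}$ is forced when $n$ is even is precisely the point of introducing $\widetilde{H}_n$.
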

\begin{proof}
Since
\begin{align*}
\Psi_M(S)\Psi_M(T) & \;=\; T^b S^d \nu^{bd}  T^a S^c \nu^{ac} \;=\; T^{a+ b} S^{c+d}  \epsilon^{ad} \nu^{bd+ac} 
\qquad \text{and}
\\
\Psi_M(T)\Psi_M(S) & \;=\;  T^a S^c \nu^{ac} T^b S^d \nu^{bd}  \;=\; T^{a+ b} S^{c+d}  \epsilon^{bc} \nu^{bd+ac},
\end{align*}
we have $\Psi_M(S)\Psi_M(T)=\Psi_M(T)\Psi_M(S) \epsilon^{ad-bc} =\Psi_M(T)\Psi_M(S) \Psi_M(\epsilon)  $.

Furthermore,
\begin{equation*}
\Psi_M(T)^n  \;=\; ( T^a S^c \nu^{ac})^n \;=\;   T^{an} S^{cn} \epsilon^D  \nu^{acn}
\end{equation*}
where $D= \tfrac{1}{2} n(n-1)ac$. If $n$ is odd, then $D$ is an integer multiple of $n$ so $\epsilon^D=1$,
and $ \nu^{acn} = \epsilon^{acn(n+1)/2} =1$ so $\Psi_M(T)^n  =1$.
If $n$ is even, then $D= m(n-1)ac$ where $m=n/2$ and $\nu^{acn}=\epsilon^{acm}$, whence 
$ \epsilon^D  \nu^{acn} = \epsilon^{mnac}=1$; therefore $\Psi_M(T)^n =1$ when $n$ is even.
In conclusion, $\Psi_M(T)^n =1$ when $n$ is odd and when $n$ is even.
Similar calculations show that $\Psi_M(S)^n =1$  when $n$ is odd and when $n$ is even.

Hence $\Psi_M$ is a group homomorphism.  

Clearly, $\ker(\Psi_M) \cap \langle \nu \rangle = \{1\}$. However, every non-trivial normal subgroup of 
$\widetilde{H}_n$ has non-trivial intersection with $\langle \nu \rangle $ so we conclude that $\ker(\Psi_M) = \{1\}$. 
Hence $\Psi_M$ is an automorphism of $\widetilde{H}_n$. 

We have
\begin{align*}
\Psi_M(T^mS^r) & \;=\; (T^aS^c\nu^{ac})^m(T^bS^d\nu^{bd})^r 
\\
& \;=\; T^{am}S^{cm}\epsilon^X  T^{br}S^{dr}\epsilon^Y \nu^{acm+bdr}
\\
& \;=\; T^{am+br}S^{cm+dr}\epsilon^{X+Y+bcmr} \nu^{acm+bdr}
\end{align*}
where $X=\frac{1}{2}m(m-1)ac$ and $Y=\frac{1}{2}r(r-1)bd$. Since $\epsilon=\nu^{2}$, the last part of the lemma holds with
$Z=m(m-1)ac+r(r-1)bd+2bcmr+acm+bdr$. Hence the result.
\end{proof}

\begin{proposition}
\label{prop.aut.Hn.tilde}
Let $\nu =\epsilon^{1/2}$ when $n$ is even and let $\nu=\epsilon^{(n+1)/2}$ when $n$ is odd.
The map $\Psi:\SL(2,\ZZ) \to \Aut(\widetilde{H}_n)$ that sends $M={{a \; \,b} \choose {c \, \; d}} \in \SL(2,\ZZ)$ to
the automorphism
\begin{equation*}
\Psi_M  \, : \qquad T\mapsto T^a S^c \nu^{ac},  \qquad S\mapsto T^b S^d \nu^{bd},  \qquad \nu \mapsto \nu,
\end{equation*}
is a group homomorphism.
\end{proposition}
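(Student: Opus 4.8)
The plan is to prove directly that $\Psi_{MN}=\Psi_M\circ\Psi_N$ for all $M={{a\;b}\choose{c\;d}}$ and $N={{a'\;b'}\choose{c'\;d'}}$ in $\SL(2,\ZZ)$. \Cref{lem.automs.PsiM} already supplies each $\Psi_M$ as a bona fide automorphism of $\widetilde{H}_n$, and since $\det M=1$ its effect on $\nu$ is $\nu\mapsto\nu^{ad-bc}=\nu$, exactly as asserted. Because $\Psi_{MN}$ and $\Psi_M\circ\Psi_N$ are both endomorphisms of $\widetilde{H}_n$, to prove they coincide it suffices to check that they agree on the generating set $\{S,T,\nu\}$; and since every element of $\widetilde{H}_n$ has a unique normal form $T^mS^r\nu^e$, this reduces to comparing the exponents each side produces on each generator.

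First I would dispose of $\nu$: using $\Psi_M(\nu)=\nu^{\det M}$ one gets $\Psi_M(\Psi_N(\nu))=\Psi_M(\nu^{\det N})=\nu^{\det M\det N}=\nu^{\det(MN)}=\Psi_{MN}(\nu)$, so the twist on $\nu$ is multiplicative for any matrices. For $T$, I would feed $\Psi_N(T)=T^{a'}S^{c'}\nu^{a'c'}$ into $\Psi_M$ and apply the closed formula $\Psi_M(T^mS^r)=T^{am+br}S^{cm+dr}\nu^{acm^2+bdr^2+2bcmr}$ of \Cref{lem.automs.PsiM} with $(m,r)=(a',c')$, together with $\Psi_M(\nu)=\nu$. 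This gives $T^{aa'+bc'}S^{ca'+dc'}\nu^{E}$ with $E=ac(a')^2+bd(c')^2+2bc\,a'c'+a'c'$, whereas reading off the first column of $MN$ gives $\Psi_{MN}(T)=T^{aa'+bc'}S^{ca'+dc'}\nu^{(aa'+bc')(ca'+dc')}$. The $T$- and $S$-exponents match identically, and expanding the two $\nu$-exponents their difference is exactly $a'c'(ad-bc-1)=a'c'(\det M-1)=0$. The computation for $S$ is identical with $(m,r)=(b',d')$ and the second column of $MN$, and leaves a $\nu$-exponent discrepancy of $b'd'(\det M-1)=0$.

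The one delicate point is the bookkeeping of the central $\nu$-exponent, where the quadratic term $Z$ of \Cref{lem.automs.PsiM} interacts with the off-diagonal entries of $M$ and $N$; the force of the construction is that the correction factors $\nu^{ac}$ and $\nu^{bd}$ attached to the images of $T$ and $S$ are exactly what is needed for this exponent to transform correctly, so that the residual discrepancy comes out proportional to $\det M-1$ and hence vanishes on $\SL(2,\ZZ)$. This is also the conceptual reason the naive lift $\Psi_M'$ of \Cref{prop.GL2.non-action} fails to be a homomorphism on $\GL(2,\ZZ)$: there $\det M$ can equal $-1$. Since all three generator checks succeed, $\Psi_{MN}$ and $\Psi_M\circ\Psi_N$ agree on $\widetilde{H}_n$, so $\Psi$ is a group homomorphism.
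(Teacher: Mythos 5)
Your proof is correct and takes essentially the same route as the paper: both verify the homomorphism property by direct computation on the generators $T$, $S$, $\nu$, with the whole matter coming down to bookkeeping of the central $\nu$-exponent, where $\det M=1$ kills the discrepancy. The only (cosmetic) difference is that you invoke the closed formula $\Psi_M(T^mS^r)=T^{am+br}S^{cm+dr}\nu^{acm^2+bdr^2+2bcmr}$ from \Cref{lem.automs.PsiM}, which makes the discrepancy come out cleanly as $a'c'(\det M-1)$ (resp.\ $b'd'(\det M-1)$), whereas the paper re-expands the products $(T^{a'}S^{c'}\nu^{a'c'})^a(T^{b'}S^{d'}\nu^{b'd'})^c\nu^{ac}$ from scratch and matches the resulting $\epsilon$-factors against $\nu^D$.
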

\begin{proof}
Let $N={{a' \; \,b'} \choose {c' \, \; d'}} \in \SL(2,\ZZ)$. 
Then
\begin{align*}
(\Psi_N \Psi_M)(T) & \; = \; (T^{a'} S^{c'} \nu^{a'c'})^a\, (T^{b'} S^{d'} \nu^{b'd'})^c \nu^{ac}
\\
& \; = \; T^{a'a} S^{c'a} \epsilon^A \nu^{a'c'a}\, T^{b'c} S^{d'c}  \epsilon^B \nu^{b'd'c} \nu^{ac}
\\
& \; = \; T^{a'a+b'c} S^{c'a+d'c} \epsilon^{c'ab'c} \epsilon^{A+B} \nu^{a'c'a+b'd'c+ac} \qquad \text{and}
\\
\Psi_{NM}(T) & \; = \;  T^{a'a+b'c} S^{c'a+d'c} \nu^{(aa'+b'c)(c'a+d'c)}
\end{align*}
where $A=\frac{1}{2}a(a-1)a'c'$ and $B=\frac{1}{2}c(c-1)b'd'$.
We have  
\begin{equation*}
\nu^{(aa'+b'c)(c'a+d'c)}\nu^{-a'c'a-b'd'c-ac} \; = \; \nu^D
\end{equation*}
where 
\begin{align*}
D & \; = \; a'c'a^2+a'd'ac +b'c'ac+b'd' c^2 -a'c'a-b'd'c-ac
\\
& \;=\; ac(a'd'+b'c'-1) +a'c'(a^2-a) + b'd'(c^2-c)
\\
& \;=\; 2acb'c' +a(a-1)a'c'+ c(c-1)b'd' 
\\
& \;=\; 2(acb'c' +A+B).
\end{align*}
Since $\nu^2=\epsilon$, $\nu^D=\epsilon^{acb'c'+A+B}$. Hence $(\Psi_N \Psi_M)(T) =\Psi_{NM}(T)$.
A similar calculation shows that  $(\Psi_N \Psi_M)(S) =\Psi_{NM}(S)$.
Hence $\Psi_N \Psi_M =\Psi_{NM}$, so $\Psi$ is a group homomorphism.
\end{proof}

\subsubsection{Remark}
There is another way to obtain the homomorphism $\Psi:\SL(2,\ZZ) \to \Aut(\widetilde{H}_n)$. 

By  \cite[\S 1.5.3]{trees} or \cite[Prop.~2.1]{ccs}, for example, $\SL(2,\bZ)$ is the amalgamated free product 
$ \bZ_4*_{\bZ_2}\bZ_6$  with $\bZ_4$ and $\bZ_6$ generated by 
\begin{equation}\label{eq:gens}
  X \; := \; \begin{pmatrix}
    \phantom{-}0 & 1 \\
    -1 & 0
  \end{pmatrix}
\qquad   \text{ and } \qquad
 Y \; := \;\begin{pmatrix}
    \phantom{-}1 & 1 \\
    -1 & 0
  \end{pmatrix},
\end{equation}
respectively.
Independently of \Cref{lem.automs.PsiM}, it is easy to check that the formulas
\begin{align*}
\Psi_X  \, &  : \qquad T\mapsto S^{-1},  \qquad S\mapsto T,  \qquad \nu \mapsto \nu,
\\
\Psi_Y \, &  : \qquad T\mapsto T S^{-1}\nu^{-1},  \qquad S\mapsto T,  \qquad \nu \mapsto \nu,
\end{align*}
(where $\nu=\epsilon^{1/2}$) extend to automorphisms of $\widetilde{H}_n$.
Since  
\begin{equation*}
\Psi_X^2 \, = \, \Psi_Y^3 \,  : \qquad T\mapsto T^{-1},  \qquad S\mapsto S^{-1},  \qquad \nu \mapsto \nu,
\end{equation*}
$\Psi_X^4  =  \Psi_Y^6 = \id$. Hence there is a unique group homomorphism 
$\Psi:\bZ_4*_{\bZ_2}\bZ_6 = \langle X \rangle*_{\bZ_2} \langle Y \rangle \to \Aut{H}_n$
with the property that $\Psi(X)=\Psi_X$ and $\Psi(Y)=\Psi_Y$. 

The drawback to this alternative approach is that some additional calculation is needed to produce a formula for 
$\Psi(M)$ for all $M \in \SL(2,\ZZ)$.

\subsubsection{Remark}
\label{ssect.Z.vs.Zn}
In \Cref{prop.aut.Hn.tilde}, the automorphism $\Psi_M$ only depends on the image of $M$ in $\SL(2,\ZZ_n)$
because $T^m$ and $S^m$ only depend on $m$ modulo $n$. 

\subsubsection{Notation}
If $M \in \SL(2,\ZZ)$ and $x \in \widetilde{H}_n$ we will use the notation 
\begin{equation*}
M\triangleright x  \; := \;  \Psi_M(x).
\end{equation*}

\subsubsection{}
A nice feature of the action of $\SL(2,\ZZ)$ on $\widetilde{H}_n$  in \cref{prop.aut.Hn.tilde} 
 is the following: if we define $J_{(u,v)}:=T^uS^v \in \widetilde{H}_n$, then in $ \widetilde{H}_n/\langle \epsilon^{1/2} \rangle \cong
 \ZZ^2_n$ we have $M \triangleright J_{(u,v)} = J_{(u,v)M^t}$.

\subsection{Extending representations of $H_n$ to representations of $\widetilde{H}_n$}
It is well-known that the $n$-dimensional irreducible representations of $H_n$ are determined up to isomorphism 
by their central characters; see, for example, \cite[\S 2]{gh-heis}. 

For a moment, let $\rho:H_n \to \GL(n,\CC)$ be any irreducible representation of $H_n$ on $\CC^n$.
Because $\epsilon$ has order $n$ it acts on $\CC^n$ as multiplication
by an $n^{\rm th}$ root of unity, $\zeta$ say.

When $n$ is even $\rho$ can be extended to a representation $\widetilde{\rho}$ of $\widetilde{H}_n$  
by having $\epsilon^{1/2}$ act by a chosen square root of $\zeta$.
By \cite[Prop.~5.1]{fh-rt}, for example, there are exactly two irreducible $\widetilde{H}_n$-representations $\widetilde{\rho}$ such that
$\widetilde{\rho}\,|_{H_n} \cong \rho$, 
namely  $\widetilde{\rho}$ and   $\widetilde{\rho} \otimes \varepsilon$,
where $\varepsilon$ is the non-trivial 1-dimensional character of $\widetilde{H}_n$ that sends $H_n$ to 1. 
In particular, such a representation of $\widetilde{H}_n$ is also determined by its central character, i.e., 
the scalar by which the generator $\epsilon^{1/2}$ of the center acts. The next statement follows.

\begin{proposition}
\label{prop.rho.V}
Let $\rho:H_n \to \GL(n,\bC)$ be any $n$-dimensional irreducible representation of $H_n$ on which $\epsilon$
acts as multiplication by $\zeta$. Fix a square root, $\zeta^{1/2}$, of $\zeta$.
Extend $\rho$ to a group homomorphism $\widetilde{\rho}:\widetilde{H}_n \to \GL(n,\bC)$ by declaring that
\begin{equation*}
\widetilde{\rho}\big(T^aS^b \epsilon^{c/2}\big) \; :=\; \rho(T)^a\rho(S)^b (\zeta^{1/2})^c.
\end{equation*} 
If $\phi \in \Aut(\widetilde{H}_n)$  acts as the identity on $\epsilon^{1/2}$, then the representations 
$\widetilde{\rho}\phi$ and $\widetilde{\rho}$ are isomorphic.
\end{proposition}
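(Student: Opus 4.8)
The plan is to deduce the isomorphism from the classification—recalled just above—that an $n$-dimensional irreducible representation of $\widetilde{H}_n$ is determined up to isomorphism by the scalar through which the central generator $\epsilon^{1/2}$ acts. The whole argument rests on two elementary observations: precomposing an irreducible representation with an automorphism again yields an irreducible representation of the same dimension, and the hypothesis $\phi(\epsilon^{1/2})=\epsilon^{1/2}$ forces the central character to be preserved.

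First I would record that $\widetilde{\rho}$ itself is irreducible: by construction $\widetilde{\rho}|_{H_n}=\rho$, which is irreducible by hypothesis, so no proper nonzero subspace of $\CC^n$ can be $\widetilde{H}_n$-invariant. Next I would check that $\widetilde{\rho}\phi$ is again an $n$-dimensional irreducible representation of $\widetilde{H}_n$. It acts on $\CC^n$, so it is $n$-dimensional; and because $\phi$ is surjective, a subspace $W\subseteq\CC^n$ satisfies $\widetilde{\rho}(\phi(g))W\subseteq W$ for all $g\in\widetilde{H}_n$ exactly when $\widetilde{\rho}(h)W\subseteq W$ for all $h\in\widetilde{H}_n$, i.e.\ exactly when $W$ is $\widetilde{\rho}$-invariant. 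Hence the $\widetilde{\rho}\phi$-invariant subspaces are $0$ and $\CC^n$, so $\widetilde{\rho}\phi$ is irreducible.

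I would then compute the central character of $\widetilde{\rho}\phi$. Since $\phi$ fixes $\epsilon^{1/2}$,
\[
(\widetilde{\rho}\phi)(\epsilon^{1/2})=\widetilde{\rho}\big(\phi(\epsilon^{1/2})\big)=\widetilde{\rho}(\epsilon^{1/2})=\zeta^{1/2}\cdot\id,
\]
so $\widetilde{\rho}\phi$ and $\widetilde{\rho}$ have the same central character, and by the recalled classification they are therefore isomorphic. I do not expect a genuine obstacle: the argument is short and mechanical once the classification is in hand. The only point requiring a moment's care is confirming that $\widetilde{\rho}\phi$ really lies in the class to which the classification applies—that it is $n$-dimensional and irreducible with $\epsilon^{1/2}$ acting by a scalar—which is precisely what the first two steps establish; note in particular that one never needs $\phi$ to preserve the subgroup $H_n$.
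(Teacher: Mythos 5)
Your proof is correct and is essentially the paper's own argument: the paper treats the proposition as an immediate consequence (``The next statement follows'') of the classification recalled just before it---that these irreducible representations of $\widetilde{H}_n$ are determined up to isomorphism by the scalar through which the central generator $\epsilon^{1/2}$ acts---and your three steps (irreducibility of $\widetilde{\rho}$, irreducibility of $\widetilde{\rho}\phi$ via surjectivity of $\phi$, equality of the central characters since $\phi(\epsilon^{1/2})=\epsilon^{1/2}$) simply make that deduction explicit. The one hair worth noting is that the classification the paper literally states concerns irreducibles whose restriction to $H_n$ is isomorphic to $\rho$, whereas you invoke it for arbitrary $n$-dimensional irreducibles of $\widetilde{H}_n$; these are easily reconciled (any irreducible of $\widetilde{H}_n$ restricts irreducibly to $H_n$ because $\widetilde{H}_n=H_n\cdot\langle\epsilon^{1/2}\rangle$ with $\epsilon^{1/2}$ acting as a scalar, so an $n$-dimensional one restricts to the unique irreducible of $H_n$ with central character $\zeta$, and an $H_n$-isomorphism is then automatically $\widetilde{H}_n$-equivariant), so this is not a gap---and your closing remark that $\phi$ need not preserve $H_n$ is exactly the right point to flag, since the automorphisms $\Psi_M$ to which the proposition is later applied do not preserve $H_n$ in general.
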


\begin{corollary}
\label{cor.psi}
Let $\rho:H_n \to \GL(n,\bC)$ be any $n$-dimensional irreducible representation of $H_n$ and $\widetilde{\rho}:H_n \to \GL(n,\bC)$
an extension of it to $\widetilde{H}_n$. For $M \in \SL(2,\ZZ)$, let $\Psi_M \in \Aut(\widetilde{H}_n)$ be the automorphism in 
\Cref{prop.aut.Hn.tilde}.  There is a set map---not a group homomorphism, in general--- 
\begin{equation}
\label{eq:defn.psi}
\psi: \SL(2,\ZZ)\to \GL(V)
\end{equation}
 such that 
\begin{equation}
\label{eq:13-}
\widetilde{\rho}(\Psi_M(x)) \;= \; \psi(M) \, \widetilde{\rho}( x) \, \psi(M)^{-1}
\end {equation}
for all $x \in \widetilde{H}_n$. 
\end{corollary}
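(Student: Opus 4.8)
The plan is to deduce the statement almost immediately from \Cref{prop.rho.V}, whose hypothesis is tailor-made for this purpose. The first thing I would check is that the automorphism $\Psi_M$ produced by \Cref{prop.aut.Hn.tilde} fixes $\epsilon^{1/2}$: indeed, writing $\nu=\epsilon^{1/2}$, the formula there sends $\nu \mapsto \nu$. This is precisely the condition ``$\phi$ acts as the identity on $\epsilon^{1/2}$'' required to apply \Cref{prop.rho.V} with $\phi=\Psi_M$. That proposition then yields an isomorphism of $\widetilde{H}_n$-representations $\widetilde{\rho}\,\Psi_M \cong \widetilde{\rho}$.

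Next I would unwind what this isomorphism means at the level of matrices. By definition of an isomorphism of representations, there is an invertible operator, which I name $\psi(M)\in\GL(V)$, intertwining the two sides, i.e.\ satisfying
\begin{equation*}
\widetilde{\rho}\big(\Psi_M(x)\big) \;=\; \psi(M)\,\widetilde{\rho}(x)\,\psi(M)^{-1}
\end{equation*}
for all $x\in\widetilde{H}_n$; this is exactly \Cref{eq:13-}. Carrying this out for every $M\in\SL(2,\ZZ)$ and selecting one intertwiner for each $M$ defines the desired set map $\psi\colon\SL(2,\ZZ)\to\GL(V)$.

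Finally I would explain why $\psi$ is in general only a set map and not a homomorphism. The representation $\widetilde{\rho}$ is irreducible: when $n$ is odd we have $\widetilde{H}_n=H_n$ and $\widetilde{\rho}=\rho$, while when $n$ is even the extension is irreducible as recalled in the discussion preceding \Cref{prop.rho.V}. Hence, by Schur's lemma, the intertwiner $\psi(M)$ is uniquely determined only up to a nonzero scalar. Consequently, although $\Psi$ itself is a genuine group homomorphism into $\Aut(\widetilde{H}_n)$, for $M,N\in\SL(2,\ZZ)$ both $\psi(MN)$ and $\psi(M)\psi(N)$ intertwine $\widetilde{\rho}$ with $\widetilde{\rho}\,\Psi_M\Psi_N=\widetilde{\rho}\,\Psi_{MN}$, so they can differ by a scalar; the lift $\psi$ is thus a projective representation rather than an ordinary one.

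There is no genuinely hard step here: the entire substance is carried by \Cref{prop.rho.V}, and the only verification needed is the fixed-point condition $\Psi_M(\epsilon^{1/2})=\epsilon^{1/2}$, which is read off directly from \Cref{prop.aut.Hn.tilde}. The one point deserving mild care is the scalar indeterminacy flagged above, and it is precisely this ambiguity that forces the conclusion to be phrased for a set map rather than a group homomorphism.
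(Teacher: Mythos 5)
Your proof is correct and takes essentially the same route as the paper: the paper's entire proof is the one-line observation that $\Psi_M(\epsilon^{1/2})=\epsilon^{1/2}$, so \Cref{prop.rho.V} applies with $\phi=\Psi_M$, and an intertwiner realizing the resulting isomorphism $\widetilde{\rho}\,\Psi_M\cong\widetilde{\rho}$ is chosen as $\psi(M)$ for each $M$. Your closing discussion, using Schur's lemma to explain that the intertwiners are only determined up to scalars and that $\psi$ is therefore merely a projective lift, is a correct elaboration of the parenthetical ``not a group homomorphism, in general,'' which the paper asserts without further comment.
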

\begin{proof}
This follows from the previous proposition because $\Psi_M(\epsilon^{1/2}) = \epsilon^{1/2}$.  
\end{proof}

\subsubsection{The notation $\rho$}
From now on we will use the notation $\rho:\widetilde{H}_n \to \GL(V)$ for the representation of $\widetilde{H}_n$ 
in \cref{prop.qnk.act}, and also for its linear extension $\CC \widetilde{H}_n \to \End_\CC(V)$ to the group algebra.
We will therefore write \Cref{eq:13-} as
\begin{equation}\label{eq:13}
\rho(  M\triangleright x) \; = \;  \psi(M)\, \rho(x) \, \psi(M)^{-1}.
 \end{equation}

\subsubsection{Remark}
\label{ssect.Z.vs.Zn.2}
In analogy with the remark in \cref{ssect.Z.vs.Zn}, the automorphism $\psi(M):V \to V$ depends only on the image of $M$ in $\SL(2,\ZZ_n)$.
For example, if the image of $M$ in $\SL(2,\ZZ_n)$ is the identity, then $\Psi_M=\id_{\widetilde{H}_n}$ so, in \Cref{cor.psi}, we can take
$\psi(M)=\id_V$.

\subsection{The algebra  $A_n$ and the action of $\SL(2,\bZ)$ on it} 
\label{re:realact}
Let
\begin{equation*}
A_n \; := \; \bC \widetilde{H}_n\otimes_{\bC\langle \epsilon^{1/2}\rangle} \bC \widetilde{H}_n,
\end{equation*}
the tensor square of the group algebra $ \bC \widetilde{H}_n$ over the group algebra of its center, $\langle \epsilon^{1/2}\rangle$.
The action of $\SL(2,\ZZ)$ on $\widetilde{H}_n$ extends to an action of $\SL(2,\ZZ)$ on
$\CC\widetilde{H}_n$ and, because elements of $\SL(2,\ZZ)$ act as the identity on the center of $\widetilde{H}_n$, there is an
induced action of  $\SL(2,\ZZ)$ on $A_n$, namely  $M  \triangleright (x \otimes y) =(M \triangleright x) \otimes (M \triangleright y)$.

%%%%%%%%%%%%%%%%%%%%%%%%%%%%%%%%%%%%%%%%%%%%%%%%%%%%%%%%%%%%%%%%%%%
%%%%%%%%%%%%%%%%%%%%%%%%%%%%%%%%%%%%%%%%%%%%%%%%%%%%%%%%%%%%%%%%%%%
\section{The main results}\label{se:main}

\subsection{Introduction}
Let $g,h \in \GL(V)$ be the linear operators
\begin{equation*}
g\cdot x_i \; :=\; \omega^i x_i \phantom{xxxxxxxxxxxxx} h\cdot x_i\;:=\; x_{i-1}
\end{equation*}
where $\omega:=e\big(\frac{1}{n}\big)$.
The map $S\mapsto g$, $T \mapsto h$, extends to an irreducible representation of $H_n$ on $V$ 
(essentially because if $m$ is an integer, then $g^m$ and $h^m$ depend only on $m$ modulo $n$)
 in which $\epsilon$ acts as multiplication by $\omega^{-1}$. 
For $(a,b)\in \bZ_n^2$, we define the linear operator
\begin{equation*}
  I_{a,b} \; := \; h^ag^b  \, :V \to V,
\end{equation*} 
and its ``universal'' version
\begin{equation*}
J_{a,b} \; := \;J_{(a,b)}  \, := \, T^aS^b \, \in  \,\bC \widetilde{H}_n.
\end{equation*} 

\subsubsection{}
We note that $M \triangleright J_{(a,b)}= c \, J_{(a,b)M^t}$ where $c$ is some element in the center of $\widetilde{H}_n$.
Thus, in $A_n$ we have the useful equality
\begin{equation}
\label{eq:important}
M \triangleright \big(J_{(a,b)} \otimes J_{(a,b)}^{-1} \big) \;=\; \, J_{(a,b)M^t}  \otimes  J_{(a,b)M^t}^{-1}.
\end{equation}
This is the reason we treat $J_{(a,b)} \otimes J_{(a,b)}^{-1}$ as an element in $A_n$ rather than
$\bC \widetilde{H}_n \otimes_\bC \bC\widetilde{H}_n$ when we define $L_k(z,\eta \, | \, \tau)$ in \Cref{eq:linbn} below.

\subsubsection{}
Let   $T_k(z,\eta \, | \, \tau):V^{\otimes 2} \to V^{\otimes 2}$ 
be the linear operator
\begin{equation}
\label{defn.T(-nz)}
  T_{k}(z,\eta \, | \, \tau) := \sum_{(u,v)\in \bZ_n^2}w_{(u,v)}(-nz,\eta \, | \, \tau)I_{-k'u,v}\otimes I_{-k'u,v}^{-1}
\end{equation}
where $ k'  := \text{the unique integer such that $n>k' \ge 1$ and  $kk'=1\;\mathrm{mod}\; n$}$.

\subsubsection{}
Let $P:V \otimes V \to V \otimes V$ be the linear map $P(x \otimes y)=y\otimes x$.

\begin{theorem}
 \cite[Prop.~3.5]{CKS4}
The images of $R_{n,k}(z,\eta\, | \, \tau)$ and $PT_{k}(z,\eta \, | \, \tau)$ are the same because
\begin{equation}\label{eq:tisr}
R_{n,k}(z,\eta\, | \, \tau)  \; =\; \tfrac{1}{n} e(-\tfrac{1}{2}n(n+1)z) \, P \,T_k(z,\eta \, | \, \tau).
\end{equation}
Hence 
\begin{equation*}
Q_{n,k}(\eta \, | \, \tau) \; = \; \frac{TV}{(\textnormal{the image of $ P\, T_{k}(\eta,\eta \, | \, \tau)$})} \, .
\end{equation*}
\end{theorem}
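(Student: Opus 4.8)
The plan is to establish the operator identity \eqref{eq:tisr} for all $z$, after which the presentation of $Q_{n,k}(\eta\,|\,\tau)$ is immediate: the prefactor $\tfrac1n e(-\tfrac12 n(n+1)z)$ is a nowhere-vanishing entire function and $P\in\GL(V^{\otimes 2})$, so $R_{n,k}(z,\eta\,|\,\tau)$ and $P\,T_k(z,\eta\,|\,\tau)$ have equal image for every $z$; specializing to $z=\eta$ and applying \eqref{defn.Q.eta.tau} gives $Q_{n,k}(\eta\,|\,\tau)=TV/(\text{image of }P\,T_k(\eta,\eta\,|\,\tau))$. Since $T_k$ is manifestly holomorphic in $z$ (the $w_{(u,v)}$ are entire in $z$), the identity also furnishes the holomorphic extension of $R_{n,k}$ across $z=\eta\in\tfrac1n\Lambda_\tau$; it therefore suffices to prove \eqref{eq:tisr} for generic $z$ and pass to the limit.

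I would prove \eqref{eq:tisr} by comparing matrix coefficients in the basis $\{x_i\otimes x_j\}$. Using $I_{a,b}x_i=\omega^{bi}x_{i-a}$ and $I_{a,b}^{-1}x_j=\omega^{-b(j+a)}x_{j+a}$, a direct calculation gives
\[
(I_{-k'u,v}\otimes I_{-k'u,v}^{-1})(x_i\otimes x_j)=\omega^{v(i-j+k'u)}\,x_{i+k'u}\otimes x_{j-k'u},
\]
so that, after applying $P$ and reindexing by $r:=k'u$ (a bijection of $\ZZ_n$ since $\gcd(k',n)=1$, with inverse $u=kr$),
\[
P\,T_k(z,\eta\,|\,\tau)(x_i\otimes x_j)
=\sum_{r\in\ZZ_n}\Bigg(\sum_{v\in\ZZ_n}\omega^{v(i-j+r)}\,w_{(kr,v)}(-nz,\eta\,|\,\tau)\Bigg)\,x_{j-r}\otimes x_{i+r}.
\]
Comparing with \eqref{the.relns}, and writing $m:=j-i$ (both sides depend on $i,j$ only through $m$, since $i-j+r=r-m$), the operator identity \eqref{eq:tisr} is equivalent to the family of scalar identities
\[
\tfrac{1}{n}\,e\big(-\tfrac{1}{2}n(n+1)z\big)\sum_{v\in\ZZ_n}\omega^{v(r-m)}\,w_{(kr,v)}(-nz,\eta\,|\,\tau)
=\frac{\theta_0(-z)\cdots\theta_{n-1}(-z)}{\theta_1(0)\cdots\theta_{n-1}(0)}\,
\frac{\theta_{m+r(k-1)}(-z+\eta)}{\theta_{m-r}(-z)\,\theta_{kr}(\eta)},
\]
one for each $(m,r)\in\ZZ_n^2$.

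To prove this scalar identity I would fix $(m,r)$ and treat both sides as functions of $z$. The left side is holomorphic, being a finite sum of holomorphic functions; on the right, the factor $\theta_{m-r}(-z)$ in the denominator is cancelled by the corresponding factor $\theta_{m-r}(-z)$ in the numerator product $\theta_0(-z)\cdots\theta_{n-1}(-z)$, so the right side is holomorphic as well. I would then show the two sides are quasi-periodic with respect to $\Lambda_\tau$ with the same factor of automorphy and the same zeros counted with multiplicity, and conclude they are proportional, since their ratio is an $\Lambda_\tau$-elliptic function with neither zeros nor poles and hence constant. The quasi-periodicity and zero data for $w_{(kr,v)}(-nz)$ come from \Cref{prop.wuv.properties} (combined with the scaling $z\mapsto -nz$), and those for the factors $\theta_a(-z)$, $\theta_{m+r(k-1)}(-z+\eta)$, $\theta_{kr}(\eta)$ come from \Cref{prop.theta.uv} together with the defining quasi-periodicity of the basis $\theta_0,\dots,\theta_{n-1}$ of $\Theta_n(\Lambda_\tau)$. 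The constant would be fixed by specializing to $z=0$: since $w_{(u,v)}(0)=1$, character orthogonality collapses the $v$-sum to $n\,\delta_{m,r}$, so the left side equals $\delta_{m,r}$, in agreement with $R_{n,k}(0,\eta\,|\,\tau)=\mathrm{id}$ on the right (equivalently $T_k(0,\eta\,|\,\tau)=nP$, which one checks directly from \eqref{defn.T(-nz)}).

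The main obstacle is this last scalar identity, and within it two points demand care. The first is the bookkeeping relating the Feigin--Odesskii basis $\theta_0,\dots,\theta_{n-1}$ of $\Theta_n(\Lambda_\tau)$ appearing in \eqref{the.relns} to the theta functions with characteristics underlying $w_{(kr,v)}$ in \eqref{eq:6}; this is precisely where the finite Fourier transform in $v$ interacts with the scaling $z\mapsto -nz$, and it is the crux of the factor-of-automorphy comparison. The second is tracking the constant in the off-diagonal cases $m\neq r$, where both sides vanish at $z=0$ (the vanishing of $\theta_0(0)$, which is exactly why $\theta_0(0)$ is omitted from the normalizing product $\theta_1(0)\cdots\theta_{n-1}(0)$, forces the right side to vanish there as well); for these the constant must be pinned down by a secondary evaluation or by comparing derivatives at the common simple zero. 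The zero-matching itself is mechanical via \Cref{prop.theta.uv} and \Cref{prop.wuv.properties}; it is the factor-of-automorphy computation, complicated by the $-nz$ argument and the passage between the two descriptions of the theta data, that requires the real work.
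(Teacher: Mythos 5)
Your first step---the matrix-coefficient computation, the reindexing $r=k'u$, and the reduction of \eqref{eq:tisr} to the family of scalar identities
\begin{equation*}
\tfrac{1}{n}\,e\big(-\tfrac{1}{2}n(n+1)z\big)\sum_{v\in\ZZ_n}\omega^{v(r-m)}\,w_{(kr,v)}(-nz,\eta\,|\,\tau)
\;=\;\frac{\theta_0(-z)\cdots\theta_{n-1}(-z)}{\theta_1(0)\cdots\theta_{n-1}(0)}\,
\frac{\theta_{m+r(k-1)}(-z+\eta)}{\theta_{m-r}(-z)\,\theta_{kr}(\eta)}
\end{equation*}
is correct, as is the $z=0$ consistency check. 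But be aware that the paper offers no proof of this theorem at all: it is quoted from \cite[Prop.~3.5]{CKS4}, and the scalar identity you have isolated \emph{is} the entire content of that cited result (which in \cite{CKS4} rests on Richey--Tracy-type theta identities, cf.\ \cite{ric-tra}). So your reduction reproduces the elementary linear-algebra layer, and everything that remains unproven in your proposal is exactly the substance being cited.

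The genuine gap is in your plan for that identity. The factor-of-automorphy comparison does work (both sides are invariant under $z\mapsto z+1$, and both pick up the factor $(-1)^n e(-n^2z+n\eta-n^2\tau)$ under $z\mapsto z+\tau$), but your claim that the zero-matching is ``mechanical via \Cref{prop.theta.uv} and \Cref{prop.wuv.properties}'' is not tenable: those propositions locate the zeros of each \emph{individual} $w_{(kr,v)}(-nz)$, whereas the left-hand side is the Fourier sum $\sum_v\omega^{v(r-m)}w_{(kr,v)}(-nz)$, and the zeros of a sum of theta functions are not determined termwise. This matters quantitatively: both sides are theta functions of order $n^2$ for $\Lambda_\tau$, and the space of such functions with a fixed factor of automorphy has dimension $n^2$, so equality of automorphy factors alone is very far from forcing proportionality---you would need to locate all $n^2$ zeros of the sum, which your cited propositions cannot do. (The device that actually pins such a Fourier sum down to a one-dimensional space is its finer quasi-periodicity under $z\mapsto z+\tfrac1n$ and $z\mapsto z+\tfrac\tau n$, i.e.\ the Heisenberg equivariance, not zero counting over $\Lambda_\tau$.) On top of this, you acknowledge that for $m\neq r$ both sides vanish at $z=0$, so your normalization step fails precisely in the off-diagonal cases and the ``secondary evaluation or derivative comparison'' needed to fix the constant is never carried out. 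Until the Fourier-sum identity is actually proven---or simply cited from \cite[Prop.~3.5]{CKS4}, as the paper does---the proposal does not establish \eqref{eq:tisr}.
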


The  only difference between the operator $R_{n,k}(z,\eta\, | \, \tau)$ defined in  \cref{the.relns} and 
$\tfrac{1}{n} e(-\tfrac{1}{2}n(n+1)z)PT_{k}(z,\eta \, | \, \tau)$ defined via \cref{defn.T(-nz)}  is that they are 
the matrix representations of the same linear operator with respect to different bases for $\End_\CC(V)$. 
The advantage of $T_{k}(z,\eta \, | \, \tau)$ over $R_{n,k}(z,\eta \, | \, \tau)$ is that we can exploit the way in which the actions of 
$\widetilde{H}_n$ and $A_n$ on $V^{\otimes 2}$ interact with the actions of $\SL(2,\ZZ)$ on $\widetilde{H}_n$ and the functions $w_{(u,v)}(z,\eta \, | \, \tau)$.\footnote{We learned this from Richey and Tracy's paper \cite{ric-tra}.}
To determine how $Q_{n,k}(\eta \, | \, \tau)$ transforms when $\SL(2,\ZZ)$ acts on $(\eta \, |\, \tau)$ we will
determine how $T_{k}(z,\eta \, | \, \tau)$ transforms under the action of $\SL(2,\ZZ)$ on $(z, \eta \, |\, \tau) \in \CC \times \CC \times \HH$.

%%%%%%%%%%%%%%%%%%%%%%%%%%%%%%%%%%%%%%%%%%%%%%%%%%%%%%%%%%%%%%%%%%%%%%%%%%%%%
\subsection{$\SL(2,\ZZ)$-equivariance properties of $T_{k}(z,\eta \, | \, \tau)$ and $L_{k}(z,\eta \, | \, \tau)$}\label{subse:opval}

Following \cite[Thm.~3.2]{CKS4} and the discussion before it, we will determine how $T_{k}(z,\eta \, | \, \tau)$ transforms under 
the action of 
$\SL(2,\ZZ)$ by examining the action of $\SL(2,\ZZ)$ on the ``universal'' version of $T_k(z,\eta \, | \, \tau)$ which belongs to the algebra 
$A_n$. That universal version is
\begin{equation}\label{eq:linbn}
  L_k(z,\eta \, | \, \tau)  \; :=\;  \sum_{(u,v)\in \bZ_n^2}w_{(u,v)}(-nz,\eta \, | \, \tau)J_{-k'u,v}\otimes J_{-k'u,v}^{-1} \; \in \, A_n.  
\end{equation}
The next ``result'' formalizes the statement that  $L_k(z,\eta \, | \, \tau)$ is the universal version of $T_k(z,\eta \, | \, \tau)$.

\begin{lemma}
If $\rho: \bC\widetilde{H}_n\to \End(V)$ is the representation in \Cref{prop.qnk.act}, then
\begin{equation*}
T_k(z,\eta \, | \, \tau) \; = \;  (\rho \otimes \rho)\big(L_k(z,\eta \, | \, \tau)\big).
\end{equation*} 
\end{lemma}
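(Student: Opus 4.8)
The plan is to prove the identity by unwinding the definitions one summand at a time; the only point requiring genuine care is that $\rho\otimes\rho$ is well-defined on $A_n$. First I would check that $\rho\otimes\rho$ descends from $\CC\widetilde{H}_n\otimes_{\CC}\CC\widetilde{H}_n$ to the relative tensor product $A_n=\CC\widetilde{H}_n\otimes_{\CC\langle\epsilon^{1/2}\rangle}\CC\widetilde{H}_n$, the algebra in which $L_k(z,\eta \, | \, \tau)$ actually lives. Since $V$ is an irreducible $\widetilde{H}_n$-module, the central element $\epsilon^{1/2}$ acts on it as a scalar, so $\rho(x\epsilon^{1/2})\otimes\rho(y)=\rho(x)\otimes\rho(\epsilon^{1/2}y)$ for all $x,y\in\CC\widetilde{H}_n$; hence the kernel of the canonical surjection $\CC\widetilde{H}_n\otimes_{\CC}\CC\widetilde{H}_n\twoheadrightarrow A_n$ is annihilated by $\rho\otimes\rho$, and $(\rho\otimes\rho)\big(L_k(z,\eta \, | \, \tau)\big)$ is a well-defined element of $\End(V)\otimes\End(V)=\End(V^{\otimes2})$. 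This is precisely why $L_k$ was placed in $A_n$ rather than in $\CC\widetilde{H}_n\otimes_{\CC}\CC\widetilde{H}_n$; compare \cref{eq:important}.

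Next I would apply $\rho\otimes\rho$ to
\begin{equation*}
L_k(z,\eta \, | \, \tau)=\sum_{(u,v)\in\bZ_n^2}w_{(u,v)}(-nz,\eta \, | \, \tau)\,J_{-k'u,v}\otimes J_{-k'u,v}^{-1}
\end{equation*}
termwise. The coefficients $w_{(u,v)}(-nz,\eta \, | \, \tau)$ are scalars and so pass through $\rho\otimes\rho$ unchanged, while on each simple tensor $(\rho\otimes\rho)(x\otimes y)=\rho(x)\otimes\rho(y)$. It then remains only to identify $\rho(J_{a,b})$ with $I_{a,b}$. Because $\rho$ sends the generators $S,T$ to the operators $g,h$ that define $I_{a,b}=h^ag^b$, we get $\rho(J_{a,b})=\rho(T^aS^b)=\rho(T)^a\rho(S)^b=h^ag^b=I_{a,b}$, and since $\rho$ is an algebra homomorphism, $\rho(J_{a,b}^{-1})=\rho(J_{a,b})^{-1}=I_{a,b}^{-1}$. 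Substituting $(a,b)=(-k'u,v)$ into the termwise image produces exactly $\sum_{(u,v)}w_{(u,v)}(-nz,\eta \, | \, \tau)\,I_{-k'u,v}\otimes I_{-k'u,v}^{-1}=T_k(z,\eta \, | \, \tau)$, which is the assertion.

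The bulk of this is routine bookkeeping: the scalar factors match on the nose, and $\rho\otimes\rho$ is multiplicative on simple tensors. The one step I expect to be the real content---and the only possible obstacle---is the descent of $\rho\otimes\rho$ to the relative tensor product $A_n$ in the first paragraph, which rests entirely on $\epsilon^{1/2}$ being central and hence acting by a scalar in the irreducible representation $V$. Once that observation is in place, the Lemma follows by direct substitution of the definitions of $J_{a,b}$, $I_{a,b}$, $L_k$, and $T_k$.
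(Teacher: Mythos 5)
Your overall route is the same as the paper's: the paper states this lemma without proof, treating it as a purely definitional unwinding, and your two ingredients---the descent of $\rho\otimes\rho$ from $\CC\widetilde{H}_n\otimes_{\CC}\CC\widetilde{H}_n$ to $A_n$ (valid because the central element $\epsilon^{1/2}$ acts by a scalar on the irreducible module $V$), followed by termwise substitution---are exactly the implicit content. The descent argument is correct and worth making explicit.

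There is, however, one assertion at the crux that is false for the representation the lemma actually cites. You claim $\rho$ sends $S,T$ to $g,h$. In \Cref{prop.qnk.act} the action is $S\cdot x_i=\omega^i x_i$ and $T\cdot x_i=x_{i+1}$, whereas $h$ is defined at the start of \cref{se:main} by $h\cdot x_i=x_{i-1}$; so for that representation $\rho(S)=g$ but $\rho(T)=h^{-1}$, hence $\rho(J_{a,b})=h^{-a}g^{b}=I_{-a,b}$. Feeding this into your termwise computation yields
\begin{equation*}
(\rho\otimes\rho)\big(L_k(z,\eta \, | \, \tau)\big)\;=\;\sum_{(u,v)\in\ZZ_n^2}w_{(u,v)}(-nz,\eta \, | \, \tau)\,I_{k'u,v}\otimes I_{k'u,v}^{-1},
\end{equation*}
which is \emph{not} equal to $T_k(z,\eta \, | \, \tau)$: the operators $I_{a,b}\otimes I_{a,b}^{-1}$, $(a,b)\in\ZZ_n^2$, are linearly independent in $\End(V^{\otimes 2})$ (each is a nonzero scalar multiple of $I_{a,b}\otimes I_{-a,-b}$, and the $I_{a,b}$ form a basis of $\End(V)$), so equality would force $w_{(u,v)}=w_{(-u,v)}$ for all $(u,v)$, which fails because these functions have different zero sets by \Cref{prop.wuv.properties}\cref{eq.w.mod}. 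The identity is true, on the nose and by exactly your computation, for the representation $S\mapsto g$, $T\mapsto h$ introduced at the beginning of \cref{se:main}, suitably extended to $\widetilde{H}_n$; note that this is genuinely a different representation from that of \Cref{prop.qnk.act}---they even have different central characters, $\epsilon$ acting by $\omega$ in the latter and by $\omega^{-1}$ in the former. In other words, the discrepancy is an internal inconsistency of the paper that your proof silently inherits: the sentence identifying $\rho(T)$ with $h$ is where the issue hides, and it should be resolved by naming the $g,h$ representation (and checking the generator images) rather than by citing \Cref{prop.qnk.act}.
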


The main result in this section, \Cref{th:lequiv}, determines the ``equivariance'' properties of the map
\begin{equation} 
\label{eq:l}
L_k:  \bC\times\bC\times \bH \, \longrightarrow \, A_n, \qquad (z,\eta \, | \, \tau) \, \mapsto  \, L_k(z,\eta \, | \, \tau) 
\end{equation}
with respect to the actions of $\SL(2,\ZZ)$ on its domain and co-domain. 
To state it we introduce the matrix
\begin{equation}
\label{eq:defn.matrix.D}
D \; :=\;   
  \begin{pmatrix}
    -k&0\\
    0&1
  \end{pmatrix} 
\,  \in \, \GL(2,\bR)
\end{equation}
and, for $M\in\SL(2,\bZ)$, define
\begin{equation*}
M' \; : = \;  D^{-1}M^{-t}D.
\end{equation*}
Note that $D^{-t}=D^{-1}$ and $M''=(M')'=M$.

\begin{theorem}\label{th:lequiv}
Fix the data $(n,k,\eta,\tau)$ and $M \in  \SL(2,\ZZ)$. There is a nowhere-vanishing holomorphic function $f(z)$ such that 
  \begin{equation}\label{eq:twols}
 L_k\big( M \triangleright (z,\eta \, | \, \tau)\big) \; = \;   f(z) \, M' \triangleright L_k(z,\eta \, | \, \tau ),
  \end{equation}
where $M' \triangleright$  denotes  the action of $M' \in \SL(2,\ZZ)$ on $A_n$. 
\end{theorem}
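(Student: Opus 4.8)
The plan is to reduce the operator identity \cref{eq:twols} to the scalar equivariance of the functions $w_{(u,v)}$ proved in \Cref{th:wuvs}, and then to transport that equivariance through both tensor factors by means of the clean action formula \cref{eq:important} on $A_n$. Everything comes down to tracking how the index $(u,v)$ is moved by the three operations in play: the modular action \cref{eq:actredux} on $(z,\eta\,|\,\tau)$, a reindexing of the summation, and the action $M'\triangleright$ on the subscripts of the $J_{(a,b)}$'s. All index computations below are understood modulo $n$, and $M'$ is understood through its reduction mod $n$.

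First I would note that since \cref{eq:actredux} merely rescales the $z$-slot by $(c\tau+d)^{-1}$, evaluating $L_k$ at $M\triangleright(z,\eta\,|\,\tau)$ amounts to evaluating each summand $w_{(u,v)}$ at $M\triangleright(-nz,\eta\,|\,\tau)$, because $-nz'=(-nz)/(c\tau+d)$. Applying \Cref{th:wuvs} with argument $-nz$ then gives
\begin{equation*}
w_{(u,v)}\big(M\triangleright(-nz,\eta\,|\,\tau)\big)\;=\;f_M(-nz)^{-1}\,w_{(u,v)M}(-nz,\eta\,|\,\tau),
\end{equation*}
and, crucially, $f_M$ does not depend on $(u,v)$, so it factors out of the whole sum defining $L_k\big(M\triangleright(z,\eta\,|\,\tau)\big)$. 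This already produces the prefactor $f(z):=f_M(-nz)^{-1}$, which is nowhere-vanishing and holomorphic since $f_M$ is.

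Next I would reindex the remaining sum $\sum_{(u,v)}w_{(u,v)M}(-nz)\,J_{-k'u,v}\otimes J_{-k'u,v}^{-1}$ by the substitution $(u,v)\mapsto(u,v)M^{-1}$, a bijection of $\bZ_n^2$ because $M$ is invertible modulo $n$. This turns the coefficient into $w_{(u,v)}(-nz)$, and turns the subscript $(-k'u,v)$—which equals $(u,v)D^{-1}$ since $D^{-1}\equiv\mathrm{diag}(-k',1)$ by $kk'\equiv 1\bmod n$—into $(u,v)M^{-1}D^{-1}$. On the other side, \cref{eq:important} gives $M'\triangleright\big(J_{(a,b)}\otimes J_{(a,b)}^{-1}\big)=J_{(a,b)(M')^t}\otimes J_{(a,b)(M')^t}^{-1}$ with $(a,b)=(u,v)D^{-1}$, so the subscript appearing in $M'\triangleright L_k(z,\eta\,|\,\tau)$ is $(u,v)D^{-1}(M')^t$. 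Using $D^t=D$ one computes $(M')^t=(D^{-1}M^{-t}D)^t=DM^{-1}D^{-1}$, whence $D^{-1}(M')^t=M^{-1}D^{-1}$, so this subscript is again $(u,v)M^{-1}D^{-1}$. The two sums therefore agree term by term, and \cref{eq:twols} follows with $f(z)=f_M(-nz)^{-1}$.

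The computational heart—and the only place where a sign or transpose is easy to misplace—is this index bookkeeping: one must check that the left reindexing $(u,v)\mapsto(u,v)M^{-1}$ and the right action of $M'$ on the subscripts produce the \emph{same} map $(u,v)\mapsto(u,v)M^{-1}D^{-1}$, and this is exactly what forces the definition $M'=D^{-1}M^{-t}D$ with the conjugating matrix $D$ of \cref{eq:defn.matrix.D}. The transpose enters because $\SL(2,\ZZ)$ moves the subscripts of $J_{(a,b)}$ on the right via $M^t$ (the identity preceding \cref{eq:important}), whereas \Cref{th:wuvs} moves the subscript of $w_{(u,v)}$ on the right via $M$; the matrix $D$ is precisely what absorbs the $k'$-twist built into the subscripts $-k'u$ of $L_k$ in \cref{eq:linbn}. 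Once these three transport rules are aligned there is no further analytic input needed beyond \Cref{th:wuvs}, and the central factors cause no trouble since we work in $A_n$, where they cancel by \cref{eq:important}.
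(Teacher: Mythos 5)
Your proof is correct and follows essentially the same route as the paper's: both reduce \Cref{eq:twols} to \Cref{th:wuvs} plus \Cref{eq:important}, with a reindexing of the sum over $\ZZ_n^2$ and the matrix identity relating $D$, $M^{-t}$, and $M'$ doing the bookkeeping. The only cosmetic difference is that the paper invokes $M''=M$ and runs the computation starting from $M\triangleright L_k(z,\eta\,|\,\tau)$, whereas you start from $L_k\big(M\triangleright(z,\eta\,|\,\tau)\big)$ and verify the two sides agree term by term via $(M')^t=DM^{-1}D^{-1}$ --- the same calculation read in the opposite direction.
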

\begin{proof}
Note that $(-k'u,v) =  (u,v)D^{-1}$ for all $(u,v) \in \ZZ_n^2$.

Since $M''=M$,  \Cref{eq:twols}  is equivalent to the statement that there is a nowhere-vanishing holomorphic function 
$f(z)$ such that 
\begin{equation*}
L_k\big( M' \triangleright (z,\eta \, | \, \tau)\big) = f(z) \,  M \triangleright L_k(z,\eta \, | \, \tau).
\end{equation*} 
That is what we will prove.

 By the definition of $L_k(z,\eta \, | \, \tau)$ in \Cref{eq:linbn},  
  \begin{align*}
    M\triangleright L_k(z,\eta \, | \, \tau) & \; = \; 
    \sum_{u,v}w_{(u,v)}(-nz,\eta \, | \, \tau) \, J_{(u,v)D^{-1}M^t}\otimes J_{(u,v)D^{-1}M^t}^{-1}
    \\
                                     &\; = \; \sum_{u,v} w_{ (u,v)M^{-t}D}(-nz,\eta \, | \, \tau)\, J_{u,v}\otimes J_{u,v}^{-1}
    \\
                                     &\;=\;\sum_{u,v} w_{(u,v)D(D^{-1}M^{-t}D) }(-nz,\eta \, | \, \tau) \, J_{u,v}\otimes J_{u,v}^{-1}.
  \end{align*}
  (The first equality in the previous computation uses the fact that $L_k(z,\eta \, | \, \tau)$ is, by definition, an element of $A_n$,
  not an element of $\CC \widetilde{H}_n \otimes_\CC \CC \widetilde{H}_n $.)
  By \Cref{th:wuvs}, there is a nowhere-vanishing holomorphic function $g(z)$ such that this equals
  \begin{equation*}
  g(z) \,   \sum_{u,v} w_{(u,v)D}\left(D^{-1}M^{-t}D \triangleright(-nz,\eta \, | \, \tau)\right) J_{u,v}\otimes J_{u,v}^{-1},
  \end{equation*}
  which is, as we see after re-indexing the summation over $(u,v)$, the same thing  as
  \begin{equation*}
g(z) \,    \sum_{u,v} w_{(u,v)}\left(M' \triangleright(-nz,\eta \, | \, \tau)\right) J_{-k'u,v}\otimes J_{-k'u,v}^{-1}
   \; = \;
g(z) \,    L\left(M' \triangleright(z,\eta \, | \, \tau)\right).
  \end{equation*}
  This finishes the proof.
\end{proof}

If $M ={{a \; \,b} \choose {c \, \; d}} \in \SL(2,\bZ)$, then the left-hand side of \Cref{eq:twols} is
\begin{equation*}
 L_k\bigg( \frac{z}{c\tau+d}\, , \, \frac{\eta}{c\tau+d}  \; \bigg \vert \;  \frac{a\tau+b}{c\tau+d}  \bigg)  .
\end{equation*}

%%%%%%%%%%%%%%%%%%%%%%%%%%%%%%%%%%%%%%%%%%%%%%%%%%%%%%%%%%%%%%%%%%%%%%%%%%%%%
\subsection{R-matrix and elliptic-algebra equivariance, and proof of the main theorem}\label{subse:stat}

We now translate \Cref{eq:twols}, which is a statement about how $L_k(z,\eta \, | \, \tau)$  transforms under the 
action of $\SL(2,\ZZ)$, into a statement about how $R_{n,k}(z,\eta \, | \, \tau)$ transforms under the 
action of $\SL(2,\ZZ)$. To do that we make use of the set map $\psi:\SL(2,\ZZ) \to \GL(V)$  in \Cref{cor.psi}.

\begin{theorem}\label{th:requiv}
Fix the data $(n,k,\eta,\tau)$. 
Let $M \in \SL(2,\ZZ)$ and let $\psi(M')$ be the automorphism of $V$ identified in 
\Cref{cor.psi}.  There is a nowhere-vanishing holomorphic function $f(z)$ such that
\begin{equation*}
 R_{n,k}\big(M \triangleright (z,\eta \, | \, \tau) \big) \; = \; 
  f(z) \, \psi(M')^{\otimes 2}  \cdot R_{n,k}(z,\eta \, | \, \tau) \cdot  \big( \psi(M')^{\otimes 2}\big)^{-1}.
   \end{equation*}
\end{theorem}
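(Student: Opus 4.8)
The plan is to deduce this statement from the $L_k$-equivariance already established in \Cref{th:lequiv}, by transporting that result through the representation $\rho\otimes\rho$ and the dictionary between $R_{n,k}$, $T_k$, and $L_k$ recorded in \cref{eq:tisr} and in the lemma asserting $T_k(z,\eta \, | \, \tau)=(\rho\otimes\rho)\big(L_k(z,\eta \, | \, \tau)\big)$. In effect all of the analytic work lives in \Cref{th:lequiv}; the present statement is a purely algebraic translation of it, so I expect no genuine obstacle, only careful bookkeeping.

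First I would apply $\rho\otimes\rho$ to both sides of the identity in \Cref{th:lequiv}, $L_k\big(M\triangleright(z,\eta \, | \, \tau)\big)=f_0(z)\,M'\triangleright L_k(z,\eta \, | \, \tau)$, where I write $f_0(z)$ for the nowhere-vanishing holomorphic function supplied there. By the lemma, the left-hand side becomes $T_k\big(M\triangleright(z,\eta \, | \, \tau)\big)$. For the right-hand side, recall that the $\SL(2,\ZZ)$-action on $A_n$ is $M'\triangleright(x\otimes y)=(M'\triangleright x)\otimes(M'\triangleright y)$ with $M'\triangleright x=\Psi_{M'}(x)$, so the intertwining identity \cref{eq:13}, namely $\rho(M'\triangleright x)=\psi(M')\,\rho(x)\,\psi(M')^{-1}$, applied on each tensor factor gives
\[
(\rho\otimes\rho)\big(M'\triangleright(x\otimes y)\big)=\psi(M')^{\otimes2}\,(\rho\otimes\rho)(x\otimes y)\,\big(\psi(M')^{\otimes2}\big)^{-1}.
\]
Extending linearly over $A_n$ and using the lemma once more, this yields
\[
T_k\big(M\triangleright(z,\eta \, | \, \tau)\big)=f_0(z)\,\psi(M')^{\otimes2}\,T_k(z,\eta \, | \, \tau)\,\big(\psi(M')^{\otimes2}\big)^{-1}.
\]

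Finally I would pass from $T_k$ back to $R_{n,k}$ using \cref{eq:tisr} on both sides. The key algebraic observation is that the flip $P$ commutes with $A^{\otimes2}$ for every $A\in\GL(V)$ — indeed $P\,(A\otimes A)(x\otimes y)=Ay\otimes Ax=(A\otimes A)\,P(x\otimes y)$ — so in particular $P$ commutes with $\psi(M')^{\otimes2}$, and conjugation by $\psi(M')^{\otimes2}$ passes through the $P$ in $PT_k$ unharmed. Writing $(z',\eta' \, | \, \tau')=M\triangleright(z,\eta \, | \, \tau)$ with $z'=z/(c\tau+d)$, I substitute $R_{n,k}\big(M\triangleright(z,\eta \, | \, \tau)\big)=\tfrac1n e\!\big(-\tfrac12 n(n+1)z'\big)\,P\,T_k\big(M\triangleright(z,\eta \, | \, \tau)\big)$ on the left and the inverse relation $P\,T_k(z,\eta \, | \, \tau)=n\,e\!\big(\tfrac12 n(n+1)z\big)\,R_{n,k}(z,\eta \, | \, \tau)$ on the right. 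The factors of $\tfrac1n$ and $n$ cancel, and the two exponential prefactors, evaluated at $z'$ and at $z$, combine with $f_0(z)$ into a single nowhere-vanishing holomorphic function $f(z)=e\!\big(-\tfrac12 n(n+1)z'\big)\,e\!\big(\tfrac12 n(n+1)z\big)\,f_0(z)$, giving exactly the claimed identity.

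I do not anticipate a serious difficulty, since the argument is an assembly of \Cref{th:lequiv}, \cref{eq:13}, and \cref{eq:tisr}. The points requiring care are the commutation $P\,\psi(M')^{\otimes2}=\psi(M')^{\otimes2}\,P$, keeping the conjugating matrix $M'=D^{-1}M^{-t}D$ (rather than $M$) consistent with the $M'$ that appears in \Cref{th:lequiv}, and verifying that the scalar $e(\cdot)$-factors evaluated at the two distinct arguments $z$ and $z/(c\tau+d)$ really do multiply to a nowhere-vanishing holomorphic function rather than spoiling holomorphy. All three are routine once the commutation relation is in hand.
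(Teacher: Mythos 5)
Your proposal is correct and follows essentially the same route as the paper's own proof: apply $\rho\otimes\rho$ to the identity of \Cref{th:lequiv}, use the lemma $T_k=(\rho\otimes\rho)(L_k)$ together with \cref{eq:13} to turn the $M'\triangleright$ action into conjugation by $\psi(M')^{\otimes 2}$, and then translate back to $R_{n,k}$ via \cref{eq:tisr}. If anything, you are more explicit than the paper on two points it leaves implicit --- the commutation $P\,\psi(M')^{\otimes 2}=\psi(M')^{\otimes 2}\,P$ and the absorption of the exponential prefactors evaluated at $z$ and at $z/(c\tau+d)$ into a single nowhere-vanishing holomorphic factor --- both of which you verify correctly.
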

\begin{proof}
Let $\rho: \bC\widetilde{H}_n\to \End_\CC(V)$ denote the representation in \Cref{prop.qnk.act}.

Since there is a nowhere-vanishing holomorphic function $f(z)$ such that 
\begin{equation*}
R_{n,k}(z, \eta \, | \, \tau) \; = \; f(z) \, PT_{k}(z,\eta \, | \, \tau) \;=\; f(z) \, P (\rho \otimes \rho) (L_k(z,\eta \, | \, \tau)),
\end{equation*}
it suffices to show that 
\begin{equation*}
(\rho \otimes \rho)\big(L_k(M \triangleright (z,\eta \, | \, \tau))\big) \;=\;
g(z) \, \psi(M')^{\otimes 2} \cdot  (\rho \otimes \rho)\big(L_k(z,\eta \, | \, \tau)\big)  \cdot  \big( \psi(M')^{\otimes 2}\big)^{-1}
\end{equation*}
for some nowhere-vanishing holomorphic function $g(z)$.  However, it follows from \Cref{th:lequiv} that there is a nowhere-vanishing 
holomorphic function $g(z)$ such that 
\begin{align*}
(\rho \otimes \rho)\big(L_k(M \triangleright (z,\eta \, | \, \tau))\big)  & \;  = \; g(z) \, 
(\rho \otimes \rho) \big(M' \triangleright   L_k(z ,\eta \, | \, \tau ) \big),
\end{align*}
which is equal to 
\begin{equation*}
g(z) \, \psi(M')^{\otimes 2}  \cdot (\rho \otimes \rho) \big( L_k (z ,\eta \, | \, \tau ) \big) \cdot  \big( \psi(M')^{\otimes 2}\big)^{-1}
\end{equation*}
because, by \Cref{eq:13}, $\rho(M' \triangleright x) = \psi(M') \rho(x) \psi(M')^{-1}$ for all $x \in \widetilde{H}_n$, 
and hence for all $x \in \CC \widetilde{H}_n$. The proof is complete.
\end{proof}

\begin{theorem}[\Cref{thm.main}]\label{th:qequiv}
Let $M \in \SL(2,\ZZ)$. The linear isomorphism $\psi(M'):V \to V$ identified in \Cref{cor.psi}
 extends to a graded $\CC$-algebra isomorphism
  \begin{equation}\label{eq:12}
 \psi(M'): Q_{n,k}(\eta \, | \, \tau) \, \stackrel{\sim}{\longrightarrow} \, Q_{n,k}\big(M \triangleright (\eta \, | \, \tau) \big).
  \end{equation}  
  In particular, if  $M ={{a \; \,b} \choose {c \, \; d}} \in \SL(2,\bZ)$, then  
\begin{equation}
\label{eq:Q.isom}
Q_{n,k}\left(  \frac{\eta}{c\tau+d}  \, \bigg \vert \,  \frac{a\tau+b}{c\tau+d}  \right)  \; \cong \; Q_{n,k}(\eta \, | \, \tau).
\end{equation}
\end{theorem}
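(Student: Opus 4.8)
The plan is to read \Cref{eq:12} off directly from the functional equation for the $R$-matrix established in \Cref{th:requiv}: that equation already carries all of the modular content, so the only remaining work is to specialize the spectral parameter to $z=\eta$ and transport the resulting identity to the space of defining relations.

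I would begin with the formal mechanism. Any $\psi\in\GL(V)$ extends uniquely to a graded $\CC$-algebra automorphism of the tensor algebra $TV$ acting as $\psi^{\otimes m}$ in degree $m$; hence for a subspace $W\subseteq V^{\otimes 2}$ it carries the two-sided ideal $(W)$ onto $(\psi^{\otimes 2}(W))$ and descends to a graded isomorphism $TV/(W)\xrightarrow{\sim}TV/(\psi^{\otimes 2}(W))$. By \Cref{defn.Q.eta.tau} we have $Q_{n,k}(\eta\,|\,\tau)=TV/(\operatorname{im}R_{n,k}(\eta,\eta\,|\,\tau))$, so, writing $(\eta'\,|\,\tau'):=M\triangleright(\eta\,|\,\tau)$, it suffices to establish the single equality of subspaces of $V^{\otimes 2}$,
\[
\psi(M')^{\otimes 2}\big(\operatorname{im}R_{n,k}(\eta,\eta\,|\,\tau)\big)\;=\;\operatorname{im}R_{n,k}(\eta',\eta'\,|\,\tau').
\]

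The heart of the argument is the specialization $z=\eta$ in \Cref{th:requiv}. For $\eta\notin\tfrac1n\Lambda_\tau$ that identity holds for every $z$, and since $M\triangleright(\eta,\eta\,|\,\tau)=(\eta',\eta'\,|\,\tau')$, putting $z=\eta$ gives
\[
R_{n,k}(\eta',\eta'\,|\,\tau')\;=\;f(\eta)\,\psi(M')^{\otimes 2}\,R_{n,k}(\eta,\eta\,|\,\tau)\,\big(\psi(M')^{\otimes 2}\big)^{-1}
\]
with $f(\eta)\neq0$. As $f(\eta)$ is a nonzero scalar and $\psi(M')\in\GL(V)$, taking images yields the displayed subspace equality, and hence \Cref{eq:12}, for all $\eta\notin\tfrac1n\Lambda_\tau$. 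Genericity is compatible on the two sides: since $M\in\SL(2,\ZZ)$ one has $\Lambda_{\tau'}=(c\tau+d)^{-1}\Lambda_\tau$, so $\eta'\in\tfrac1n\Lambda_{\tau'}$ precisely when $\eta\in\tfrac1n\Lambda_\tau$. Finally \Cref{eq:Q.isom} is \Cref{eq:12} together with the identity $M\triangleright(\eta\,|\,\tau)=\big(\tfrac{\eta}{c\tau+d}\,\big|\,\tfrac{a\tau+b}{c\tau+d}\big)$, the relation $\cong$ being symmetric.

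The one point requiring real care---and the main obstacle---is the passage to the excluded points $\eta\in\tfrac1n\Lambda_\tau$, where $R_{n,k}(\eta,\eta\,|\,\tau)$ is defined only by the holomorphic continuation accompanying \Cref{defn.Q.eta.tau}. I would treat the displayed conjugation identity as an equality of $\End_\CC(V^{\otimes 2})$-valued functions of $\eta$: both $R_{n,k}(\eta',\eta'\,|\,\tau')$ and the conjugate $\psi(M')^{\otimes 2}R_{n,k}(\eta,\eta\,|\,\tau)(\psi(M')^{\otimes 2})^{-1}$ extend holomorphically to all of $\CC$ by that continuation property, while the scalar $f$, assembled from exponential factors and nonvanishing theta-quotient normalizations through \Cref{th:wuvs}, \Cref{th:lequiv} and \Cref{th:requiv}, extends to a nowhere-vanishing holomorphic function of $\eta$ (equivalently, it is the scalar relating two entire operator-valued functions that are generically proportional). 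By the identity theorem the conjugation identity then holds for every $\eta\in\CC$, so the subspace equality and the graded isomorphism \Cref{eq:12} persist for all $\eta$, completing the proof.
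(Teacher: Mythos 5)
Your proposal is correct and takes essentially the same route as the paper: its proof of \Cref{th:qequiv} likewise consists of specializing the functional equation of \Cref{th:requiv} on the diagonal $z=\eta$ (phrased there as the limit defining $R_{n,k}(M\triangleright(\eta,\eta\,|\,\tau))$) and observing that multiplication by the nonzero scalar $f(\eta)$ and conjugation by $\psi(M')^{\otimes 2}$ alter the image of $R_{n,k}(\eta,\eta\,|\,\tau)$ only by applying $\psi(M')^{\otimes 2}$, so the spaces of defining relations correspond. Your handling of the excluded points $\eta\in\tfrac{1}{n}\Lambda_\tau$ --- checking that the genericity conditions match on both sides and that the scalar, being built from nowhere-vanishing exponential factors, extends holomorphically and nowhere-vanishingly in $\eta$ so the identity persists under holomorphic continuation --- is in fact more explicit than the paper's proof, which compresses that issue into its limit notation.
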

\begin{proof}
By definition, $Q_{n,k}\big(M \triangleright (\eta \, | \, \tau) \big) =TV/I$ where $I$ is the ideal generated by the image of 
\begin{equation*}
R_{n,k}(M \triangleright (\eta,\eta \, | \, \tau)) \; := \; \lim_{z \to \eta} R_{n,k}(M \triangleright (z,\eta \, | \, \tau)) 
\end{equation*}
By \Cref{th:requiv}, this is the same as the image of 
$
 \psi(M')^{\otimes 2}  \cdot R_{n,k}(\eta,\eta \, | \, \tau). 
 $
The conclusion follows.
\end{proof}

\subsubsection{$\psi(M')$ is $\widetilde{H}_n$-equivariant if and only if $M$ is the identity}
If $\psi:R \to R'$ is an isomorphism of rings there is an induced isomorphism of automorphism groups
$\widehat{\psi}:\Aut(R) \to \Aut(R')$, $\a \mapsto \psi \a \psi^{-1}$. Since $\widetilde{H}_n$ acts as automorphisms of 
$Q_{n,k}(\eta \, | \, \tau)$ and $Q_{n,k}(\eta' \, | \, \tau')$  it is natural to ask whether $\widehat{\psi(M')}$ sends the copy of 
 $\widetilde{H}_n$ in $\Aut(Q_{n,k}(\eta \, | \, \tau))$ to the copy of 
 $\widetilde{H}_n$ in $\Aut(Q_{n,k}(\eta' \, | \, \tau'))$. It does: if $x \in \widetilde{H}_n$, then 
 $\psi(M') \rho(x) \psi(M')^{-1}= \rho(M' \triangleright x) $; i.e., $\widehat{\psi(M')}$
 restricts to an isomorphism between the two copies of $\widetilde{H}_n$ and that isomorphism is 
 $x \mapsto M' \triangleright x$. In particular, if $M$ is not the identity, then $\psi(M')$ is not $\widetilde{H}_n$-equivariant.

\subsubsection{}
The next result involves an equality, not just an isomorphism.

\begin{corollary}\label{cor:waspuz}
If $m \in \ZZ$, then
$
Q_{n,k}\big(\frac{\eta}{mn\tau+1}  \, \big \vert \, \frac{\tau}{mn\tau+1}\big)  \, = \, Q_{n,k}(\eta \, | \, \tau).
$
\end{corollary}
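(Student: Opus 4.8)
The plan is to realize \Cref{cor:waspuz} as the special case of \Cref{th:qequiv} in which the governing matrix reduces to the identity modulo $n$; then the isomorphism supplied by \Cref{th:qequiv} is literally the identity on generators, and that forces the two defining ideals to coincide, giving an equality rather than a mere isomorphism. First I would pin down the matrix: take
\[
M \;=\; \begin{pmatrix} 1 & 0 \\ mn & 1 \end{pmatrix} \; \in \; \SL(2,\ZZ),
\]
for which $\det M = 1$ and
\[
M \triangleright (\eta \, | \, \tau) \;=\; \left( \frac{\eta}{mn\tau+1} \; \bigg| \; \frac{\tau}{mn\tau+1} \right),
\]
so that \Cref{eq:Q.isom} already yields the isomorphism in question. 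The point is to upgrade it to an equality.

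Next I would show that the implementing map $\psi(M')$ can be taken to be $\id_V$. Since the lower-left entry $c = mn$ is divisible by $n$, we have $M \equiv I \pmod n$, hence $M^{-t} \equiv I \pmod n$; and because $\gcd(n,k) = 1$ makes $k$ invertible modulo $n$, conjugation by $D$ is defined over $\ZZ_n$ and fixes $I$, so $M' = D^{-1}M^{-t}D \equiv I \pmod n$. By the remark in \Cref{ssect.Z.vs.Zn.2}, the operator $\psi(M')$ depends only on the image of $M'$ in $\SL(2,\ZZ_n)$, which here is the identity; hence we may take $\psi(M') = \id_V$.

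Finally I would conclude. With $\psi(M') = \id_V$, the graded algebra isomorphism of \Cref{th:qequiv} is induced by the identity map $TV \to TV$; but the identity of $TV$ descends to an isomorphism $TV/I \xrightarrow{\sim} TV/I'$ of the two presentations only when $I = I'$, so the defining ideals agree and the two algebras are equal, not merely isomorphic. Equivalently, feeding $\psi(M') = \id_V$ into \Cref{th:requiv} gives $R_{n,k}\big(M \triangleright (z,\eta \, | \, \tau)\big) = f(z)\, R_{n,k}(z,\eta \, | \, \tau)$ with $f$ nowhere vanishing, so at $z = \eta$ the two $R$-matrices have the same image and hence the same space of relations. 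The only point requiring genuine care is the reduction $M' \equiv I \pmod n$ together with the justification that $\psi(M')$ may be chosen to be $\id_V$ --- this is precisely where the factor $n$ in the entry $mn$ is used; the remaining steps are a direct appeal to \Cref{th:qequiv} (or \Cref{th:requiv}) and \Cref{ssect.Z.vs.Zn.2}.
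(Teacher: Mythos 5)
Your proposal is correct and follows essentially the same route as the paper: choose $M={1\;\;0\choose mn\;1}$, observe that $M'$ reduces to the identity in $\SL(2,\ZZ_n)$ (using that $k$ is invertible mod $n$ so conjugation by $D$ makes sense there), invoke \Cref{ssect.Z.vs.Zn.2} to take $\psi(M')=\id_V$, and conclude via \Cref{th:qequiv}. Your extra remarks justifying why $\psi(M')=\id_V$ yields an equality of defining ideals (rather than a mere isomorphism) simply make explicit what the paper leaves implicit in the phrase ``whence \Cref{th:qequiv} gives the result.''
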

\begin{proof}
As we remarked in \Cref{ssect.Z.vs.Zn,ssect.Z.vs.Zn.2}, 
 the automorphisms $\Psi_M$ of $\widetilde{H}_n$  and $\psi(M)$ of $V$ defined for 
$M\in  \SL(2,\ZZ)$ only depend on the image of $M$ in $\SL(2,\ZZ_n)$; in particular, if  the image of $M'$ in $\SL(2,\ZZ_n)$
is the identity, then we can take $\psi(M')=\id_V$. If $M ={{\; 1 \;\;  \; 0} \choose {mn \,\; 1}}$, then the image of $M'$ in $\SL(2,\ZZ_n)$
is the identity so we can take $\psi(M')=\id_V$, whence \Cref{th:qequiv} gives the result.
\end{proof}

%%%%%%%%%%%%%%%%%%%%%%%%%%%%%%%%%%%%%%%%%%%%%%%%%%%%%%%%%%%%%%%%%%%%%%%%%%%%%
\subsection{The proof of \Cref{thm.main.2}}

That result is a consequence of the following lemma.

\begin{lemma}
\label{lem.var.hain}
Let $\Lambda_1$ and $\Lambda_2$ be lattices in $\bC$ and $f:\bC/\Lambda_1 \to \bC/\Lambda_2$ a morphism of algebraic
groups. 
\begin{enumerate}
\item\label{item.var.hain.ex}
There is $u \in \bC^\times$ such that $f(z+\Lambda_1) \, = \, uz + \Lambda_2$ for all $z \in \bC$.
\item\label{item.var.hain.unique}
If $f$  is an isomorphism of algebraic groups, then the $u$ in \cref{item.var.hain.ex} is unique.
\item\label{item.var.hain.sl}
Assume $\Lambda_1=\Lambda_{\tau_1}$ and $\Lambda_2=\Lambda_{\tau_2}$ for some $\tau_1,\tau_2 \in \bH$.
If $f$  is an isomorphism of algebraic groups, then there is a unique 
\begin{equation*}
\begin{pmatrix}  a    &  b  \\ c & d \end{pmatrix} \, \in \,\SL(2,\bZ)
\end{equation*}
such that $f(z+\Lambda_1) \, = \, \frac{z}{c\tau_1+d} + \Lambda_2$  for all $z \in \bC$ and $\tau_2=\frac{a\tau_1+b}{c\tau_1+d}$.  
\end{enumerate} 
\end{lemma}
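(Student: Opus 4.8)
The plan is to use the universal covering $\bC\to\bC/\Lambda_i$ to linearize $f$, and then to deduce parts \cref{item.var.hain.unique} and \cref{item.var.hain.sl} from elementary facts about lattice bases. For \cref{item.var.hain.ex} I would lift $f$ to a holomorphic map $\tilde f:\bC\to\bC$ of universal covers, normalized by $\tilde f(0)=0$; such a lift exists and is unique because $\bC$ is simply connected and $f$ preserves the identity. Since $f$ is a group homomorphism, the expression $\tilde f(z+w)-\tilde f(z)-\tilde f(w)$ lies in $\Lambda_2$ for all $z,w$; being continuous in $(z,w)$, taking values in the discrete set $\Lambda_2$, and vanishing at the origin, it is identically $0$. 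Thus $\tilde f$ is additive and holomorphic, so differentiating $\tilde f(z+w)=\tilde f(z)+\tilde f(w)$ in $z$ shows that $\tilde f'$ is constant; writing $u:=\tilde f'(0)$ gives $\tilde f(z)=uz$, i.e. $f(z+\Lambda_1)=uz+\Lambda_2$. Well-definedness on the quotient forces $u\Lambda_1\subseteq\Lambda_2$, and $u\neq 0$ because the morphisms under consideration are nonconstant (indeed isogenies), so that $u\in\bC^\times$.

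For \cref{item.var.hain.unique}, if $uz+\Lambda_2=u'z+\Lambda_2$ for all $z$, then $z\mapsto(u-u')z$ is a continuous map from the connected set $\bC$ into the discrete set $\Lambda_2$, hence constant and equal to its value $0$ at $z=0$; so $u=u'$. For \cref{item.var.hain.sl}, the inverse isomorphism is given by multiplication by $u^{-1}$, so $u^{-1}\Lambda_2\subseteq\Lambda_1$ as well, whence $u\Lambda_1=\Lambda_2$. Therefore $\{u\tau_1,u\}$ and $\{1,\tau_2\}$ are both $\bZ$-bases of $\Lambda_2$, and I would write the second in terms of the first:
\begin{equation*}
\begin{pmatrix}\tau_2\\1\end{pmatrix}=\begin{pmatrix}a&b\\c&d\end{pmatrix}\begin{pmatrix}u\tau_1\\u\end{pmatrix},\qquad \begin{pmatrix}a&b\\c&d\end{pmatrix}\in\GL(2,\bZ).
\end{equation*}
Reading off the two rows gives $1=u(c\tau_1+d)$ and $\tau_2=u(a\tau_1+b)$, i.e. $u=\frac{1}{c\tau_1+d}$ and $\tau_2=\frac{a\tau_1+b}{c\tau_1+d}$, which is exactly the asserted formula.

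The one place a hypothesis beyond formal lattice algebra is needed — and the step I expect to be the main (if mild) obstacle — is upgrading the matrix from $\GL(2,\bZ)$ to $\SL(2,\bZ)$. For this I would invoke the transformation law $\Im\big(\frac{a\tau_1+b}{c\tau_1+d}\big)=\det\!\begin{pmatrix}a&b\\c&d\end{pmatrix}\,|c\tau_1+d|^{-2}\,\Im(\tau_1)$ recorded earlier in the discussion of the action on $\bH$: since $\tau_1,\tau_2\in\bH$ both have positive imaginary part, the determinant is positive, and being $\pm1$ it must equal $+1$. Finally, uniqueness of the matrix follows from the uniqueness of $u$ in \cref{item.var.hain.unique} together with the fact that the change-of-basis matrix between two fixed ordered $\bZ$-bases of a lattice is unique. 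The only things to watch are orienting the change of basis in the direction that produces the stated formula (expressing $\{1,\tau_2\}$ in terms of $\{u\tau_1,u\}$ rather than the reverse) and using the imaginary-part identity to pin down $\SL$ rather than merely $\GL$.
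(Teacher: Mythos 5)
Your proof is correct. For parts \cref{item.var.hain.unique} and \cref{item.var.hain.sl} you follow essentially the same route as the paper: the same discreteness argument for uniqueness of $u$, the same reading-off of $\tau_2=u(a\tau_1+b)$ and $1=u(c\tau_1+d)$ from $u\Lambda_1=\Lambda_2$, the same imaginary-part determinant identity to upgrade $\GL(2,\bZ)$ to $\SL(2,\bZ)$, and a uniqueness argument that is the paper's (irrationality of $\tau_1$, i.e.\ $\bZ$-linear independence of $1,\tau_1$) phrased in change-of-basis language. The one genuine difference is part \cref{item.var.hain.ex}: the paper simply cites Hain's lemma, whereas you prove it from scratch by lifting $f$ through the universal covers, showing the cocycle $\tilde f(z+w)-\tilde f(z)-\tilde f(w)$ is a continuous $\Lambda_2$-valued function vanishing at the origin and hence zero, and then differentiating the additivity relation to get $\tilde f(z)=uz$. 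This buys self-containedness at the cost of length; it also makes visible a point the cited statement glosses over, namely that $u\in\bC^\times$ (rather than merely $u\in\bC$) requires $f$ to be nonconstant --- the zero morphism has $u=0$ --- which you correctly flag and which is harmless here since parts \cref{item.var.hain.unique} and \cref{item.var.hain.sl} only concern isomorphisms. One small abbreviation in your part \cref{item.var.hain.sl}: the claim that the inverse isomorphism is multiplication by $u^{-1}$ deserves a line (apply part \cref{item.var.hain.ex} to $f^{-1}$ to get some $u'$, then $f^{-1}\circ f=\mathrm{id}$ and part \cref{item.var.hain.unique} force $u'u=1$), but this is routine and does not affect correctness.
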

\begin{proof} 
\cref{item.var.hain.ex} This is \cite[Lem.~1.9]{Hain11}.

\cref{item.var.hain.unique}
If $u_1z + \Lambda_2=u_2z + \Lambda_2$ for all $z \in \bC$, then $(u_1-u_2)z  \in \Lambda_2$ for all $z \in \bC$, which 
implies that $u_1-u_2=0$. Hence the $u$ in \cref{item.var.hain.ex} is unique.

\cref{item.var.hain.sl}
(Existence.) 
Since $f(z+\Lambda_1) \; = \; uz + \Lambda_2$ for all $z \in \bC$, $u\Lambda_1=\Lambda_2$. 
Hence $\tau_2=u(a\tau_1+b)$ and $1=u(c\tau_1+d)$ for some $a,b,c,d\in \bZ$. Since $u\Lambda_1=\Lambda_2$,
$\{a\tau_1+b,c\tau_1+d\}$ is a $\bZ$-basis for $\Lambda_{\tau_1}$. Hence ${a \; b \choose c \; d} \in \GL(2,\bZ)$. 
Since $\tau_2=\frac {a\tau_1+b}{c\tau_1 + d}$, the imaginary part of $\tau_2$  is
\begin{equation*}
\Im(\tau_2) \;=\; \det \begin{pmatrix} a & b \\ c & d \end{pmatrix} |c\tau_1 +d|^{-2} \Im(\tau_1).
\end{equation*}
Since the imaginary parts of $\tau_1$ and $\tau_2$ are positive,  
$\det\!{a \; b \choose c \; d}>0$. Hence ${a \; b \choose c \; d}$  is in $\SL(2,\bZ)$. 

(Uniqueness.)  Since $\tau_1\notin\bQ$, the integers $c$ and $d$ in the equality $1=u(c\tau_1+d)$ are unique.  Likewise, since $\tau_1 \notin\bQ$, the integers $a$ and $b$ in the equality $a\tau_1+b=u^{-1}\tau_2$ are unique.
\end{proof}

\begin{corollary}[\Cref{thm.main.2}]
\label{cor.qnk.not}
If $f:E_{\tau} \to E_{\tau'}$ is an isomorphism of algebraic groups such that $f(\eta+\Lambda_{\tau})=\eta'+\Lambda_{\tau'}$, then 
\begin{equation*}
Q_{n,k}(\eta\,|\,\tau) \;\cong\; Q_{n,k}(\eta'\,|\,\tau').
\end{equation*}
\end{corollary}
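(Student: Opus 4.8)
The plan is to deduce this statement directly from the main isomorphism theorem \Cref{th:qequiv} together with the structural description of group isomorphisms of complex tori provided by \Cref{lem.var.hain}. The entire content of the corollary is to repackage the abstract isomorphism $f$ of elliptic curves as an explicit element of $\SL(2,\bZ)$ acting on the parameter $(\eta \, | \, \tau)$, so that \Cref{th:qequiv} becomes applicable.

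First I would apply \Cref{lem.var.hain}\cref{item.var.hain.sl} to the isomorphism $f:E_\tau \to E_{\tau'}$, taking $\tau_1=\tau$ and $\tau_2=\tau'$. This produces a matrix $M={{a \; \,b} \choose {c \, \; d}}\in\SL(2,\bZ)$ with the property that $f(z+\Lambda_\tau)=\frac{z}{c\tau+d}+\Lambda_{\tau'}$ for all $z\in\bC$ and $\tau'=\frac{a\tau+b}{c\tau+d}$. In particular the point of $\bC\times\bH$ obtained as $M\triangleright(\eta \, | \, \tau)=\big(\frac{\eta}{c\tau+d}\,\big|\,\tau'\big)$ has $\tau$-coordinate exactly $\tau'$.

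Next I would feed the hypothesis $f(\eta+\Lambda_\tau)=\eta'+\Lambda_{\tau'}$ through this formula: it becomes $\frac{\eta}{c\tau+d}+\Lambda_{\tau'}=\eta'+\Lambda_{\tau'}$, so that $\frac{\eta}{c\tau+d}$ and $\eta'$ have the same image in $E_{\tau'}$. By the elementary observation recorded in the introduction, namely that $Q_{n,k}(\,\cdot\,|\,\tau')$ depends only on the image of its first argument in $E_{\tau'}$, this yields the equality $Q_{n,k}\big(\frac{\eta}{c\tau+d}\,|\,\tau'\big)=Q_{n,k}(\eta' \, | \, \tau')$.

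Finally I would invoke \Cref{th:qequiv} for the matrix $M$, which gives $Q_{n,k}(\eta \, | \, \tau)\cong Q_{n,k}\big(M\triangleright(\eta \, | \, \tau)\big)=Q_{n,k}\big(\frac{\eta}{c\tau+d}\,|\,\tau'\big)$, and chaining this with the displayed equality produces $Q_{n,k}(\eta \, | \, \tau)\cong Q_{n,k}(\eta' \, | \, \tau')$. There is no genuinely hard step here; all of the analytic and representation-theoretic work has already been absorbed into \Cref{th:qequiv}. The only point requiring care is the bookkeeping that identifies $M\triangleright(\eta \, | \, \tau)$ with $(\eta' \, | \, \tau')$ — specifically, verifying that the base-point condition on $f$ translates into a ``same image in $E_{\tau'}$'' statement (an equality of classes), rather than forcing an equality of representatives in $\bC$, and confirming that the $\tau$-coordinate produced by the lemma coincides with $\tau'$ on the nose.
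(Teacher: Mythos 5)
Your proposal is correct and follows exactly the paper's own argument: apply \Cref{lem.var.hain}\cref{item.var.hain.sl} to extract $M\in\SL(2,\bZ)$ realizing $f$, translate the base-point hypothesis into the statement that $\frac{\eta}{c\tau+d}$ and $\eta'$ have the same image in $E_{\tau'}$, and chain \Cref{th:qequiv} with the elementary independence of $Q_{n,k}(\,\cdot\,|\,\tau')$ on the choice of representative. The bookkeeping point you flag at the end (equality of classes rather than of representatives) is precisely the observation the paper relies on in its final displayed equality.
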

\begin{proof}
By \cref{lem.var.hain}\cref{item.var.hain.sl}, there is $M={a \; b \choose c \; d}\in\SL(2,\bZ)$ such that  $f(z+\Lambda_\tau) \, = \, \frac{z}{c\tau+d} + \Lambda_{\tau'}$  for all $z \in \bC$ and $\tau'=\frac{a\tau+b}{c\tau+d}$. In particular, $\eta'+\Lambda_{\tau'}=\frac{\eta}{c\tau+d} + \Lambda_{\tau'}$. By \Cref{th:qequiv},
\begin{equation*}
Q_{n,k}(\eta\,|\,\tau)\;\cong\;Q_{n,k}\big(\tfrac{\eta}{c\tau+d}\,\big\vert\,\tfrac{a\tau+b}{c\tau+d}\big)\;=\;Q_{n,k}(\eta'\,|\,\tau')
\end{equation*}
where the last equality holds since $\frac{\eta}{c\tau+d}$ and $\eta'$ have the same image in $E_{\tau'}$.
\end{proof}

%%%%%%%%%%%%%%%%%%%%%%%%%%%%%%%%%%%%%%%%%%%%%%%%%%%%%%%%%%%%%%%%%%%%%%%%%%%%%
\subsection{Modularity}\label{subse:mod}
The action of the modular group $\SL(2,\ZZ)$ on $\CC \times \HH$ is the starting point for the theory of 
modular forms and modular functions. 
For example, 
(see, e.g.,  \cite[Defn.~1.1.1]{DS05} and \cite[\S I.3]{silv2}) a meromorphic function $f:\bH \to \CC$ is called a  
 {\sf weakly modular function of weight $k$} if 
\begin{equation}\label{eq:modf}
f\left(\frac{a\tau+b}{c\tau+d}\right) \;=\;  (c\tau+d)^{k}   f(\tau) 
\end{equation}
for all $\tau \in \HH$ and all 
\begin{equation*}  
  \begin{pmatrix}
    a&b\\
    c&d
  \end{pmatrix}
 \, \in \, \SL(2,\bZ). 
\end{equation*}
We will now restate \Cref{eq:modf} in a way that resembles the isomorphism in \Cref{eq:Q.isom}.

First, we associate to a function $f:\HH \to \CC$ its (backward) graph
\begin{equation*}
\G_f \; :=\; \{(f(\tau) \, | \, \tau) \; | \; \tau \in \HH\} \, \subseteq \CC \times \HH.
\end{equation*}
Subsets of a given set can be viewed as $\{0,1\}$-valued functions on that set. Here such a procedure  produces the function
\begin{equation*}
F:\CC \times \HH \to \{0,1\}, \qquad F(z \, | \, \tau) \; :=\; 
\begin{cases} 
1 & \text{if $f(\tau)=z$,}
\\
0 & \text{if $f(\tau) \ne z$.}
\end{cases}
\end{equation*}
With this notation,  $f$ satisfies \Cref{eq:modf} if and only if
\begin{equation*}
F \bigg( (c\tau+d)^{k} z \; \bigg \vert \; \frac{a\tau+b}{c\tau+d}  \bigg) \;=\; F(z \, | \, \tau).
\end{equation*}
In particular, $f(\tau)$ is a  weakly modular function of weight $-1$ if and only if the $\{0,1\}$-valued function  
$F(z \, | \, \tau)$ satisfies the identity
\begin{equation*}
F \bigg(\frac{z}{c\tau+d} \; \bigg \vert \;   \frac{a\tau+b}{c\tau+d}  \bigg) \;=\; F(z \, | \, \tau),
\end{equation*}
which resembles the isomorphism in \Cref{eq:Q.isom}.

If we express this identity by saying that $F$ is a  ``weakly modular function  of 
weight $-1$  taking values in $\{0,1\}$'', and extend this terminology in the obvious way, then 
\Cref{th:qequiv} says that $Q_{n,k}$ is a ``weakly modular function  of 
weight $-1$ taking values in the category of graded $\CC$-algebras''.

%%%%%%%%%%%%%%%%%%%%%%%%%%%%%%%%%%%%%%%%%%%%%%%%%%%%%%%%%%%%%%%%%%%%%%%%%%%%%%%%%%%%%%%%%%%%%%%%%%%%%%%%%%%%%%%%%%
%%%%%%%%%%%%%%%%%%%%%%%%%%%%%%%%%%%%%%%%%%%%%%%%%%%%%%%%%%%%%%%%%%%%%%%%%%%%%%%%%%%%%%%%%%%%%%%%%%%%%%%%%%%%%%%%%%

%\bibliography{biblio4}
%\bibliographystyle{customamsalpha}
%\bibliographystyle{plain}
%\addcontentsline{toc}{section}{References}

\def\cprime{$'$}
\providecommand{\bysame}{\leavevmode\hbox to3em{\hrulefill}\thinspace}
\providecommand{\MR}{\relax\ifhmode\unskip\space\fi MR }
% \MRhref is called by the amsart/book/proc definition of \MR.
\providecommand{\MRhref}[2]{%
  \href{http://www.ams.org/mathscinet-getitem?mr=#1}{#2}
}
\providecommand{\href}[2]{#2}

\end{document}